\documentclass[11pt]{article}

\usepackage{amsmath,amssymb,amsthm}
\usepackage{amscd}
\usepackage{bm}
\usepackage{graphicx}
\usepackage{ascmac}
\usepackage{BOONDOX-frak, mathrsfs}
\usepackage[top=30truemm,bottom=30truemm,left=25truemm,right=25truemm]{geometry}
\usepackage[nottoc,notlot,notlof]{tocbibind} 

\newcommand{\Z}{\mathbb{Z}}
\newcommand{\Q}{\mathbb{Q}}
\newcommand{\F}{\mathbb{F}}
\newcommand{\pe}{\mathfrak{p}}
\newcommand{\Pe}{\mathfrak{P}}
\newcommand{\tilk}{\tilde{k}}

\newcommand{\KKi}{K^{(i)}}
\newcommand{\KKip}{K^{(i+1)}}
\newcommand{\LL}{\mathscr{L}}
\newcommand{\II}{\mathcal{I}}
\newcommand{\MM}{\mathscr{M}}
\newcommand{\FF}{\mathscr{F}}

\newcommand{\BB}{\mathcal{B}}
\newcommand{\CC}{\mathcal{C}}
\newcommand{\EE}{\mathcal{E}}

\DeclareMathOperator{\Gal}{Gal}
\DeclareMathOperator{\Image}{Im}
\DeclareMathOperator{\cyc}{cyc}
\DeclareMathOperator{\Ann}{Ann}
\DeclareMathOperator{\GL}{GL}
\DeclareMathOperator{\Aut}{Aut}
\DeclareMathOperator{\End}{End}

\DeclareMathOperator{\Gr}{Gr}
\DeclareMathOperator{\Ker}{Ker}
\DeclareMathOperator{\rank}{rank}

\DeclareMathOperator{\height}{ht}
\DeclareMathOperator{\ch}{char}
\DeclareMathOperator{\ram}{ram}
\DeclareMathOperator{\sfin}{sf}
\DeclareMathOperator{\ns}{ns}

\let\oldenumerate\enumerate
\renewcommand{\enumerate}{
   \oldenumerate
   \setlength{\itemsep}{1pt}
   \setlength{\parskip}{0pt}
   \setlength{\parsep}{0pt}
}
\let\olditemize\itemize
\renewcommand{\itemize}{
   \olditemize
   \setlength{\itemsep}{1pt}
   \setlength{\parskip}{0pt}
   \setlength{\parsep}{0pt}
}

\theoremstyle{plain}
\newtheorem{thm}{Theorem}[section]
\newtheorem{lem}[thm]{Lemma}
\newtheorem{conj}[thm]{Conjecture}
\newtheorem{claim}[thm]{Claim}
\newtheorem{prop}[thm]{Proposition}
\newtheorem{cor}[thm]{Corollary}
\theoremstyle{definition}
\newtheorem{defn}[thm]{Definition}
\newtheorem{rem}[thm]{Remark}
\newtheorem{eg}[thm]{Example}
\newtheorem{ass}[thm]{Assumption}

\title{A consequence of Greenberg's generalized conjecture on Iwasawa invariants of $\Z_p$-extensions}
\author{Takenori Kataoka
\thanks{Email: tkataoka@ms.u-tokyo.ac.jp
\newline Graduate school of Mathematical Sciences, the University of Tokyo}}
\date{}

\begin{document}
\maketitle

\begin{abstract}
For a prime number $p$ and a number field $k$,
let $\tilk$ be the compositum of all $\Z_p$-extensions of $k$.
Greenberg's Generalized Conjecture (GGC) claims the pseudo-nullity of the unramified Iwasawa module $X(\tilk)$ of $\tilk$.
It is known that, when $k$ is an imaginary quadratic field,
GGC has a consequence on the Iwasawa invariants associated to $\Z_p$-extensions of $k$.
In this paper, we partially generalize it to arbitrary number fields $k$.
\end{abstract}



\section{Introduction}\label{sec_introduction}

Let $p$ be a fixed prime number.
We fix an algebraic closure of the field $\Q$ of rational numbers
and any algebraic extension of $\Q$ is considered to be contained in it.

First we introduce some general notions in Iwasawa theory.
For any algebraic extension $F$ of $\Q$, 
let $L(F)$ be the maximal unramified pro-$p$ abelian extension of $F$
and let $X(F)$ be the Galois group $\Gal(L(F)/F)$.
When $k$ is a number field (i.e. a finite extension of $\Q$),
it is known by class field theory that $X(k)$ is canonically isomorphic to 
the $p$-Sylow subgroup of the ideal class group of $k$.
The structure of $X(F)$ is one of the main objects of study in number theory.

Let $k$ be a number field and $d$ a positive integer.
When $K/k$ is a $\Z_p^d$-extension,
let $\Lambda(K/k)$ be the completed group ring $\Z_p[[\Gal(K/k)]]$, which is often called the Iwasawa algebra.
It is known that $\Lambda(K/k)$ is non-canonically isomorphic to the ring of formal power series $\Z_p[[T_1, \dots, T_d]]$ and, in particular, $\Lambda(K/k)$ is a regular local ring.
In fact, if $\sigma_1 , \dots, \sigma_d$ constitute a $\Z_p$-basis of $\Gal(K/k)$, 
then an isomorphism $\Lambda(K/k) \overset{\sim}{\to} \Z_p[[T_1, \dots, T_d]]$ is obtained by sending $\sigma_i$ to $1+T_i$.
Since $L(K)/k$ is a Galois extension, we have the natural action of $\Gal(K/k)$ on $X(K)$ via the inner automorphisms.
This action defines the natural $\Lambda(K/k)$-module structure on $X(K)$.
It is known that $X(K)$ is a finitely generated torsion $\Lambda(K/k)$-module.
(See \cite{green2}. 
Although the statement there is the case where $K = \tilk$ defined below, one can modify the proof to arbitrary multiple $\Z_p$-extensions.)

In particular, for any number field $k$, let $\tilk$ be the compositum of all $\Z_p$-extensions of $k$.
It is known that $\tilk / k$ is a $\Z_p^{r_2(k) + 1 +  \delta(k,p)}$-extension,
where $r_2(k)$ is the number of complex places of $k$ and $\delta(k,p)$ is the Leopoldt's defect of $(k,p)$
(see \cite[Proposition (10.3.20)]{NSW}).
We put $d(k) = r_2(k) + 1 +  \delta(k, p)$, so $\tilk/k$ is a $\Z_p^{d(k)}$-extension.

We also need some ring theoretic materials 
\cite[Chapter V, \S 1]{NSW}.
In general, let $\Lambda$ be a noetherian integrally closed domain and $X$ a $\Lambda$-module. 
We say that $X$ is a {\it pseudo-null} $\Lambda$-module and write $X \sim 0$ or more precisely $X \sim_{\Lambda} 0$
if $X$ is finitely generated and 
the height of the annihilator ideal of $X$ is greater than or equal to $2$.
A homomorphism $X \to Y$ of $\Lambda$-modules is said to be a pseudo-isomorphism if its kernel and cokernel are both pseudo-null.
If there exists a pseudo-isomorphism $X \to Y$, we write $X \sim Y$ or $X \sim_{\Lambda} Y$.

Now Greenberg's Generalized Conjecture (GGC) claims the following.

\begin{conj}[{\cite[Conjecture 3.5]{green1}}]\label{conj_greenberg}
For any number field $k$, $X(\tilk)$ is pseudo-null as a $\Lambda(\tilk/k)$-module.
\end{conj}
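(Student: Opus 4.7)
The final statement is Greenberg's Generalized Conjecture itself, which remains open in general; the strategy I would pursue is therefore an attempt to reduce to cases where more structure is available, rather than a proof I believe to be complete.

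My plan is to first pass to the cyclotomic extension $k' = k(\mu_{p^\infty})$, over which Kummer theory provides a useful pairing between the unramified Iwasawa module and a module of units. Concretely, the goal is to identify a suitable Tate twist of $X(\widetilde{k'})$ with a quotient of the inverse limit of local units modulo the closure of the image of global units, set up so that pseudo-nullity of the left-hand side is equivalent to a multi-variable nondegeneracy statement for the $p$-adic regulator on the right. The $\Lambda$-module structure of the local units at places above $p$ is accessible via Coleman power series and Iwasawa's fundamental exact sequence, so the content of the problem is concentrated in the image of global units.

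Once such a statement is obtained over $\widetilde{k'}$, I would descend to $\tilk$ using the inflation--restriction sequence for the finite (and, if $p$ is odd, prime-to-$p$) group $\Gal(\widetilde{k'}/\tilk\cdot k')$. Pseudo-nullity is preserved, up to pseudo-isomorphism, under taking (co)invariants by a finite prime-to-$p$ group, so the descent is formal; the only delicate issue is the slight mismatch between $\tilk\cdot k'$ and $\widetilde{k'}$ at the level of Iwasawa algebras, which can be handled by a codimension count using the fact that the kernel $\Gal(\widetilde{k'}/\tilk\cdot k')$ is pro-cyclic and does not inflate the height of the annihilator.

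The main obstacle, and the reason this is a conjecture rather than a theorem, is the global-units step. Leopoldt's conjecture pins down the $\Lambda$-rank of the global units inside the local units, but pseudo-nullity demands control of the \emph{annihilator} up to codimension-two primes, a much stronger nondegeneracy. No current form of Leopoldt's conjecture or of $p$-adic transcendence theory supplies the several-variable nondegeneracy needed, and this is precisely where the argument fails in general; the partial results known for imaginary quadratic $k$ succeed only because complex multiplication furnishes extra operators that force the required height bound. Accordingly, any realistic output of this plan would be a statement conditional on, or equivalent to, an appropriate higher-dimensional refinement of Leopoldt, rather than an unconditional proof.
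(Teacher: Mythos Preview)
The statement you were asked to prove is a \emph{conjecture}, not a theorem: the paper states Greenberg's Generalized Conjecture and then studies its consequences, but does not attempt to prove it. Indeed, immediately after stating it the paper writes ``Although GGC is still an open problem, there are some cases where GGC is known to be true,'' and every subsequent result either assumes GGC as a hypothesis or is logically independent of it. So there is no proof in the paper to compare your proposal against.

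You clearly recognize this (``which remains open in general''), and your sketch is a sensible high-level account of why the problem is hard: reduce to the cyclotomic tower, use Kummer duality to trade $X$ for a unit module, and observe that the obstruction is a multi-variable nondegeneracy of the $p$-adic regulator that current transcendence methods cannot supply. That diagnosis is accurate, but it is a research outline rather than a proof, and you say as much. The only substantive caution I would add is that your descent step is more delicate than you indicate: the extension $\widetilde{k'}/\tilk$ is not in general finite or of prime-to-$p$ degree (already $k'/k$ may have $p$-power degree, and $\widetilde{k'}$ can be strictly larger than $\tilk\cdot k'$ by an amount controlled by Leopoldt defects), so ``pseudo-nullity is preserved under (co)invariants by a finite prime-to-$p$ group'' does not apply directly. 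In any case, since the paper offers no proof of this conjecture, no comparison is possible.
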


We say that GGC holds for $(k,p)$ if $X(\tilk)$ is pseudo-null as a $\Lambda(\tilk/k)$-module.
Although GGC is still an open problem, there are some cases where GGC is known to be true.
For example, GGC holds for $(k, p)$ if $k$ is an imaginary quadratic field and $p$ does not divide the class number of $k$ (\cite[Proposition 3.A]{minardi}).
Moreover, there is a sufficient condition for GGC to hold in the case where $k$ is a CM-field and $p$ splits completely in $k/\Q$ (\cite[Theorem 2]{fujii}).

In this paper we focus on some consequences of GGC on the size of $X(K)$ for (multiple) $\Z_p$-extensions $K$ of $k$.
To state the main result, recall the definitions of the Iwasawa $\lambda, \mu, \nu$-invariants of a $\Z_p$-extension $K/k$.
Let $k_n$ be the $n$-th layer of $K/k$, in other words, 
the intermediate field of $K/k$ such that $\Gal(K/k_n) = \Gal(K/k)^{p^n}$.
Then there are unique non-negative integers $\lambda(K/k), \mu(K/k)$ and an integer $\nu(K/k)$ 
such that 
\[
\sharp X(k_n) = p^{\lambda(K/k)n + \mu(K/k)p^n + \nu(K/k)}
\]
for sufficiently large $n$ (see \cite[Theorem 13.13]{washington}).
In the case where $k$ is an imaginary quadratic field, the following theorem is known.

\begin{thm}[{\cite[Theorem 2]{ozaki}}]\label{thm_ozaki}
Let $k$ be an imaginary quadratic field.
Put $s = 1$ if $p$ splits in $k$ and $s = 0$ otherwise.
Suppose GGC holds for $(k, p)$.
Then for all but finitely many $\Z_p$-extension $K$ of $k$,
if one of the primes of $k$ above $p$ does not split in $K/k$, then $\mu(K/k) = 0$ and $\lambda(K/k) = s$.
\end{thm}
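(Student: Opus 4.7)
The plan is to exploit the small dimension of $\Lambda(\tilk/k)$ forced by $k$ being imaginary quadratic. For such $k$, $r_2(k) = 1$ and Leopoldt's conjecture is trivial, so $d(k) = 2$ and $\tilk/k$ is a $\Z_p^2$-extension. Write $\Gamma = \Gal(\tilk/k)$ and $\Lambda = \Lambda(\tilk/k)$, a three-dimensional regular local ring. Every $\Z_p$-extension $K$ of $k$ lies in $\tilk$ and corresponds to a closed subgroup $H = \Gal(\tilk/K) \cong \Z_p$, equivalently to the height-$1$ prime $\pe_H = (T_H) \subset \Lambda$ with $T_H := \gamma_H - 1$ for a topological generator $\gamma_H$ of $H$. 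This family is infinite.

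The key technical step is a control exact sequence of $\Lambda(K/k)$-modules of the form
\[
\bigoplus_{\pe \mid p} C_\pe(K) \longrightarrow X(\tilk)_H \longrightarrow X(K) \longrightarrow D(K) \longrightarrow 0,
\]
where $X(\tilk)_H := X(\tilk)/T_H X(\tilk)$, each $C_\pe(K)$ is built from the inertia subgroup $I_\pe \subset \Gamma$ and its image in $\Gamma/H$, and $D(K)$ is a cokernel coming from the inclusion $\tilk \cdot L(K) \subset L(\tilk)$. Such sequences are produced via global class field theory applied to the maximal pro-$p$ extension of $\tilk$ (respectively of $K$) unramified outside $p$, in the multivariable style of Greenberg--Minardi. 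Now GGC forces $\Ann_\Lambda(X(\tilk))$ to have height $\geq 2$ in $\Lambda$; in particular $(p)$ is not in its support, so there is $f \in \Ann_\Lambda(X(\tilk))$ with $f \notin (p)$. The height-$1$ primes $(T_H)$ for which $\bar{f} \in (p) \subset \Lambda(K/k) \cong \Z_p[[T]]$ correspond to the finitely many prime divisors of the nonzero element $f \bmod p \in \F_p[[T_1,T_2]]$. For all other $K$, $\bar{f}$ is a non-$p$ annihilator of $X(\tilk)_H$, forcing it to be a finitely generated $\Z_p$-module; combined with the finiteness of the local terms as $\Z_p$-modules, this yields $\mu(K/k) = 0$.

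The $\lambda$-invariant is then a rank chase in the control sequence. Pseudo-nullity gives $\rank_{\Z_p} X(\tilk)_H = 0$ for generic $H$, so the $\Z_p$-rank of $X(K)$ is determined by the local terms. Under the hypothesis that some $\pe \mid p$ does not split in $K/k$, i.e.\ the decomposition group $D_\pe \subset \Gamma$ surjects onto $\Gamma/H$, a direct local analysis using $\rank_{\Z_p} I_\pe = 1$ in the split case and $\rank_{\Z_p} I_\pe = 2$ in the inert or ramified case yields $\lambda(K/k) = s$. The main obstacle is pinning down the precise form of the control sequence---in particular $D(K)$ must be shown to vanish, or at least to have trivial $\Z_p$-rank, for all but finitely many $K$---and the explicit rank computation at the $C_\pe$'s; once these are in place, the finitely many exceptional $K$ are exactly those corresponding to prime divisors of $f \bmod p$, degenerate inertia or decomposition alignments, and exceptional behavior of the cokernel $D(K)$.
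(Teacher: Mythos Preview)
This statement is not proved in the paper; it is quoted from \cite{ozaki}. The paper's own contribution (Theorem~\ref{thm_main}) generalizes it but with a weaker conclusion (``generic'' rather than ``all but finitely many'') and under the stronger hypothesis $K \in \EE_{\ns}(k)$; the remark following Theorem~\ref{thm_main} explicitly notes that the sharper statement is not recovered.

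Your outline follows Ozaki's strategy and overlaps with the descent in Section~\ref{sec_descent}, but two points need correction. The cokernel $D(K)$ of the map $X(\tilk)_H \to X(K)$ is $\Gal(\tilk \cap L(K)/K)$, whose $\Z_p$-rank is exactly $s(K/k)$ by Proposition~\ref{prop_almost_ram}(2); it is the \emph{source} of the value $\lambda = s$, not an error term to be shown trivial. The rank bookkeeping therefore reduces to showing that the image of $X(\tilk)_H$ in $X(K)$ has $\Z_p$-rank $0$ for all but finitely many $H$.

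That last claim is the real gap. Your $f \notin (p)$ argument correctly gives $\mu = 0$ for all but finitely many $K$, but pseudo-nullity alone does not force $\rank_{\Z_p} X(\tilk)_H = 0$ away from finitely many $H$: if $X(\tilk)$ has a height-$2$ associated prime $P$ containing $(\sigma_1^{p^N}-1,\sigma_2^{p^N}-1)$ for some $N$, then $T_H \in P$ holds for an entire congruence class of $H$'s, not a finite set (this is precisely the exceptional clause in Lemma~\ref{lem_module_3}). The paper sidesteps this by passing to $\Lambda^{\dagger}(K/k)$, where such primes become harmless, and then recovering the rank via arithmetic semi-simplicity (Lemma~\ref{lem_jaulent} and Theorem~\ref{thm_characteristic}); Ozaki's sharper ``all but finitely many'' in the quadratic case requires an additional argument that your sketch does not supply.
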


The main theorem of this paper is Theorem \ref{thm_main}, which gives a partial generalization of Theorem \ref{thm_ozaki} for arbitrary number field $k$.
It is known that the set $\EE(k)$ of all $\Z_p$-extensions of $k$ is equipped with a compact Hausdorff topology (\cite{green2}).
Let $\EE_{\ns}(k)$ be the set of all $\Z_p$-extensions $K$ of $k$ in which every prime of $k$ above $p$ does not split.
Then $\EE_{\ns}(k)$ is an open and closed subset of $\EE(k)$ (Lemma \ref{lem_clopen}).
For any number field $k$ (and the fixed prime $p$), we will define a non-negative integer $s(k)$ in Section \ref{sec_observation}.
As a special case of Theorem \ref{thm_main}, we obtain the following theorem.

\begin{thm}\label{thm_easy}
Let $k$ be an imaginary abelian field.
Suppose GGC holds for $(k, p)$.
Then the set
\[
\{ K \in \EE_{\ns}(k) \mid \mu(K/k) = 0, \lambda(K/k) = s(k) \}
\]
contains an open dense subset of $\EE_{\ns}(k)$.
Moreover, $s(k) = 0$ if $p$ does not split in $k/\Q$ and $s(k) = [k:\Q]/2$ if $p$ splits completely in $k/\Q$.
\end{thm}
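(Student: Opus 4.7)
The plan is to derive Theorem \ref{thm_easy} as a corollary of the main Theorem \ref{thm_main} by verifying its hypotheses in the setting of an imaginary abelian field and then explicitly computing $s(k)$ in the two specified cases.

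First I would check that Theorem \ref{thm_main} applies. Beyond the assumed GGC, the only extra condition I anticipate needing is Leopoldt's conjecture for $(k,p)$, which controls $d(k)$; for abelian $k$ this is supplied by Brumer's theorem, giving $\delta(k,p)=0$ and hence $d(k) = r_2(k) + 1 = [k:\Q]/2+1$. The openness and denseness of $\{K \in \EE_{\ns}(k) \mid \mu(K/k)=0,\ \lambda(K/k)=s(k) \}$ inside $\EE_{\ns}(k)$ then follows directly from Theorem \ref{thm_main}.

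Next I would compute $s(k)$ from its definition in Section \ref{sec_observation}, which I expect to be phrased in terms of the decomposition behavior of primes above $p$ in $\tilk/k$. When $p$ does not split in $k/\Q$, there is a unique prime $\pe$ of $k$ above $p$ with completion $k_\pe$ of degree $[k:\Q]$ over $\Q_p$; by local class field theory, the $\Z_p$-rank of the maximal abelian pro-$p$ quotient of $\Gal(\overline{k_\pe}/k_\pe)$ is $[k:\Q]+1$, comfortably exceeding $d(k)$. Thus the decomposition subgroup at $\pe$ inside $\Gal(\tilk/k)$ can attain the maximal possible rank $d(k)$, and this ``full decomposition'' picture should force $s(k) = 0$. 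When $p$ splits completely in $k/\Q$, each of the $[k:\Q]$ primes above $p$ completes to $\Q_p$ and contributes a local $\Z_p$-rank of only $2$; the global $\Z_p^{[k:\Q]/2+1}$-extension $\tilk/k$ is far too small to carry independent rank-$2$ decomposition subgroups at every prime, and the resulting deficit, respected by the $\Gal(k/\Q)$-action permuting primes above $p$, should account for $s(k) = [k:\Q]/2$.

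The main obstacle I expect is the explicit rank bookkeeping in the totally split case: matching the abstract definition of $s(k)$ to the local-to-global comparison while tracking $\Gal(k/\Q)$-equivariance on the set of primes above $p$ is where the delicate part of the argument will lie. The non-split case, by contrast, should reduce to a single local check at the unique prime above $p$.
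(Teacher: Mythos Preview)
Your outline has the right overall shape but misidentifies the key hypothesis and the key invariant.

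\textbf{The extra hypothesis of Theorem~\ref{thm_main} is $s'(k)=0$, not Leopoldt.} Theorem~\ref{thm_main} requires GGC and $s'(k)=0$; Leopoldt's conjecture is not a hypothesis there. The paper secures $s'(k)=0$ for abelian $k$ via the Generalized Gross Conjecture (Conjecture~\ref{conj_gross}), known for abelian fields by Greenberg \cite{green4}. Brumer's theorem on Leopoldt is only invoked afterwards, in the last line, to compute $d(k)=[k:\Q]/2+1$. As written, your first paragraph never verifies $s'(k)=0$, so Theorem~\ref{thm_main} cannot yet be applied.

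\textbf{The invariant $s(k)$ is built from inertia groups, not decomposition groups.} In Section~\ref{sec_observation}, $s(k)$ is defined via $s(K/k)=\rank_{\Z_p} X(K)_{\Gal(K/k)}$ and computed (Proposition~\ref{prop_almost_ram}) using the inertia groups $I_{\pe}(\tilk/k)$; it is $s'(k)$ that uses decomposition groups. Your heuristic computations therefore target the wrong object. The actual arguments are simpler than your decomposition-subgroup bookkeeping: if $p$ is nonsplit, the unique prime has $\rank_{\Z_p} I_{\pe}(\tilk/k)=d(k)$ by Theorem~\ref{thm_cft}, so no $\Z_p$-extension of $K$ inside $\tilk$ is unramified and $s(k)=0$ (Example~\ref{eg_s_ram}(2)); if $p$ splits completely, each $\pe$ has degree one, hence $\rank_{\Z_p} I_{\pe}(\tilk/k)=1$, so $\tilk/K$ is unramified for every $K\in\EE_{\ram}(k)$ and $s(k)=d(k)-1=[k:\Q]/2$ (Example~\ref{eg_s_ram}(1)). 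No $\Gal(k/\Q)$-equivariance argument is needed.
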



The construction of this paper is as follows.
In Section \ref{sec_grassmann}, we define a topology and a measure on the set of $\Z_p^i$-extensions of $k$ for a fixed positive integer $i$.
Although the measure is unnecessary to prove only Theorem \ref{thm_easy},
it enables us to give a stronger statement.
Section \ref{sec_lemmas} is a collection of lemmas about the measure which will be repeatedly used in the later sections.
The proofs of Lemmas \ref{lem_rank} and \ref{lem_induction} are postponed to Section \ref{sec_appendix}.
In Section \ref{sec_observation}, we observe some technical conditions which appear in Section \ref{sec_main_results}.
In Section \ref{sec_main_results}, we state the main theorem of this paper and deduce it from three theorems,
which will be proved in Sections \ref{sec_descent}, \ref{sec_characteristic} and \ref{sec_openness}, respectively.
The contents of Sections \ref{sec_descent}, \ref{sec_characteristic} and \ref{sec_openness} are completely independent of each other.

\section*{Acknowledgment}
The author would like to express his gratitude to his supervisor Prof. Takeshi Tsuji for many helpful advices.
The author is supported by the FMSP program at the University of Tokyo.

\section{$p$-adic Grassmann manifold}\label{sec_grassmann}

In this section we define a topology and a measure on the $p$-adic Grassmann manifold,
which allows us to define a topology and a measure on the set of all $\Z_p^i$-extensions of $k$ for a fixed number field $k$ and a fixed positive integer $i$.

Before the discussion about the Grassmann manifold,
we introduce some general terminologies.

\begin{defn}\label{def_generic}
Let $X$ be a topological space and $\mu$ a Borel measure (i.e., a measure defined for the Borel sets) on $X$.
Let $P(x)$ be a property of $x \in X$.
\begin{enumerate}
\item We say that {\it generic} $x \in X$ satisfy $P$ if there exists a closed subset $E$ of $X$ containing the set $\{x \in X \mid \lnot P(x)\}$ with $\mu(E) = 0$, where $\lnot$ denotes the negation.
\item We say that {\it almost all} $x \in X$ satisfy $P$ if there exists a measurable subset $E$ of $X$ containing the set $\{x \in X \mid \lnot P(x)\}$ with $\mu(E) = 0$.
\item We say that {\it weakly almost all} $x \in X$ satisfy $P$ if $\mu(E)=0$ for any measurable subset $E$ of $X$ contained in the set $\{x \in X \mid \lnot P(x)\}$.
\end{enumerate}
\end{defn}

\begin{rem}\label{rem_generic}
\begin{enumerate}
\item It is obvious that 
\[
\text{generic $x \in X$ satisfy $P$} \Rightarrow
\text{almost all $x \in X$ satisfy $P$} \Rightarrow
\text{weakly almost all $x \in X$ satisfy $P$}.
\]
Moreover, suppose that the measure of any non-empty open subset of $X$ is non-zero (as any measure spaces appeared in this paper).
Then
\[
\text{generic $x \in X$ satisfy $P$} \Rightarrow \text{the set $\{x \in X \mid P(x)\}$ contains an open dense subset of $X$}.
\]
It is a standard fact that the converses do not hold in general.
\item In fact, the term ``almost all'' is introduced in order to justify the term ``weakly almost all''
and will not be used essentially in this paper.
For the reason why we introduced the notion ``weakly almost all,'' see Remark \ref{rem_weakly}.
\end{enumerate}
\end{rem}

The following lemma can be easily proved.
We will often make use of it implicitly.

\begin{lem}\label{lem_both}
Let $X$ be a topological space and $\mu$ a Borel measure on $X$.
Let $P_1(x), P_2(x)$ be two properties of $x \in X$.
\begin{enumerate}
\item[$(1)$] If generic $x \in X$ satisfy $P_1$ and generic $x \in X$ satisfy $P_2$,
then generic $x \in X$ satisfy both $P_1$ and $P_2$.

\item[$(2)$] If almost all $x \in X$ satisfy $P_1$ and 
almost all (resp. weakly almost all) $x \in X$ satisfy $P_2$,
then almost all (resp. weakly almost all) $x \in X$ satisfy both $P_1$ and $P_2$.
\end{enumerate}
\end{lem}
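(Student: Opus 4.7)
The plan is to reduce every part of the lemma to finite subadditivity of $\mu$ applied to a suitable pair of enveloping null sets for the ``bad'' sets $\{x \in X \mid \lnot P_i(x)\}$ ($i=1,2$). In each case the shape of the argument is dictated by which type of null set Definition \ref{def_generic} allows.

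For part (1), I would pick closed sets $E_1, E_2 \subseteq X$ with $\mu(E_1) = \mu(E_2) = 0$ containing $\{x \mid \lnot P_1(x)\}$ and $\{x \mid \lnot P_2(x)\}$ respectively, as supplied by the hypothesis. Their union $E_1 \cup E_2$ is still closed, still contains
\[
\{x \mid \lnot(P_1(x) \wedge P_2(x))\} = \{x \mid \lnot P_1(x)\} \cup \{x \mid \lnot P_2(x)\},
\]
and has $\mu$-measure zero by finite subadditivity; hence generic $x$ satisfy both properties. The ``almost all'' half of (2) is obtained by exactly the same argument with the word ``closed'' replaced by ``measurable''.

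The case requiring a little care is the ``weakly almost all'' half of (2), where the hypothesis and conclusion are asymmetric in the two properties. Here I would fix an arbitrary measurable $E \subseteq \{x \mid \lnot(P_1(x) \wedge P_2(x))\}$ and aim to show $\mu(E) = 0$. Using the ``almost all $P_1$'' hypothesis I choose a measurable null set $F_1 \supseteq \{x \mid \lnot P_1(x)\}$ and decompose $E = (E \cap F_1) \cup (E \setminus F_1)$. The first piece has measure zero because it is contained in $F_1$; the second piece is measurable and satisfies $E \setminus F_1 \subseteq \{x \mid \lnot P_2(x)\}$, so its measure is zero by the ``weakly almost all $P_2$'' hypothesis. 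Subadditivity then gives $\mu(E) = 0$, as required.

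I do not anticipate any genuine obstacle, since the lemma is essentially formal. The one mild subtlety is exactly the asymmetry just mentioned: the definition of ``weakly almost all'' does not assume that the bad set itself is measurable, so one really does need the stronger ``almost all'' hypothesis on the other property in order to replace the a priori non-measurable set $\{x \mid \lnot P_1(x)\}$ by a measurable null set before the subadditivity step can be invoked. This also clarifies why the symmetric ``weakly almost all + weakly almost all'' statement is not asserted.
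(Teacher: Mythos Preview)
Your proposal is correct and matches the paper's own argument essentially line for line: closed (resp.\ measurable) null covers $E_1$, $E_2$ combined via finite subadditivity for (1) and the first half of (2), and for the weakly-almost-all case the same decomposition $E = (E \cap F_1) \cup (E \setminus F_1)$ using a measurable null cover $F_1 \supseteq \{x \mid \lnot P_1(x)\}$. Your closing remark on the necessary asymmetry between the two hypotheses is also exactly the point the paper flags.
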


\if0
\begin{proof}
(1) Let $E_1 \supset \{x \in X \mid \neg P_1(x) \}$ and $E_2 \supset \{x \in X \mid \neg P_2(x) \}$ be  closed sets with $\mu(E_1) = 0$ and $\mu(E_2) = 0$.
Then $E_1 \cup E_2 \supset \{x \in X \mid \lnot (P_1(x) \land P_2(x)) \}$ is a closed set with $\mu(E_1 \cup E_2) = 0$.

(2) Let $E_1 \supset \{x \in X \mid \neg P_1(x) \}$ be a measurable set with $\mu(E_1) = 0$.

First suppose that almost all $x \in X$ satisfy $P_2$
and let $E_2 \supset \{x \in X \mid \lnot P_2(x) \}$ be a measurable set with $\mu(E_2) = 0$.
Then $E_1 \cup E_2 \supset \{x \in X \mid \lnot (P_1(x) \land P_2(x)) \}$ is a measurable set with $\mu(E_1 \cup E_2) = 0$.

Next suppose that weakly almost all $x \in X$ satisfy $P_2$.
For any measurable subset $E \subset \{x \in X \mid \lnot(P_1(x) \land P_2(x)) \}$, we have a disjoint union
\[
E = (E \setminus E_1) \sqcup (E \cap E_1).
\]
Then $\mu(E \setminus E_1) = 0$ since $E \setminus E_1$ is a measurable subset of $\{x \in X \mid \lnot P_2(x) \}$, its measure is zero.
Hence $\mu(E) = 0$.
This completes the proof of the lemma.
\end{proof}
\fi

Now we begin the discussion about the $p$-adic Grassmann manifold.
Let $M$ be a free $\Z_p$-module of rank $d$ and $i$ a positive integer with $i \leq d$.

\begin{defn}
We define the $p$-adic Grassmann manifold $\Gr(i, M)$ as 
the set of all $\Z_p$-submodules $N$ of $M$ such that 
$M/N$ is a free $\Z_p$-module of rank $i$.
\end{defn}

We denote by $\Aut(M)$ the group of automorphisms of $M$ as a $\Z_p$-module.
It is well-known that $\Aut(M)$ admits a natural topology defined by choosing a $\Z_p$-basis of $M$ and identifying $\Aut(M)$ with $\GL_d(\Z_p)$. 
This topology is independent of the choice of the basis and makes $\Aut(M)$ a profinite group.

If $g \in \Aut(M)$ and $N \in \Gr(i,M)$, then $M/g(N) = g(M/N)$ shows that $g(N) \in \Gr(i,M)$.
Thus the group $\Aut(M)$ acts on the Grassmann manifold $\Gr(i,M)$ naturally.

\begin{lem}\label{lem_transitivity}
The natural action of $\Aut(M)$ on $\Gr(i,M)$ is transitive.
\end{lem}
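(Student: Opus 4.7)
The plan is to show that every $N \in \Gr(i,M)$ is in the $\Aut(M)$-orbit of a fixed ``standard'' submodule $N_0$; transitivity then follows by composing automorphisms.

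First, I would fix a $\Z_p$-basis $e_1, \dots, e_d$ of $M$ and set $N_0 = \Z_p e_1 \oplus \cdots \oplus \Z_p e_{d-i}$, so that $M/N_0$ is free of rank $i$, hence $N_0 \in \Gr(i,M)$. Now given an arbitrary $N \in \Gr(i,M)$, the key observation is that $M/N$ is free, so the quotient map $M \twoheadrightarrow M/N$ admits a $\Z_p$-linear section (any free module is projective over $\Z_p$). This yields a decomposition $M = N \oplus N'$ where $N' \subset M$ is a free $\Z_p$-submodule of rank $i$ mapping isomorphically onto $M/N$. Consequently $N$ itself, as a direct summand of the finitely generated free module $M$, is free of rank $d-i$.

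Next, I would choose a $\Z_p$-basis $f_1, \dots, f_{d-i}$ of $N$ and a $\Z_p$-basis $f_{d-i+1}, \dots, f_d$ of $N'$. Concatenating, $f_1, \dots, f_d$ is a $\Z_p$-basis of $M$. Defining $g \in \End(M)$ by $g(e_j) = f_j$ for all $j$, $g$ sends one basis to another, so $g \in \Aut(M)$, and by construction $g(N_0) = N$.

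Applying this to two arbitrary elements $N_1, N_2 \in \Gr(i,M)$, I obtain $g_1, g_2 \in \Aut(M)$ with $g_1(N_0) = N_1$ and $g_2(N_0) = N_2$; then $g_2 g_1^{-1} \in \Aut(M)$ sends $N_1$ to $N_2$, proving transitivity. There is no real obstacle here: the only nontrivial input is the splitting $M = N \oplus N'$, which is immediate from the freeness of $M/N$ built into the definition of $\Gr(i,M)$.
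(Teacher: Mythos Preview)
Your proof is correct and follows essentially the same approach as the paper: both use the freeness of $M/N$ to split $M = N \oplus (\text{complement})$ and then build an automorphism by matching bases of the summands. The only cosmetic difference is that you route through a fixed reference submodule $N_0$, whereas the paper compares two arbitrary elements directly.
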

\begin{proof}
Let $N$ and $N'$ be any two elements of $\Gr(i,M)$.
Since $M/N$ is a free module, there exists a submodule $L$ of $M$ such that $M = N \oplus L$.
Similarly let $M = N' \oplus L'$.
As the ranks of $N$ and $N'$ are equal, we can construct an automorphism $g$ of $M$
such that $g(N) = N'$ and $g(L) = L'$.
This completes the proof.
\end{proof}

Take the Haar measure on $\Aut(M)$ which is normalized so that the measure of $\Aut(M)$ is 1.
(Since $\Aut(M)$ is compact, the left Haar measure is automatically the right Haar measure,
so we need not mention it.
Note that, because in the following we mind only whether the measure of a certain subset is zero or not, the normalization does not matter at all.)
Take any $N_0 \in \Gr(i, M)$ and consider the surjective map (by Lemma \ref{lem_transitivity})
\[
\begin{array}{ccc}
\Aut(M) & \twoheadrightarrow & \Gr(i, M).\\
g & \mapsto & g(N_0)
\end{array}
\]
By this surjective map, we give the quotient topology and the pushforward measure to $\Gr(i,M)$.
This measure on $\Gr(i, M)$ is a Borel measure and $\Aut(M)$-invariant.

\begin{lem}
The topology and the measure on $\Gr(i,M)$ defined above are independent of the choice of $N_0$.
\end{lem}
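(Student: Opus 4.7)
The plan is to compare the constructions coming from two different basepoints $N_0, N_0' \in \Gr(i,M)$ by relating the two surjections $\pi_{N_0}, \pi_{N_0'} \colon \Aut(M) \twoheadrightarrow \Gr(i,M)$ given by $g \mapsto g(N_0)$ and $g \mapsto g(N_0')$ through a single right-translation on $\Aut(M)$. By Lemma \ref{lem_transitivity}, there is some $h \in \Aut(M)$ with $h(N_0) = N_0'$. Then for every $g \in \Aut(M)$ we have
\[
\pi_{N_0'}(g) = g(N_0') = gh(N_0) = \pi_{N_0}(gh),
\]
so $\pi_{N_0'} = \pi_{N_0} \circ R_h$, where $R_h \colon \Aut(M) \to \Aut(M)$ denotes right multiplication by $h$.

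The key fact is that $R_h$ is simultaneously a homeomorphism of $\Aut(M)$ (its inverse is $R_{h^{-1}}$) and a measure-preserving map (since the Haar measure on $\Aut(M)$ is bi-invariant; in fact, any Haar measure on a compact group is bi-invariant, as noted in the paper). This lets us transfer both the topology and the measure cleanly.

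For the topology, a subset $U \subset \Gr(i,M)$ is open for the quotient topology induced by $\pi_{N_0'}$ if and only if $\pi_{N_0'}^{-1}(U) = R_h^{-1}(\pi_{N_0}^{-1}(U))$ is open in $\Aut(M)$, which (since $R_h$ is a homeomorphism) is equivalent to $\pi_{N_0}^{-1}(U)$ being open; hence the two quotient topologies coincide. For the measure, one checks for any Borel subset $B \subset \Gr(i,M)$ (Borel with respect to the common topology just obtained) that
\[
(\pi_{N_0'})_*(\mu)(B) = \mu(\pi_{N_0'}^{-1}(B)) = \mu(R_h^{-1}(\pi_{N_0}^{-1}(B))) = \mu(\pi_{N_0}^{-1}(B)) = (\pi_{N_0})_*(\mu)(B),
\]
using the right-invariance of $\mu$ in the middle equality.

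There is no real obstacle here; the only subtlety worth noting is the bi-invariance of the Haar measure on the compact group $\Aut(M)$, and the fact that on a quotient by a continuous surjection the quotient topology really is determined by the preimage criterion applied above. Both are standard, so the lemma follows at once from the identity $\pi_{N_0'} = \pi_{N_0} \circ R_h$.
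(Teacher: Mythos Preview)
Your proof is correct and follows essentially the same approach as the paper: both use the transitivity lemma to find an element $h$ (the paper's $g'$) with $h(N_0)=N_0'$, observe that the two surjections differ by right multiplication $R_h$ on $\Aut(M)$, and conclude from the fact that $R_h$ is a measure-preserving homeomorphism. Your version simply spells out the quotient-topology and pushforward-measure verifications that the paper leaves implicit in its commutative diagram.
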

\begin{proof}
Take another $N_0' \in \Gr(i,M)$.
By Lemma \ref{lem_transitivity}, there is $g' \in \Aut(M)$ such that $N_0' = g'(N_0)$.
Then we have the following commutative diagram
\[
\begin{CD}
\Aut(M) @>{g \mapsto g(N_0')}>> \Gr(i,M)\\
@V {\bullet g'} VV @VV{id}V\\
\Aut(M) @>>{g \mapsto g(N_0)}> \Gr(i,M).
\end{CD}
\]
Since the left vertical arrow is a homeomorphism preserving the measure, this diagram proves the lemma.
\end{proof}

The topology of $\Gr(i, M)$ can be described as follows.
For $N_0 \in \Gr(i, M)$ and a non-negative integer $n$, put
\[
V_n(N_0) = \{ N \in \Gr(i, M) \mid N + p^n M = N_0 + p^n M \}.
\]
Note that $V_n(N_0) = \{ N \in \Gr(i, M) \mid N \subset N_0 + p^n M \}$.
In fact, if $N \subset N_0 + p^n M$, then $N + p^n M \subset N_0 + p^n M$ and the both sides have the same index $p^n i$ in $M$.
Therefore $N \in V_n(N_0)$, as claimed.

\begin{lem}\label{lem_neighborhood}
$V_n(N_0)$ is an open and closed subset of $\Gr(i, M)$ and 
the family $\{ V_n(N_0)\}_n$ constitute a fundamental system of neighborhoods of $N_0$.
\end{lem}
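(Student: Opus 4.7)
The plan is to work through the quotient map $q\colon \Aut(M) \twoheadrightarrow \Gr(i,M)$, $g \mapsto g(N_0)$, that defines the topology on $\Gr(i,M)$. Recall $\Aut(M) \cong \GL_d(\Z_p)$ is a profinite group for which a fundamental system of open neighborhoods of the identity is given by the kernels $U_n = \ker \pi_n$ of the reduction maps $\pi_n \colon \Aut(M) \to \Aut(M/p^n M) = \GL_d(\Z/p^n \Z)$.

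For the claim that $V_n(N_0)$ is open and closed, I would show this for its preimage $q^{-1}(V_n(N_0)) = \{g \in \Aut(M) : g(N_0) + p^n M = N_0 + p^n M\}$. The condition inside the braces depends only on the class of $g$ modulo $p^n$, so this preimage equals $\pi_n^{-1}(T_n)$, where $T_n \subset \Aut(M/p^n M)$ is the set of $\bar{g}$ stabilizing the image of $N_0$ in $M/p^n M$. Since $\Aut(M/p^n M)$ is finite (hence discrete), $T_n$ is open and closed; continuity of $\pi_n$ then yields the result.

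For the fundamental system property, let $U$ be any open neighborhood of $N_0$ in $\Gr(i,M)$. Then $q^{-1}(U)$ is open in $\Aut(M)$ and contains the identity, so contains some $U_n$. I would then argue that $V_n(N_0) \subset U$, which reduces to the following realization statement: every $N \in V_n(N_0)$ can be written as $g(N_0)$ for some $g \in U_n$.

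This last realization step is the main obstacle. Given $N \in V_n(N_0)$, I would pick a $\Z_p$-basis $e_1, \ldots, e_{d-i}$ of $N_0$ and extend it to a basis $e_1, \ldots, e_d$ of $M$ (possible since $N_0$ is a direct summand of $M$). For $j \leq d-i$, since $e_j \in N_0 + p^n M = N + p^n M$, I can write $e_j = f_j + p^n m_j$ with $f_j \in N$ and $m_j \in M$; for $j > d-i$, set $f_j = e_j$. The transition matrix from $(e_j)$ to $(f_j)$ is congruent to the identity modulo $p^n$, hence lies in $\GL_d(\Z_p)$, so defining $g \in \Aut(M)$ by $g(e_j) = f_j$ gives an element of $U_n$. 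Then $g(N_0) = \mathrm{span}_{\Z_p}(f_1, \ldots, f_{d-i}) \subset N$, and equality follows because the induced surjection $M/g(N_0) \twoheadrightarrow M/N$ between free $\Z_p$-modules of the same rank $i$ must be an isomorphism (e.g.\ by tensoring with $\Q_p$ and comparing dimensions), forcing $N/g(N_0) = 0$.
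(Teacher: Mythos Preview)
Your proof is correct and follows essentially the same approach as the paper: both arguments identify the open normal subgroup $U_n = 1 + p^n \End(M)$ of $\Aut(M)$ and show that $V_n(N_0) = q(U_n)$, with the key ``realization'' step (your explicit basis construction, the paper's choice of a lift $h:N\to M$ with $(1-p^n h)(N)\subset N_0$) being the same computation viewed from dual sides. Your separate verification that $q^{-1}(V_n(N_0)) = \pi_n^{-1}(T_n)$ is open and closed is a mild streamlining compared to the paper, which deduces both statements directly from the equality $V_n(N_0)=q(U_n)$.
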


\begin{proof}
Since $V_0(N_0) = \Gr(i, M)$ is trivially an open and closed subset, we consider positive integers $n$.
Let $\End(M)$ denote the ring of endomorphisms of $M$ as a $\Z_p$-module.
Then $1+p^n\End(M)$ is an open subgroup of $\Aut(M)$.

Let $\alpha : \Aut(M) \to \Gr(i, M)$ be the surjective map defined by $g \mapsto g(N_0)$.
We claim that $\alpha(1+p^n\End(M)) = V_n(N_0)$.
For any $h \in \End(M)$, we have
\[
(1+p^n h)(N_0) \subset N_0 + p^n h(N_0) \subset N_0 + p^n M,
\]
which shows that $\alpha(1+p^n\End(M)) \subset V_n(N_0)$.
Conversely take any $N \in V_n(N_0)$.
Since $N \subset N_0 + p^n M$ and $N$ is a free $\Z_p$-module, there is a $\Z_p$-homomorphism $h: N \to M$ such that
$(1 - p^n h)(N) \subset N_0$.
Since $M/N$ is a free $\Z_p$-module, we can extend $h$ so that $h \in \End(M)$.
Then we have $(1-p^n h)(N) = N_0$ and consequently $\alpha((1-p^n h)^{-1}) = N$,
which proves the claim.

By the definition of the topology on $\Gr(i, M)$, the above claim proves the lemma immediately. 
\end{proof}

Let $k$ be a number field.
Throughout this paper, we usually denote a $\Z_p^i$-extension of $k$ by $K^{(i)}$.
If $i = 1$ then we often omit the superscript and denote a $\Z_p$-extension of $k$ by $K$.
Let $K^{(d)}$ be a $\Z_p^d$-extension of $k$ and $i$ a positive integer with $i \leq d$.
Then we have a natural bijection of sets
\[
\begin{array}{ccc}
\Gr(i, \Gal(K^{(d)}/k)) &\simeq & \{ \Z_p^{i} \text{-extension of }k \text{ contained in } K^{(d)} \}.\\
\Gal(K^{(d)}/K^{(i)}) &\leftrightarrow& K^{(i)}
\end{array}
\]
Through this bijection, we give a topology and a Borel measure on the set of $\Z_p^i$-extensions of $k$ contained in $K^{(d)}$.

\begin{rem}\label{rem_top_greenberg}
Recall that $\EE(k)$ denote the set of all $\Z_p$-extensions of $k$, which is identified with $\Gr(1, \Gal(\tilk/k))$.
Then by Lemma \ref{lem_neighborhood}, a fundamental system of neighborhoods of $K_0 \in \EE(k)$ is
given by $\{ K \in \EE(k) \mid [K \cap K_0:k] \geq p^n \}$ where $n$ runs through non-negative integers.
Therefore the topology on $\EE(k)$ coincides with that defined in \cite{green2}.
\end{rem}

Let $P$ be a property of $\Z_p^i$-extensions of $k$.
We say that {\it generic} (resp. {\it almost all}, resp. {\it weakly almost all}) $\Z_p^i$-extensions $K^{(i)} \subset K^{(d)}$ of $k$ satisfy $P$ if generic (resp. almost all, resp. weakly almost all) $K^{(i)} \in \Gr(i, \Gal(K^{(d)}/k))$ satisfy $P$.
When $K^{(d)} = \tilk$, we simply say that generic (resp. almost all, resp. weakly almost all) $\Z_p^i$-extensions $K^{(i)}$ of $k$ satisfy $P$.

\section{Lemmas on measure}\label{sec_lemmas}
In this section we gather some lemmas, mainly regarding the measure on the $p$-adic Grassmann manifold.

As in the previous section, let $M$ be a free $\Z_p$-module of rank $d$ and $i$ a positive integer with $i \leq d$.
We establish a method to compute the measure.
Choose a $\Z_p$-basis of $M$ and identify $M$ with $\Z_p^d$ whose elements are written as column vectors.
Then we can also identify $\Aut(M)$ with $\GL_d(\Z_p)$ which acts on $\Z_p^d$ by left multiplication.
Let $e_1, \dots, e_d$ be the standard basis of $\Z_p^d$ and 
put $N_0 = \langle e_1, \dots, e_{d-i} \rangle \in \Gr(i, \Z_p^d)$.
Then the isotropy group of $N_0$ with respect to the action of $\GL_d(\Z_p)$ is 
\[
B = \{(g_{jk}) \in \GL_d(\Z_p) \mid g_{jk}=0 \text{ if } j > d-i \text{ and } k \leq d-i \}
= \left\{
\begin{pmatrix}
\ast_{d-i} & \ast \\
0 & \ast_i
\end{pmatrix}
\right\},
\]
where the subscript denotes the size of square matrices.
The map $\GL_d(\Z_p) / B \simeq \Gr(i, \Z_p^d)$ defined by $g \mapsto g(N_0)$ is a homeomorphism preserving the measure.

Define another subgroup $H$ of $\GL_d(\Z_p)$ by
\[
H = \{(g_{jk}) \in \GL_d(\Z_p) \mid g_{jk} = \delta_{jk} \text{ if } j \leq d-i \text{ or } k > d-i \}
=
\left\{
\begin{pmatrix}
1_{d-i} & 0 \\
\ast & 1_i
\end{pmatrix}
\right\},
\]
where $\delta_{jk}$ denotes the Kronecker delta.
Then $H$ is isomorphic to $M_{i,d-i}(\Z_p)$ as a topological group via
\[
\begin{array}{ccc}
M_{i,d-i}(\Z_p) & \simeq & H, \\
A & \leftrightarrow & \begin{pmatrix} 1_{d-i} & 0 \\ A & 1_i \end{pmatrix}
\end{array}
\]
and it gives a parameterization of a neighborhood of $N_0$ as follows.

\begin{lem}\label{lem_measure_calculation}
The natural map $H \to \GL_d(\Z_p) / B$ is a homeomorphism onto an open subset and 
the restriction of the measure of $\GL_d(\Z_p) / B$ to $H$ is a Haar measure on $H$.
\end{lem}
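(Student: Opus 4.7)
The plan is to establish the topological claim by a direct block-matrix manipulation, then deduce the measure-theoretic claim from uniqueness of Haar measure on the compact group $H$.

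I would first note $H \cap B = \{1\}$: a matrix belonging to both $H$ and $B$ must have identity on the diagonal blocks and zero off them. Hence $h \mapsto hB$ gives an injection $H \hookrightarrow \GL_d(\Z_p)/B$ whose image is exactly $HB/B$. The key step is to identify $HB$ via a block LU (Schur complement) decomposition: for $g = \begin{pmatrix} A & Y \\ Z & W \end{pmatrix}$ with $A$ of size $(d-i) \times (d-i)$, the factorization
\[
g = \begin{pmatrix} 1_{d-i} & 0 \\ ZA^{-1} & 1_i \end{pmatrix} \begin{pmatrix} A & Y \\ 0 & W - ZA^{-1}Y \end{pmatrix}
\]
exhibits $g \in HB$ whenever $A \in \GL_{d-i}(\Z_p)$, the Schur complement $W - ZA^{-1}Y$ being automatically invertible over $\Z_p$ since its determinant equals $\det(g)/\det(A) \in \Z_p^{\times}$. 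Conversely, any element of $HB$ has top-left block in $\GL_{d-i}(\Z_p)$ (it equals the top-left block of the $B$-factor). Therefore $HB$ is the preimage of $\GL_{d-i}(\Z_p) \subset M_{d-i}(\Z_p)$ under the continuous top-left block projection, and hence open in $\GL_d(\Z_p)$. So the image of $H$ in $\GL_d(\Z_p)/B$ is open; since $H$ is compact and $\GL_d(\Z_p)/B$ is Hausdorff, the continuous injection is a homeomorphism onto its open image.

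For the measure assertion, the pushforward measure $\nu$ on $\GL_d(\Z_p)/B$ is $\GL_d(\Z_p)$-invariant, in particular $H$-invariant under left translation. Under the homeomorphism $H \cong HB/B$, this $H$-action on $HB/B$ corresponds to left translation of $H$ on itself, so $\nu|_{HB/B}$ transports to a translation-invariant Borel measure on $H$. It is finite (since $\nu$ is a probability measure) and nonzero (since $HB$ is a nonempty open subset of $\GL_d(\Z_p)$ and therefore has positive Haar measure). By uniqueness of Haar measure on the compact group $H$ up to positive scalar, the restriction is a Haar measure on $H$. The main obstacle is the Schur-complement verification in the topological step: one must confirm that the bottom-right block of the $B$-factor is invertible over $\Z_p$ (not merely over $\Q_p$), which is what allows the LU factorization to realize $HB$ as the obvious open locus. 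Once this is in hand, both the topological identification and the Haar-measure identification are essentially formal.
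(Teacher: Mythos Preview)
Your proof is correct but takes a genuinely different route for the topological step. The paper does not compute $HB$ explicitly; instead it introduces the congruence subsets $H' = 1_d + p\{(g_{jk}) \mid g_{jk}=0 \text{ if } j \leq d-i \text{ or } k>d-i\}$ and $B' = 1_d + p\{(g_{jk}) \mid g_{jk}=0 \text{ if } j>d-i \text{ and } k\leq d-i\}$, verifies $H'B' = 1 + pM_d(\Z_p)$, and concludes that for any $h\in H$, $b\in B$ one has $HB \supset hH'B'b = hb + pM_d(\Z_p)$, an open neighborhood of $hb$. Your block LU / Schur-complement argument is sharper: it identifies $HB$ exactly as the open set $\{g \in \GL_d(\Z_p) \mid \text{top-left $(d-i)\times(d-i)$ block lies in } \GL_{d-i}(\Z_p)\}$, which is more informative and arguably cleaner, at the cost of a small determinant check. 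For the measure part the two arguments are essentially the same ($H$-invariance plus nonzero plus uniqueness of Haar); the paper explicitly invokes outer/inner regularity of finite Borel measures on metrizable spaces, which you implicitly use when appealing to uniqueness on the compact group $H$.
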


\begin{proof}
The injectivity follows from $H \cap B = \{ 1 \}$.
Hence the map is a homeomorphism onto its image.
We shall show that $HB \subset \GL_d(\Z_p)$ is an open subset.
First observe that $B$ contains
\[
B' = 1_d + p\{(g_{jk}) \in M_d(\Z_p) \mid g_{jk}=0 \text{ if } j > d-i \text{ and } k \leq d-i \}
=\left\{1_d + p
\begin{pmatrix}
\ast_{d-i} & \ast \\
0 & \ast_i
\end{pmatrix}
\right\}
\]
and $H$ contains
\[
H' := 1_d + p\{(g_{jk}) \in M_d(\Z_p) \mid g_{jk}=0 \text{ if } j \leq d-i \text{ or } k > d-i \}
=
\left\{
1_d + p\begin{pmatrix}
0_{d-i} & 0 \\
\ast & 0_i
\end{pmatrix}
\right\}.
\]
One can easily check that
\[
H'B' = 1+pM_d(\Z_p).
\]
Hence for any $h \in H$ and $b \in B$, we have
\[
HB \supset hH'B'b = h(1+ pM_d(\Z_p))b = hb + pM_d(\Z_p),
\]
which is an open neighborhood of $hb$.
This shows that $HB$ is open in $\GL_d(\Z_p)$.
Therefore the image of $H \to \GL_d(\Z_p) / B$ is an open subset.

The restriction of the measure of $\GL_d(\Z_p) / B$ to $H$ is clearly $H$-invariant 
and the openness shows that it is not the zero measure.
For the outer and inner regularity, 
we use the fact that a finite Borel measure on a metrizable space is outer and inner regular.
This proves that the concerned measure is a Haar measure on $H$.
\end{proof}

Therefore the image of $H \to \GL_d(\Z_p)/B \simeq \Gr(i, \Z_p^d)$ is an open neighborhood of $N_0$.
We shall show that $\Gr(i, M)$ is covered by such open sets.
For a set $W \subset \{1, \dots, d\}$ with $d-i$ elements, 
we put $N_W = \langle e_w \mid w \in W \rangle \in \Gr(i, \Z_p)$ (hence $N_0 = N_{\{1, \dots, d-i\}}$).
Then we can construct an open neighborhood $U_W$ of $N_W$ in the same manner as above.
In fact, put 
\[
H_W = \{ (g_{jk}) \in \GL_d(\Z_p) \mid g_{jk} = \delta_{jk} \text{ if } j \in W \text{ or } k \not\in W \}
\]
and let $U_W$ denote the image of the map $H_W \to \Gr(i, \Z_p^d)$ defined by $g \mapsto g(N_W)$.
Then by Lemma \ref{lem_measure_calculation}, $U_W$ is an open neighborhood of $N_W$.
The following lemma can be easily proved, and we omit the proof.

\begin{lem}\label{lem_covering}
The family $\{ U_W \}_W$, where $W$ runs through all subsets of $\{1, \dots, d\}$ with $d-i$ elements, constitute an open covering of $\Gr(i, \Z_p^d)$.
\end{lem}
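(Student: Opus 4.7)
The plan is to show that every $N \in \Gr(i, \Z_p^d)$ lies in at least one $U_W$; openness of each $U_W$ has already been established via Lemma \ref{lem_measure_calculation}.

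First I would pick an arbitrary $\Z_p$-basis $v_1, \dots, v_{d-i}$ of $N$ and assemble it into a $d \times (d-i)$ matrix $A$ over $\Z_p$ whose $k$-th column is $v_k$. Because $M/N$ is free of rank $i$, the basis $v_1, \dots, v_{d-i}$ extends to a $\Z_p$-basis of $\Z_p^d$. Equivalently, the reduction $\overline{A} \in M_{d,d-i}(\F_p)$ has full column rank $d-i$, so some $d-i$ of its rows are linearly independent over $\F_p$. Let $W \subset \{1, \dots, d\}$ be the set of indices of such a collection of rows; then $|W| = d-i$ and the submatrix $A_W \in M_{d-i}(\Z_p)$ formed by the rows of $A$ indexed by $W$ has nonzero determinant mod $p$, hence lies in $\GL_{d-i}(\Z_p)$.

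Next I would replace the basis by the columns of $B := A \cdot A_W^{-1}$. Since $A_W^{-1} \in \GL_{d-i}(\Z_p)$, the columns of $B$ are still a $\Z_p$-basis of $N$, and by construction $B_W = I_{d-i}$. Writing $W = \{w_1 < \dots < w_{d-i}\}$ and indexing the columns of $B$ by $W$ accordingly, the $w$-th column of $B$ has the form
\[
B_w = e_w + \sum_{j \notin W} B_{j w}\, e_j \qquad (w \in W).
\]
Now define $g \in M_d(\Z_p)$ by $g_{jk} = \delta_{jk}$ whenever $j \in W$ or $k \notin W$, and $g_{jw} = B_{jw}$ for $j \notin W$, $w \in W$. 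Arranging the basis as $W$ followed by $W^c$, the matrix $g$ is block lower-triangular with identity diagonal blocks, so $g \in \GL_d(\Z_p)$, and clearly $g \in H_W$. A direct computation gives $g(e_w) = B_w$ for each $w \in W$, whence $g(N_W) = N$. Therefore $N \in U_W$, which completes the covering.

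The only substantive ingredient is the linear-algebra fact that a direct summand of $\Z_p^d$ of corank $i$ admits an invertible $(d-i) \times (d-i)$ minor in any basis matrix, which itself reduces by Nakayama to the evident statement over $\F_p$. The remaining work is entirely bookkeeping with rows indexed by $W$ versus $W^c$; the most error-prone step is getting the conventions of $H_W$, the normalization $B_W = I_{d-i}$, and the action $g \cdot N_W$ consistently aligned, so that is where I would take most care in the write-up.
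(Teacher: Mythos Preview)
Your argument is correct: extending a basis of $N$ to one of $\Z_p^d$ shows $\overline{A}$ has full column rank over $\F_p$, which yields an invertible $(d-i)\times(d-i)$ minor $A_W$, and the normalization $B=A\,A_W^{-1}$ produces exactly an element of $H_W$ sending $N_W$ to $N$. The paper in fact omits the proof of this lemma entirely, merely remarking that it is easy; your write-up is a clean and standard realization of the intended argument (the implicit ``elementary column operations'' are precisely your right-multiplication by $A_W^{-1}$), so there is nothing to compare beyond noting that you have supplied the details the paper leaves out.
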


\if0
\begin{proof}
For any $N \in \Gr(i, \Z_p^d)$, there is $g \in \GL_d(\Z_p)$ such that $g(N_0) = N$ by Lemma \ref{lem_transitivity}.
After applying elementary column operations with coefficients in $\Z_p$,
we can obtain the assertion.
We omit the detail of the proof.
There exists $W = \{ w_1, \dots, w_{d-i} \}$ with $1 \leq w_1 < \dots < w_{d-i} \leq d$ such that 
\[
\varphi(e_1) \in e_{w_1} + \langle e_j \mid j \not\in W \rangle , \dots, \varphi(e_{d-i}) \in e_{w_{d-i}} + \langle e_j \mid j \not\in W \rangle.
\]
Then $N \in U_W$ by definition of $U_W$.
\end{proof}

\begin{lem}\label{lem_intersect}
$U_W \cap U_{W'} \neq \emptyset$ for any two subsets $W$ and $W'$ of $\{1, \dots, d\}$ with $d-i$ elements.
\end{lem}
\fi

Lemmas \ref{lem_rank} and \ref{lem_induction} below will play important roles in Sections \ref{sec_observation} and \ref{sec_main_results}, respectively.
Because the proofs of them are elementary but considerably long, we postpone them to Section \ref{sec_appendix}.

\begin{lem}\label{lem_rank}
Let $M$ be a free $\Z_p$-module of rank $d$ and $i$ a positive integer with $i \leq d$.
Let $L_1, \dots, L_r$ be $\Z_p$-submodules of $M$ such that $\rank_{\Z_p} L_j \geq i$ for $1 \leq j \leq r$.
\begin{enumerate}
\item[$(1)$] $\rank_{\Z_p} (\Image(L_j \to M/N)) = i$ for all $1 \leq j \leq r$ for generic $N \in \Gr(i, M)$.
More generally, if $i \leq i' \leq d$ is a positive integer, then $\rank_{\Z_p} (\Image(L_j \to M/N)) \geq i$ for all $1 \leq j \leq r$ for generic $N \in \Gr(i', M)$.
\item[$(2)$] Suppose $i=1$. 
For $N \in \Gr(1, M)$, put 
\[
s(N) = \rank_{\Z_p} \left( N\bigg/ \sum_{j=1}^r (N \cap L_j) \right)
\]
and put 
\[
s = \min \{ s(N) \mid N \in \Gr(1,M), \rank_{\Z_p} (\Image(L_j \to M/N)) = 1 \text{ for all } 1 \leq j \leq r\}.
\] 
Then $s(N) = s$ for generic $N \in \Gr(1, M)$.
\end{enumerate}
\end{lem}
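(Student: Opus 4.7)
The plan is to reduce both parts to showing that a ``bad'' subset of $\Gr(i', M)$ is contained in a closed set of measure zero, by working chart-by-chart. Via Lemmas \ref{lem_covering} and \ref{lem_measure_calculation}, it suffices to exhibit the bad set on each chart $U_W$ as the vanishing locus of a system of polynomials (in the coordinates of $H_W \cong M_{i',d-i'}(\Z_p)$) that is not identically zero; such a zero set has Haar measure zero in $\Z_p^{i'(d-i')}$, since a nonzero polynomial over $\Z_p$ cannot vanish on the whole Zariski-dense set $\Z_p^n$.

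For part (1) I would fix the chart with $W=\{1,\dots,d-i'\}$ and parameterize $N$ by $A\in M_{i',d-i'}(\Z_p)$, so that the quotient map $q_N\colon M\to M/N\cong\Z_p^{i'}$ takes the explicit form $q_N(x)=\pi(x)-A\pi'(x)$, where $\pi$ and $\pi'$ are the projections onto the last $i'$ and first $d-i'$ coordinates. For a $\Z_p$-basis $\ell_1,\dots,\ell_m$ of $L_j$ (free of rank $m\ge i$), the matrix of $q_N(L_j)$ is $B_j-AC_j$, with entries polynomial in $A$; then ``$\rank\Image(L_j\to M/N)<i$'' is the simultaneous vanishing of all $i\times i$ minors of $B_j-AC_j$. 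To see this polynomial system is not identically zero, I would construct a single global $N_{*}\in\Gr(i',M)$ realizing the full rank: pick $i$ linearly independent elements of each $L_j$, span a subspace $V_j\subset M\otimes\Q_p$ of dimension $i$, choose a $\Q_p$-subspace $U$ of dimension $d-i'$ avoiding all the $V_j$'s (possible by standard general-position arguments), and set $N_{*}=U\cap M$; then $M/N_{*}$ is free of rank $i'$ and $L_j\to M/N_{*}$ has rank $\ge i$ for every $j$. Irreducibility of $\Gr(i',M\otimes\Q_p)$ as a $\Q_p$-variety then forces the good locus to be Zariski-dense, intersecting every chart, so in each $U_W$ the defining polynomials are not identically zero. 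The case of arbitrary $W$ follows by the transitivity of the $\Aut(M)$-action, which permutes charts while preserving the rank hypotheses on $L_j$.

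For part (2) I would combine the first part with an analysis of how $\rank_{\Z_p}\sum_j(N\cap L_j)$ varies with $N$. On the chart $U_W$ (with $i'=1$, so $A\in\Z_p^{d-1}$), a $\Z_p$-basis of $N\cap L_j$ is obtained as the kernel of the $\Z_p$-linear form $q_N|_{L_j}\colon L_j\to\Z_p$, whose coefficient vector depends linearly on $A$; concatenating over $j$ one obtains a matrix whose column span equals $\sum_j(N\cap L_j)$ and whose entries are (at least locally on open subloci where a uniform basis exists) polynomial in $A$. The condition $\rank\sum_j(N\cap L_j)\ge d-1-s$ then becomes the non-vanishing of some $(d-1-s)\times(d-1-s)$ minor, a Zariski-open condition in $A$, so its complement is Zariski-closed. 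By the very definition of $s$ as a minimum, the set $\{N:s(N)=s\}$ is non-empty; combined with irreducibility of $\Gr(1,M\otimes\Q_p)$ this set is Zariski-dense, meeting every chart. Hence the complement is cut out by nonzero polynomials on each chart and has measure zero; together with the measure-zero exceptional set from part (1), this proves $s(N)=s$ for generic $N\in\Gr(1,M)$.

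The main obstacle I anticipate is the explicit, semi-continuous description of $\sum_j(N\cap L_j)$ in part (2): the individual submodules $N\cap L_j$ are defined as kernels of $A$-dependent linear maps, and a uniform polynomial basis exists only after refining each $U_W$ into smaller open subcharts indexed by a choice of pivot columns in $B_j-AC_j$. Gluing these sub-chart computations into a global statement, and verifying that the rank of the sum is genuinely a Zariski-algebraic function of $A$ (rather than merely constructible), is the delicate step. A clean workaround is to replace $\sum_j(N\cap L_j)$ by the kernel of the induced map $\bigoplus_j L_j\to\bigoplus_j(L_j/(N\cap L_j))$ composed with addition into $M$, whose rank admits a determinantal description directly in $A$, and then feed this back into the minor argument above.
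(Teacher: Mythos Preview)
Your overall strategy matches the paper's: work chart-by-chart via Lemmas \ref{lem_measure_calculation} and \ref{lem_covering}, express the bad locus as the zero set of polynomials in the chart coordinates, and invoke the fact (Lemma \ref{lem_polynomial} in the paper) that a nonzero polynomial over $\Z_p$ vanishes on a closed set of Haar measure zero. For part (1) your argument is essentially correct and close to the paper's; the paper is slightly more economical, first reducing the case $i'>i$ to $i'=i$ by enlarging each $L_j$ to a rank-$i'$ submodule, and then for each $j$ writing down a \emph{single} determinant (via Schur complement) rather than a system of $i\times i$ minors.

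For part (2), you have correctly located the crux and the difficulty, but your proposed workaround is circular as written: the kernel of $\bigoplus_j L_j \to \bigoplus_j L_j/(N\cap L_j)$ is exactly $\bigoplus_j(N\cap L_j)$, so describing its image under the addition map is the very problem you are trying to bypass. The paper dissolves this obstacle with an explicit elementary trick. Fix a basis $\{b^{(j,\nu)}\}_\nu$ of $L_j$ and write the linear form $q_N|_{L_j}$ in this basis; its coefficients $c_\nu=\sum_k b_k^{(j,\nu)}\alpha_k$ are linear in $\alpha$. Then the ``$2\times 2$'' elements
\[
c_{\nu'}\, b^{(j,\nu)} - c_{\nu}\, b^{(j,\nu')}
\]
visibly lie in $N\cap L_j$, and as soon as some $c_{\nu_0}\neq 0$ the subfamily with $\nu'=\nu_0$ is linearly independent of cardinality $\rank L_j-1$, hence spans a finite-index submodule of $N\cap L_j$. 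Consequently $\sum_j(N\cap L_j)$ contains, with finite index, the column span of an explicit matrix whose entries are \emph{linear} in $\alpha$, and $s(N_\alpha)=d-1-\rank(\text{that matrix})$. From here your minor argument goes through verbatim: pick one $\alpha'$ realizing the minimum $s$, extract a nonvanishing $(d-1-s)$-minor, and apply Lemma \ref{lem_polynomial}.

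One further small difference: to propagate non-emptiness of $\{N:s(N)=s\}$ from one chart to all charts, the paper does not invoke irreducibility of the $\Q_p$-Grassmannian. Instead it simply observes that any two charts $U_W,U_{W'}$ share a common $\Z_p$-point (for instance the hyperplane $\sum_k x_k=0$), so genericity on one chart forces the good set to meet every other chart, after which the claim is reapplied there.
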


In the following lemma, $K^{(i)}$ always denotes a $\Z_p^i$-extension of $k$.

\begin{lem}\label{lem_induction}
Let $k$ be a number field and let $d, d'$, and $d''$ be positive integers with $d'' \leq d' \leq d$.
Let $K^{(d)}$ be a $\Z_p^d$-extension of $k$.
Let $P$ (resp. $Q$) be a property of $\Z_p^{d'}$-extensions (resp. $\Z_p^{d''}$-extensions) of $k$.
Suppose
\begin{enumerate}
\item[$(a)$] $P(K^{(d')})$ for weakly almost all $K^{(d')} \subset K^{(d)}$, and
\item[$(b)$] for any $K^{(d')} \subset K^{(d)}$, $P(K^{(d')})$ implies $Q(K^{(d'')})$ for weakly almost all $K^{(d'')} \subset K^{(d')}$.
\end{enumerate}
Then $Q(K^{(d'')})$ for weakly almost all $K^{(d'')} \subset K^{(d)}$.
\end{lem}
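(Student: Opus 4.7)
The plan is a Fubini-style decomposition on a flag variety inside $\Aut(M)$, where $M = \Gal(K^{(d)}/k)$. Fix a reference flag $N_0' \subset N_0$ with $N_0 \in \Gr(d'', M)$ and $N_0' \in \Gr(d', M)$, and introduce
\[
F = \{(N, N') \in \Gr(d'', M) \times \Gr(d', M) : N' \subset N\}.
\]
Pushing the normalized Haar measure of $\Aut(M)$ forward along $g \mapsto (g(N_0), g(N_0'))$ gives an $\Aut(M)$-invariant probability measure $\mu$ on $F$; the two projections $\pi_1 : F \to \Gr(d'', M)$ and $\pi_2 : F \to \Gr(d', M)$ then send $\mu$ to positive constant multiples of the Grassmann measures $\mu_1$ and $\mu_2$ of Section \ref{sec_grassmann}.

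Next I would establish a disintegration of $\mu$ along each projection, using $\Aut(M)$-transitivity on the fibers and Weil's formula for quotients of Haar measures. Concretely, for any measurable $\tilde E \subset F$,
\[
\mu(\tilde E) = c_1 \int_{\Gr(d'', M)} \mu_N\bigl(\tilde E \cap \pi_1^{-1}(N)\bigr)\, d\mu_1(N) = c_2 \int_{\Gr(d', M)} \mu_{N'}\bigl(\tilde E \cap \pi_2^{-1}(N')\bigr)\, d\mu_2(N'),
\]
with $\mu_N, \mu_{N'}$ the natural invariant probability measures on the fibers, and the standard Fubini statement that for $\mu_1$-a.e.\ $N$ (resp.\ $\mu_2$-a.e.\ $N'$) the section is measurable. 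The crucial identification here is that the $\pi_2$-fiber over $N'$ is canonically homeomorphic to $\Gr(d'', M/N')$ carrying $\mu_{N'}$; by uniqueness of the $\Aut(M/N')$-invariant probability measure on this compact homogeneous space, $\mu_{N'}$ coincides with the Grassmann measure implicit in the statement of (b).

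With the decomposition in place, the lemma falls out quickly. Take a measurable $E \subset B := \{N \in \Gr(d'', M) : \neg Q(N)\}$ and set $\tilde E = \pi_1^{-1}(E)$. Since the $\pi_1$-fiber over any $N \in E$ is entirely contained in $\tilde E$ and has $\mu_N$-measure $1$, the first displayed equality gives $\mu(\tilde E) = c_1 \mu_1(E)$. For the second equality, pick any $N'$ satisfying $P(N')$ for which the section $\tilde E \cap \pi_2^{-1}(N')$ is measurable: this section sits inside $\{N \supset N' : \neg Q(N)\}$, and hypothesis (b) then forces its $\mu_{N'}$-measure to vanish. Consequently the measurable set $D := \{N' : \mu_{N'}(\tilde E \cap \pi_2^{-1}(N')) > 0\}$, after excising the $\mu_2$-null set where the section is not measurable, lies inside the bad locus $A' := \{N' : \neg P(N')\}$; hypothesis (a) then forces $\mu_2(D) = 0$, so $\mu(\tilde E) = 0$ and therefore $\mu_1(E) = 0$, proving the lemma.

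I expect the main obstacle to be the disintegration step, specifically checking that the $\pi_2$-fiber measure honestly matches the Grassmann measure of Section \ref{sec_grassmann} on $\Gr(d'', M/N')$ that is implicit in (b). Once the measures are correctly matched, the remainder is bookkeeping that exploits the fact that ``weakly almost all'' is an inner-measure condition and is therefore preserved under restriction to measurable subsets.
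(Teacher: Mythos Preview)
Your approach is correct and conceptually cleaner than the paper's, though it trades elementary computation for heavier machinery.

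The paper's proof works entirely in local coordinates. After choosing a basis of $M$, it parametrizes an open chart of $\Gr(d'', M)$ by $M_{d'', d-d''}(\Z_p)$ via a map $\varphi$, and similarly charts of $\Gr(d', M)$ and of $\Gr(d'', M/\psi(B))$ by matrix spaces $M_{d', d-d'}(\Z_p)$ and $M_{d'', d'-d''}(\Z_p)$. The flag set $F$ you introduce appears only implicitly, as the product $M_{d', d-d'}(\Z_p) \times M_{d'', d'-d''}(\Z_p)$ parametrizing pairs $(B, C)$ with $B$ giving $N'$ and $C$ giving $N/N'$. Three short claims then do all the work: the first is an ordinary Fubini argument showing that $\{(B, C) : B \in \BB,\ C \in \CC_B\}$ is weakly large (this is your $\pi_2$-disintegration step combined with hypotheses (a) and (b)); the second shows that the explicit change of variables $\theta(B_1, B_2, C) = (B_1, B_2 - CB_1, C)$ is a measure-preserving homeomorphism (this replaces your matching of the $\pi_1$- and $\pi_2$-fiber measures); and the third shows that projection to a factor preserves weak largeness. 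Everything reduces to textbook Fubini for product Haar measures on free $\Z_p$-modules of finite rank, so no disintegration theorem or Weil formula is ever invoked.

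Your flag-variety formulation is the natural coordinate-free packaging of the same argument. The main cost is the step you correctly flag as the crux: checking that the $\pi_2$-fiber measure really is the Grassmann measure on $\Gr(d'', M/N')$. This follows because the stabilizer of $N'$ in $\Aut(M)$ surjects onto $\Aut(M/N')$, so the pushforward of its Haar measure to $\Gr(d'', M/N')$ is $\Aut(M/N')$-invariant and hence agrees with the Grassmann measure by uniqueness; but writing this down cleanly needs either the disintegration theorem for Polish spaces or Weil's integration formula for iterated compact quotients. The payoff is a proof that would transfer verbatim to any compact group acting transitively on a two-step flag variety. The paper's version buys complete self-containment and avoids any measurability issues beyond Fubini on $\Z_p^n$, at the price of the somewhat opaque matrix bookkeeping in the change-of-variables claim.
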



\section{Numbers $s(k)$ and $s'(k)$}\label{sec_observation}
Let $k$ be a number field.
In this section, we define non-negative integers $s(k)$ and $s'(k)$ concerning the ramifications and the decompositions of primes, respectively.
In fact $s'(k) = 0$ conjecturally.
They will appear in Theorem \ref{thm_main}.

Before the main argument, we recall here some facts from class field theory.
We denote by $S_p(k)$ the set of all primes of $k$ above $p$.
For a subset $S$ of $S_p(k)$, let $M_S(k)$ be the maximal $S$-ramified abelian pro-$p$ extension of $k$.
Let $E_k$ be the unit group of $k$.
For a prime $\pe \in S$, let $k_{\pe}$ be the completion of $k$ at $\pe$,
$U_{\pe}$ the unit group of $k_{\pe}$, 
and $U_{\pe}^{(1)} \subset U_{\pe}$ the principal unit group.
Then we have a natural diagonal map $E_k \to \prod_{\pe \in S} U_{\pe}$.
This map is extended to $E_k \otimes \Z_p \to \prod_{\pe \in S} U_{\pe}^{(1)}$ and 
we will denote the cokernel by $\prod_{\pe \in S} U_{\pe}^{(1)} \bigg/ (E_k \otimes \Z_p)$.

\begin{thm}[see {\cite[Corollary 13.6]{washington}}]\label{thm_cft}
For a number field $k$ and a set $S \subset S_p(k)$, we have
\[
\Gal(M_S(k)/L(k)) \simeq  \prod_{\pe \in S} U_{\pe}^{(1)}  \bigg/ (E_k \otimes \Z_p)
\]
via the Artin map.
For $\pe \in S$, the inertia group of $\pe$ in $\Gal(M_S(k)/L(k))$ corresponds to the image of $U_{\pe}^{(1)}$ in the right hand side.
\end{thm}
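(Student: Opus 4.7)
The plan is to derive this from global class field theory, applied to the tower $k \subset L(k) \subset M_S(k)$. I would start with the short exact sequence
\[
1 \to \Gal(M_S(k)/L(k)) \to \Gal(M_S(k)/k) \to \Gal(L(k)/k) \to 1
\]
and identify each Galois group with a suitable quotient of the idele class group $C_k$ via the Artin reciprocity map, restricted to the maximal pro-$p$ abelian quotient.

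The second step is to write down the idelic description of these two abelian pro-$p$ extensions. Namely, $\Gal(L(k)/k)$ is the maximal pro-$p$ quotient of $C_k$ modulo the closure of the image of \emph{all} local unit groups (together with the connected components at archimedean primes), which recovers the pro-$p$ part of the ideal class group. By contrast, $\Gal(M_S(k)/k)$ is the maximal pro-$p$ quotient of $C_k$ modulo the analogous subgroup in which the local units at primes $\pe \in S$ are \emph{not} quotiented out, since those are precisely the places where ramification is permitted. Comparing the two quotients, the kernel $\Gal(M_S(k)/L(k))$ is the image of $\prod_{\pe \in S} U_\pe$ under the Artin map.

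Third, since we are in the pro-$p$ setting and $S \subset S_p(k)$, only the pro-$p$ part of each $U_\pe$ contributes, which is precisely $U_\pe^{(1)}$. This produces a surjection $\prod_{\pe \in S} U_\pe^{(1)} \twoheadrightarrow \Gal(M_S(k)/L(k))$. Its kernel consists of those tuples of principal units which are, modulo $k^\times$, trivial at every place outside $S$ — equivalently, tuples coming from the diagonal image of the global units $E_k$. Passing to the pro-$p$ completion, this kernel is exactly the image of $E_k \otimes \Z_p \to \prod_{\pe \in S} U_\pe^{(1)}$, giving the displayed isomorphism.

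Finally, the inertia statement follows from local class field theory combined with the compatibility of the local and global Artin maps: the local reciprocity map $k_\pe^\times \to \Gal(k_\pe^{\mathrm{ab}}/k_\pe)$ carries $U_\pe$ onto the inertia subgroup, and hence $U_\pe^{(1)}$ onto the inertia subgroup in the maximal pro-$p$ abelian extension, which is exactly the image at $\pe$ inside $\Gal(M_S(k)/L(k))$. The main obstacle is the careful bookkeeping of the idelic quotient, especially the treatment of the archimedean components and the passage to the maximal pro-$p$ quotient, together with a clean verification that the kernel of the surjection from $\prod_{\pe \in S} U_\pe^{(1)}$ is indeed the diagonal image of $E_k \otimes \Z_p$.
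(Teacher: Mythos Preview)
The paper does not give its own proof of this theorem; it is stated with a reference to \cite[Corollary 13.6]{washington} and used as a black box. Your idelic sketch is the standard argument and matches the approach in Washington, so there is nothing to compare.
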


Note that $\Gal(M_{S_p(k)}(k)/\tilk)$ is the torsion part of $\Gal(M_{S_p(k)}(k)/k)$ as a finitely generated $\Z_p$-module.

We also introduce the following notations.
When $F'$ is an abelian extension of an algebraic extension $F$ of $\Q$ and $\pe$ is a finite prime of $F$,
we denote by $I_{\pe}(F'/F)$ and $D_{\pe}(F'/F)$ the inertia group and the decomposition group of $\pe$ in $\Gal(F'/F)$, respectively.
Moreover, if $F_1$ is an intermediate field of $F'/F$,
we denote by $I_{\pe}(F'/F_1)$ and $D_{\pe}(F'/F_1)$ the inertia group and the decomposition group of a prime of $F_1$ above $\pe$ in $\Gal(F'/F_1)$, respectively.
The definition is independent of the choice of the prime of $F_1$ above $\pe$ and
in fact $I_{\pe}(F'/F_1) = \Gal(F'/F_1) \cap I_{\pe}(F'/F)$ and $D_{\pe}(F'/F_1) = \Gal(F'/F_1) \cap D_{\pe}(F'/F)$.

We begin the main argument.
Let $S_p(k) = \{ \pe_1, \dots, \pe_r \}$.
Our main task in the rest of this section is to apply Lemma \ref{lem_rank} to the three objects:
\begin{enumerate}
\item[(A)] $M = \Gal(\tilk / k), i = 1$, and $L_j = I_{\pe_j}(\tilk/k)$ $(1 \leq j \leq r)$.
\item[(B)] $M = \Gal(\tilk / k), i = 1$, and $L_j = D_{\pe_j}(\tilk/k)$ $(1 \leq j \leq r)$.
\item[(C)] $M = \Gal(\tilk / k), i = 2$, and $L_j = D_{\pe_j}(\tilk/k)$ $(1 \leq j \leq r)$.

More generally, $M = \Gal(K^{(d)} / k), i = 2$, and $L_j = D_{\pe_j}(K^{(d)}/k)$ $(1 \leq j \leq r)$, where $K^{(d)}/k$ is a $\Z_p^d$-extension of $k$.
\end{enumerate}

Note that for every $j$, the prime $\pe_j$ is ramified in the cyclotomic $\Z_p$-extension $k^{\cyc}$ of $k$ and thus
we have 
\[
\rank_{\Z_p} D_{\pe_j}(\tilk/k) \geq \rank_{\Z_p} I_{\pe_j}(\tilk/k) \geq 1.
\]

\subsection*{Applying to (A)}
Recall that we denote by $\EE(k)$ the set of all $\Z_p$-extensions of $k$.
Put
\[
\EE_{\ram}(k) = \{K \in \EE(k) \mid \text{every $\pe \in S_p(k)$ is ramified in $K/k$} \}.
\]

\begin{defn}\label{def_s_ram}
For $K \in \EE(k)$, put $s(K/k) = \rank_{\Z_p} X(K)_{\Gal(K/k)}$.
Furthermore let $s(k)$ be the minimum of $s(K/k)$ where $K \in \EE_{\ram}(k)$.
\end{defn}

The number $s(k)$ gives a trivial lower bound of the size of $X(K)$ in a sense and
Theorem \ref{thm_main} claims that $X(K)$ of generic $K$ reaches this lower bound.

\begin{prop}\label{prop_almost_ram}
\begin{enumerate}
\item[$(1)$] $K \in \EE_{\ram}(k)$ for generic $K \in \EE(k)$.
\item[$(2)$] We have the equality $s(K/k) = \rank_{\Z_p} \Gal(\tilk \cap L(K)/K)$.
In particular, $s(k) \leq d(k)-1$.
\item[$(3)$] $s(K/k) = s(k)$ for generic $K \in \EE(k)$.
\end{enumerate}
\end{prop}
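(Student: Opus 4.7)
The plan is to translate all three assertions into the language of the $p$-adic Grassmann manifold and apply Lemma \ref{lem_rank} to a well-chosen family. Write $S_p(k) = \{\pe_1,\dots,\pe_r\}$, set $M := \Gal(\tilk/k)$ (a free $\Z_p$-module of rank $d := d(k)$), and put $L_j := I_{\pe_j}(\tilk/k)$. Each $\pe_j$ ramifies in the cyclotomic $\Z_p$-extension $k^{\cyc} \subset \tilk$, so $\rank_{\Z_p} L_j \geq 1$. A $\Z_p$-extension $K$ of $k$ corresponds to $N := \Gal(\tilk/K) \in \Gr(1, M)$ with $M/N = \Gal(K/k)$, and $\pe_j$ ramifies in $K/k$ exactly when the image of $L_j$ in $M/N$ is non-zero, i.e.\ $\rank_{\Z_p}(\Image(L_j \to M/N)) = 1$. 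Thus (1) follows from Lemma \ref{lem_rank}(1) applied with $i = i' = 1$.

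For (2), I would first identify $X(K)_{\Gal(K/k)}$ with $\Gal(M_K/K)$, where $M_K$ denotes the maximal pro-$p$ abelian extension of $k$ unramified over $K$. Writing $G := \Gal(L(K)/k)$, one has $M_K = L(K)^{[G,G]}$ and $[G,G] = [X(K), G]$ inside $X(K)$, the non-trivial inclusion following because the central extension of $\Gal(K/k) \simeq \Z_p$ by $X(K)/[X(K), G]$ splits, as $\Z_p$ has pro-$p$ cohomological dimension $1$. Since $M_K \subset L(K)$, and $\tilk \cap L(K)$ is pro-$p$ abelian over $k$ and unramified over $K$, we have $M_K \cap \tilk = \tilk \cap L(K)$, whence $\Gal(M_K / \tilk \cap L(K)) \simeq \Gal(M_K \cdot \tilk / \tilk)$ is $\Z_p$-torsion, because by definition $\tilk$ contains every $\Z_p$-extension of $k$. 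Therefore
\[
s(K/k) = \rank_{\Z_p} \Gal(M_K / K) = \rank_{\Z_p} \Gal(\tilk \cap L(K)/K).
\]
The bound $s(k) \leq d-1$ then follows since $\Gal(\tilk \cap L(K)/K)$ is a quotient of $\Gal(\tilk/K) \simeq \Z_p^{d-1}$ and $k^{\cyc} \in \EE_{\ram}(k)$.

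For (3), using that $\tilk/k$ is abelian we have $N \cap L_j = I_{\pe_j}(\tilk/K)$, and because $\tilk/k$ is unramified outside $S_p(k)$, the fixed field in $\tilk$ of $\sum_{j=1}^r (N \cap L_j)$ is the maximal subfield of $\tilk$ unramified over $K$, namely $\tilk \cap L(K)$. Combined with (2), this gives
\[
s(K/k) = \rank_{\Z_p}\left( N \bigg/ \sum_{j=1}^r (N \cap L_j) \right) = s(N)
\]
in the notation of Lemma \ref{lem_rank}(2). Since $\EE_{\ram}(k)$ corresponds exactly to those $N \in \Gr(1, M)$ satisfying $\rank_{\Z_p}(\Image(L_j \to M/N)) = 1$ for all $j$, the number $s(k)$ of Definition \ref{def_s_ram} coincides with the invariant $s$ of that lemma, and Lemma \ref{lem_rank}(2) yields (3). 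The main technical point is the identification in (2) linking the Galois coinvariants $X(K)_{\Gal(K/k)}$ with the geometric quantity $\Gal(\tilk \cap L(K)/K)$; the remaining content is a direct application of the Grassmann-manifold framework built in Sections \ref{sec_grassmann} and \ref{sec_lemmas}.
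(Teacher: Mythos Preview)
Your proof is correct and follows essentially the same approach as the paper: both apply Lemma~\ref{lem_rank} to $M = \Gal(\tilk/k)$ with $L_j = I_{\pe_j}(\tilk/k)$, identify $s(K/k)$ with $s(N)$ via the formula $\Gal(\tilk \cap L(K)/K) = N/\sum_j (N \cap L_j)$, and invoke parts (1) and (2) of that lemma. The only notable difference is in part~(2): the paper observes directly that $\LL = M_K \subset M_{S_p(k)}(k)$ and uses the known finiteness of $M_{S_p(k)}(k)/\tilk$, whereas you argue that $\Gal(M_K \tilk/\tilk)$ is torsion straight from the maximality of $\tilk$; both routes yield the same rank equality.
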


\begin{proof}
(1) For any $K$ and $1 \leq j \leq r$, $\pe_j$ is ramified in $K/k$ if and only if $\rank_{\Z_p} I_{\pe}(K/k)=1$.
Since $I_{\pe}(K/k)$ is the image of $I_{\pe}(\tilk/k)$ under the restriction map $\Gal(\tilk/k) \to \Gal(K/k)$,
the assertion follows from Lemma \ref{lem_rank} (1) applied to (A).

(2) Since $\Gal(K/k)$ is pro-cyclic, we have $X(K)_{\Gal(K/k)} = \Gal(\LL/K)$,
where $\LL$ is the maximal abelian extension of $k$ contained in $L(K)$.
It is clear that $\LL \subset M_{S_p(k)}(k)$.
Since $M_{S_p(k)}(k)/\tilk$ is a finite extension, 
$\LL \supset \tilk \cap \LL = \tilk \cap L(K)$ is also a finite extension and we obtain the assertion.

(3) By the definition of $L(K)$, we have
\begin{eqnarray*}
\Gal(\tilk \cap L(K)/K) &=& \Gal(\tilk/K) \bigg/ \left(\sum_{j = 1}^r I_{\pe_j}(\tilk/K) \right)\\
 &=& \Gal(\tilk/K) \bigg/ \left(\sum_{j = 1}^r \left( \Gal(\tilk/K) \cap I_{\pe_j}(\tilk/k) \right) \right).
\end{eqnarray*}
Then the assertion follows from (2) and Lemma \ref{lem_rank} (2) applied to (A).
\end{proof}

\if0The aim of this paper is to give a smallness of $X(K)$.
It is known that in some cases $X(K)$ is not pseudo-null (or equivalently infinite) 
for generic $K$.
For example, if $k$ is an imaginary quadratic field and $p$ splits in $k$, then there are at most two $K$ which satisfy that $X(K)$ is finite.
The two exceptions are such that one of the two primes above $p$ does not ramify.
This fact forces us to consider the following number $s(k)$.
\fi

\begin{eg}\label{eg_s_ram}
\begin{enumerate}
\item If $p$ splits completely in $k/\Q$, then since every $\pe \in S_p(k)$ has degree one, 
Theorem \ref{thm_cft} implies that $\rank_{\Z_p} I_{\pe}(\tilk/k)=1$.
Hence for every $K \in \EE_{\ram}(k)$, $\tilk/K$ is unramified and $s(K/k) = d(k)-1$.
Consequently $s(k) = d(k)-1$.

\item On contrast, if $p$ does not split in $k/\Q$, 
then $\rank_{\Z_p} I_{\pe}(\tilk/k) = d(k)$ for the only one prime $\pe \in S_p(k)$.
Hence every $\Z_p$-extension $K$ satisfies $s(K/k) = 0$
and consequently $s(k) = 0$.

\item As a consequence of above two examples, if $k$ is an imaginary quadratic field, 
$s(k)$ coincides with the $s$ in Theorem \ref{thm_ozaki}.

\item Let $k$ be a complex cubic field.
Since $\rank_{\Z} E_k = 1$, Leopoldt's Conjecture trivially holds and $d(k)=2$.
We shall show that $s(k)=1$ if $p$ splits completely in $k$ and $s(k)=0$ otherwise.
The remained case is $\sharp S_p(k)=2$, so
let $S_p(k) = \{ \pe_1, \pe_2 \}$ with $\deg \pe_1 = 1$.
Then Theorem \ref{thm_cft} implies that $\rank_{\Z_p} I_{\pe_2}(\tilk/k)=2$.
Hence we obtain $s(K/k)=0$ for any $K \in \EE(k)$.

\item If $k$ is a totally imaginary quartic field,
then $d(k)=3$ and, $s(k)=2$ if $\sharp S_p(k)=4$, $s(k)=1$ if $\sharp S_p(k)=3$, and $s(k)=0$ otherwise.
The proof is done in the similar way as the previous example, so we omit it.

Note that, in case $S_p(k) = \{ \pe_1, \pe_2\}$ with $\deg \pe_1 = \deg \pe_2 = 2$,
$s(K/k)$ is not constant for $K \in \EE_{\ram}(k)$.
Indeed, let $K_i$ be the unique $\{\pe_i\}$-ramified $\Z_p$-extension of $k$ for $i=1,2$,
whose unique existence is assured by Theorem \ref{thm_cft}.
If $K \subset K_1K_2$, then $K_1K_2/K$ is unramified and $\tilk/K_1K_2$ is not, hence $s(K/k) = 1$.
On contrast, if $K \not\subset K_1K_2$, then $s(K/k) = 0$.
To see this, assume contrary there exists an unramified $\Z_p$-extension $K^{(2)}$ of $K$ contained in $\tilk$.
Then $K^{(2)}$ must contain both $K_1$ and $K_2$, which leads to $K_1K_2 \supset K$, a contradiction.
\end{enumerate}
\end{eg}

\subsection*{Applying to (B)}
The general theory proceeds completely in parallel with (A).
For an algebraic extension $F$ of $\Q$, let $L'(F)$ be the maximal unramified pro-$p$ abelian extension of $F$ in which every prime of $F$ above $p$ splits completely
and let $X'(F)$ be the Galois group $\Gal(L'(F)/F)$.
Obviously we have $L'(F) \subset L(F)$.

Put
\[
\EE_{\sfin}(k) = \{K \in \EE(k) \mid \text{every $\pe \in S_p(k)$ splits finitely in $K/k$} \} \supset \EE_{\ram}(k).
\]
Similarly as in Definition \ref{def_s_ram}, for $K \in \EE(k)$ we put $s'(K/k) = \rank_{\Z_p} X'(K)_{\Gal(K/k)}$
and let $s'(k)$ be the minimum of $s'(K/k)$ where $K \in \EE_{\sfin}(k)$.
The following proposition can be obtained exactly in the same manner as Proposition \ref{prop_almost_ram},
applying Lemma \ref{lem_rank} to (B).

\begin{prop}\label{prop_almost_dec}
\begin{enumerate}
\item[$(1)$] $K \in \EE_{\sfin}(k)$ for generic $K \in \EE(k)$.
\item[$(2)$] We have the equality $s'(K/k) = \rank_{\Z_p} \Gal(\tilk \cap L'(K)/K)$.
In particular, $s'(k) \leq d(k)-1$.
\item[$(3)$] $s'(K/k) = s'(k)$ for generic $K \in \EE(k)$.
\end{enumerate}
\end{prop}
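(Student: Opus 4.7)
The plan is to transpose the proof of Proposition~\ref{prop_almost_ram} verbatim, applying Lemma~\ref{lem_rank} to setup (B) instead of (A). The dictionary is to replace the inertia groups $I_{\pe_j}(\tilk/k)$ by the decomposition groups $D_{\pe_j}(\tilk/k)$, the field $L(K)$ by $L'(K)$, the module $X(K)$ by $X'(K)$, and the condition ``$\pe_j$ is ramified in $K/k$'' by ``$\pe_j$ splits finitely in $K/k$.'' The hypothesis of Lemma~\ref{lem_rank} is still verified because $\rank_{\Z_p} D_{\pe_j}(\tilk/k) \geq \rank_{\Z_p} I_{\pe_j}(\tilk/k) \geq 1$, as observed in the excerpt.

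For (1), I would note that $\pe_j$ splits finitely in a $\Z_p$-extension $K/k$ iff the decomposition group $D_{\pe_j}(K/k) \subset \Gal(K/k) \cong \Z_p$ is nontrivial, equivalently has $\Z_p$-rank one. Since $D_{\pe_j}(K/k)$ is the image of $D_{\pe_j}(\tilk/k)$ under the restriction $\Gal(\tilk/k) \to \Gal(K/k)$, Lemma~\ref{lem_rank}(1) applied to (B) delivers the genericity.

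For (2), since $\Gal(K/k)$ is pro-cyclic, $X'(K)_{\Gal(K/k)} = \Gal(\LL'/K)$ where $\LL'$ is the maximal subfield of $L'(K)$ abelian over $k$. As $\LL'/k$ is abelian pro-$p$ and unramified outside $S_p(k)$ (since $L'(K)/K$ is unramified and $K/k$ is unramified outside $p$), we have $\LL' \subset M_{S_p(k)}(k)$; the remark after Theorem~\ref{thm_cft} then gives that $M_{S_p(k)}(k)/\tilk$ is finite, whence $\LL'/(\tilk \cap L'(K))$ is finite and the ranks over $K$ coincide. The bound $s'(k) \leq d(k)-1$ is immediate from $\rank_{\Z_p}\Gal(\tilk/K) = d(k)-1$.

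For (3), because an abelian extension of $K$ sits inside $L'(K)$ iff every prime of $K$ above $p$ has trivial decomposition group in it, and using $I_{\pe_j} \subset D_{\pe_j}$, I would write
\[
\Gal(\tilk \cap L'(K)/K) = \Gal(\tilk/K) \bigg/ \sum_{j=1}^r \bigl(\Gal(\tilk/K) \cap D_{\pe_j}(\tilk/k)\bigr).
\]
Lemma~\ref{lem_rank}(2) applied to (B) then gives, for generic $K$, that this rank equals the minimum $s$ defined there. The final step is to identify $s$ with $s'(k)$: the constraint ``$\rank_{\Z_p}(\Image(L_j \to M/N)) = 1$'' in the lemma is, by the observation in (1), exactly the condition $K \in \EE_{\sfin}(k)$, and under that condition the lemma's $s(N)$ coincides with $s'(K/k)$ by the displayed formula. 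The only point requiring a modicum of care is the reduction in (3) which generates $\Gal(\tilk/\tilk \cap L'(K))$ by decomposition groups alone (rather than by inertia and decomposition groups separately); everything else is a mechanical translation of the earlier argument.
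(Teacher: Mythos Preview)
Your proposal is correct and matches the paper's own approach exactly: the paper simply states that the proposition ``can be obtained exactly in the same manner as Proposition~\ref{prop_almost_ram}, applying Lemma~\ref{lem_rank} to (B),'' and your write-up carries out precisely that translation. Your remark that in~(3) only decomposition groups (not inertia groups) are needed---since complete splitting already forces unramifiedness---is the one point where the dictionary is not purely formal, and you have handled it correctly.
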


On contrast to $s(k)$, conjecturally $s'(k)$ vanishes.
More precisely, we have the following conjecture (see \cite[Remarques (i) after Proposition 6]{jaulent}).

\begin{conj}[{Generalized Gross' Conjecture}]\label{conj_gross}
$X'(k^{\cyc})_{\Gal(k^{\cyc}/k)}$ is finite, in other words,  $s'(k^{\cyc}/k) = 0$.
\end{conj}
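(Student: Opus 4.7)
The statement is the generalized Gross' conjecture (sometimes called Gross--Kuz'min), which is a well-known open problem; my plan is therefore not to give an unconditional proof but to reduce it to the standard reformulation in terms of a $p$-adic regulator of $p$-units of $k$, identify the cases where this regulator can be shown non-zero, and indicate the transcendence obstruction that remains in the general case.

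First I would translate the statement via class field theory. Since $\Gal(k^{\cyc}/k)$ is pro-cyclic, the coinvariant module satisfies $X'(k^{\cyc})_{\Gal(k^{\cyc}/k)} = \Gal(\LL'/k^{\cyc})$, where $\LL'$ denotes the maximal subfield of $L'(k^{\cyc})$ that is abelian over $k$. Since $\LL'/k$ is abelian, pro-$p$, and unramified outside $p$, and since by construction the primes above $p$ in $k^{\cyc}$ split completely in $\LL'/k^{\cyc}$, one has $\LL' \subset M_{S_p(k)}(k)\cdot k^{\cyc}$. Applying Theorem \ref{thm_cft} with $S = S_p(k)$ and using that the decomposition (resp.\ inertia) group of $\pe$ in $\Gal(M_{S_p(k)}(k)/L(k))$ is the image of $U_\pe^{(1)}$, one identifies $\Gal(\LL'/k^{\cyc})$ with the cokernel of a map out of the $\Z_p$-module of $p$-units of $k$ (modulo torsion) into an explicit quotient of $\prod_{\pe \mid p} U_\pe^{(1)}$ cut out by the cyclotomic character.

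Second, after chasing through this identification and taking $p$-adic logarithms, the finiteness of the coinvariant module becomes equivalent to the non-vanishing of a determinant formed from the $p$-adic logarithms of $p$-units of $k$ --- a Gross-type $p$-adic regulator. In the case where $k$ is abelian over $\Q$, one can choose a basis of $p$-units from cyclotomic units, whose $p$-adic logarithms are expressible in terms of values of $p$-adic $L$-functions; the non-vanishing then follows from Brumer's $p$-adic analogue of Baker's theorem on linear independence of $p$-adic logarithms of algebraic numbers. This is the classical Greenberg argument, and establishes the conjecture in the abelian case.

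The hard part, and the reason the statement remains a conjecture in the paper rather than a theorem, is that for a general number field $k$ no analogue of Brumer--Baker is available: the $p$-adic logarithms of $p$-units of $k$ are only known to be $\Q$-linearly independent, not $\overline{\Q}$-linearly independent, so one cannot a priori rule out a transcendental coincidence that would force the regulator to vanish. Any unconditional proof in the general case would require either a new transcendence input adapted to non-abelian extensions --- well beyond what is currently available --- or a genuinely different strategy, for instance via an Iwasawa main-conjecture style argument bypassing transcendence altogether; both directions are major open problems, which is why we record the statement as Conjecture \ref{conj_gross} and use it only conditionally in the sequel.
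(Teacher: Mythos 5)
This statement is Conjecture \ref{conj_gross} (the Gross--Kuz'min conjecture): the paper does not prove it and does not claim to, it merely records it and cites Greenberg for the abelian case, using it elsewhere only as a hypothesis (via $s'(k)=0$). Your proposal correctly declines to prove it, and your sketch --- identifying $X'(k^{\cyc})_{\Gal(k^{\cyc}/k)}$ with $\Gal(\LL'/k^{\cyc})$ for $\LL'$ the maximal subfield of $L'(k^{\cyc})$ abelian over $k$, translating via Theorem \ref{thm_cft} into the non-vanishing of a Gross-type $p$-adic regulator of $p$-units, and settling the abelian case by cyclotomic units plus Brumer's $p$-adic Baker theorem --- is precisely the standard argument behind the reference the paper cites, so your account is accurate and consistent with the paper's (non-)treatment; there is no proof in the paper to compare it against.
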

In particular, Conjecture \ref{conj_gross} implies that $s'(k) = 0$.
It is known that Conjecture \ref{conj_gross} holds if $k/\Q$ is abelian (\cite{green4}).

We say that a $\Z_p$-extension $K$ of $k$ is {\it arithmetically semi-simple} 
if $K \in \EE_{\sfin}(k)$ and $s'(K/k) = 0$ (\cite[Definition 7]{jaulent}).
Thus Conjecture \ref{conj_gross} asserts that $k^{\cyc}/k$ is arithmetically semi-simple.
Note that in general there exist $\Z_p$-extensions of $k$ in $\EE_{\sfin}(k)$ which are not arithmetically semi-simple
even if $k/\Q$ is abelian (see \cite{kisilevsky}, for example).
This terminology comes from the following lemma.

\begin{lem}[{\cite[Proposition 6]{jaulent}}]\label{lem_jaulent}
Let $K$ be an arithmetically semi-simple $\Z_p$-extension of $k$
and let $\sigma$ be a generator of $\Gal(K/k)$.
Then the module $X(K)$ is semi-simple at $\sigma-1$.
In other words, if $\bigoplus_i (\Lambda(K/k)/(f_i))$ is an elementary module pseudo-isomorphic to $X(K)$, 
then $(\sigma-1)^2$ does not divide any of $f_i$.
\end{lem}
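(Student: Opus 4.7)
The plan is to reduce the $T$-semi-simplicity of $X(K)$ (where $T := \sigma - 1$) to the corresponding property for a submodule of $X(K)$ built from decomposition groups above $p$, and then to verify it directly via structure theory.

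First, I would recast the desired conclusion: for any finitely generated torsion $\Lambda(K/k)$-module $Y$, the condition that no invariant factor of $Y$ is divisible by $T^2$ is equivalent to $Y[T^\infty]/Y[T]$ being pseudo-null, i.e.\ to $T \cdot Y[T^\infty]$ being pseudo-null. The hypothesis $s'(K/k)=0$ says that $X'(K)/TX'(K)$ is finite, which by the structure theorem for torsion $\Lambda$-modules is equivalent to $T \nmid \ch_\Lambda X'(K)$; in particular $X'(K)[T^\infty]$ is pseudo-null.

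Next, consider the exact sequence
\[
0 \to D \to X(K) \to X'(K) \to 0,
\]
where $D$ is the $\Lambda$-submodule of $X(K)$ generated by the decomposition groups $D_{\Pe}(L(K)/K)$ for the (finitely many, by $K \in \EE_{\sfin}(k)$) primes $\Pe$ of $K$ above $p$. Taking $T^\infty$- and $T$-torsion, the pseudo-nullity of $X'(K)[T^\infty]$ makes the inclusions $D[T^\infty] \hookrightarrow X(K)[T^\infty]$ and $D[T] \hookrightarrow X(K)[T]$ pseudo-isomorphisms, and hence induces a pseudo-isomorphism $D[T^\infty]/D[T] \to X(K)[T^\infty]/X(K)[T]$; so $X(K)$ is $T$-semi-simple if and only if $D$ is.

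To analyze $D$, group the primes of $K$ above $p$ by their images in $k$: for each prime $\pe$ of $k$ above $p$, fix a prime $\pe_1$ of $K$ above $\pe$ and let $\phi_{\pe_1} \in X(K)$ be its Frobenius in $L(K)/K$. Since $K \in \EE_{\sfin}(k)$, the decomposition subgroup $D_\pe(K/k)$ has finite index $p^{n_\pe}$ in $\Gal(K/k) = \langle \sigma \rangle$. Conjugation in a lift to $L(K)/k$ sends $\phi_{\pe_1}$ to $\phi_{\sigma \pe_1}$, and $\sigma^{p^{n_\pe}}$ fixes $\pe_1$, so $(\sigma^{p^{n_\pe}} - 1)\phi_{\pe_1} = 0$. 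Therefore the cyclic submodule $\tilde D_\pe := \Lambda \cdot \phi_{\pe_1} = \sum_{\Pe \mid \pe} D_\Pe(L(K)/K)$ is a quotient of $\Lambda/((T+1)^{p^{n_\pe}} - 1) = \Lambda/(T \cdot \omega_{n_\pe}(T))$, where $\omega_{n_\pe}(T) = \prod_{m=1}^{n_\pe} \Phi_{p^m}(T+1)$ is coprime to $T$ (as $\Phi_{p^m}(1)=p$). Since $T$ and $\omega_{n_\pe}$ are coprime in $\Lambda$, $\Lambda/(T\omega_{n_\pe}) \sim \Lambda/T \oplus \Lambda/\omega_{n_\pe}$ is $T$-semi-simple, and so are each $\tilde D_\pe$ and their direct sum. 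The sum map $\bigoplus_\pe \tilde D_\pe \twoheadrightarrow D$ exhibits $D$ as a quotient of a $T$-semi-simple module.

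The key step that I expect to need to prove carefully is that $T$-semi-simplicity passes to quotients. Given a $T$-semi-simple torsion $\Lambda$-module $Y$ and a quotient $Z = Y/A$, the canonical sequence $0 \to Y[T^\infty] \to Y \to Y_h \to 0$ has $Y_h$ $T$-torsion-free (hence $T \nmid \ch_\Lambda Y_h$), and $T \cdot Y[T^\infty]$ is pseudo-null by the semi-simplicity of $Y$. The induced sequence
\[
0 \to Y[T^\infty]/(Y[T^\infty] \cap A) \to Z \to Y_h/\bar{A} \to 0
\]
has kernel again pseudo-annihilated by $T$, while the quotient $Y_h/\bar{A}$ retains $T$-coprime characteristic, so $(Y_h/\bar{A})[T^\infty]$ is pseudo-null. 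Combining these shows $Z[T^\infty]/Z[T]$ is pseudo-null, i.e.\ $Z$ is $T$-semi-simple. Applying this to $Y = \bigoplus_\pe \tilde D_\pe$ yields the $T$-semi-simplicity of $D$, and hence of $X(K)$.
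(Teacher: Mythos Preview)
The paper does not supply its own proof of this lemma; it is quoted from Jaulent with a citation and no argument, so there is nothing in the paper to compare your attempt against directly.

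Your proof is correct. Reformulating $T$-semi-simplicity as the pseudo-nullity of $T\cdot X(K)[T^\infty]$, passing to the submodule $D$ via $0 \to D \to X(K) \to X'(K) \to 0$ using $s'(K/k)=0$, and realizing $D$ as a quotient of the $T$-semi-simple module $\bigoplus_{\pe} \Lambda(K/k)/(\sigma^{p^{n_\pe}}-1)$ all work as you describe; the computation $\sigma\cdot\phi_{\pe_1}=\phi_{\sigma(\pe_1)}$ is the standard conjugation-of-Frobenius identity in the abelian extension $L(K)/K$. One simplification worth noting: the inheritance of $T$-semi-simplicity under quotients, which you argue through the filtration by $Y[T^\infty]$, becomes immediate after localizing at the height-one prime $(T)$. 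A finitely generated torsion $\Lambda(K/k)$-module $Y$ is $T$-semi-simple precisely when $Y_{(T)}$ is annihilated by $T$, i.e.\ is a vector space over the residue field $\Lambda(K/k)_{(T)}/(T)$, and this visibly passes to subquotients. The same localization also dispatches your final ``combining these'' step in one line, since pseudo-nullity of a $T^\infty$-torsion module is equivalent to vanishing after localization at $(T)$.
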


Proposition \ref{prop_almost_dec} immediately implies the following.

\begin{cor}\label{cor_jaulent}
If $s'(k) = 0$, then generic $\Z_p$-extensions of $k$ are arithmetically semi-simple.
\end{cor}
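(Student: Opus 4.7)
The plan is to derive the corollary as an immediate combination of the two relevant parts of Proposition \ref{prop_almost_dec}. Unpacking the definition just after Conjecture \ref{conj_gross}, a $\Z_p$-extension $K/k$ is arithmetically semi-simple precisely when $K \in \EE_{\sfin}(k)$ and $s'(K/k) = 0$. So the task is to show that, under the hypothesis $s'(k) = 0$, both of these conditions hold for generic $K \in \EE(k)$.

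First I would invoke Proposition \ref{prop_almost_dec}(1), which tells us that generic $K \in \EE(k)$ already lie in $\EE_{\sfin}(k)$; no further work is needed for the ``finite splitting'' condition. Next I would apply Proposition \ref{prop_almost_dec}(3): generic $K \in \EE(k)$ satisfy $s'(K/k) = s'(k)$. Under the hypothesis $s'(k) = 0$, this means generic $K$ satisfy $s'(K/k) = 0$.

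Finally, to combine the two statements, I would cite Lemma \ref{lem_both}(1), which asserts that the conjunction of two generic properties is generic. Applying it to the properties ``$K \in \EE_{\sfin}(k)$'' and ``$s'(K/k) = 0$'' yields that generic $K \in \EE(k)$ satisfy both, i.e.\ are arithmetically semi-simple. There is no real obstacle here; the entire content of the corollary is already packaged in Proposition \ref{prop_almost_dec}, and the proof is essentially a bookkeeping step once one observes that the definition of arithmetic semi-simplicity is exactly the conjunction of the two generic conditions provided by that proposition.
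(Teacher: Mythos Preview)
Your proposal is correct and matches the paper's approach: the paper simply says that Proposition \ref{prop_almost_dec} immediately implies the corollary, and you have spelled out exactly how (using parts (1) and (3) together with Lemma \ref{lem_both}(1)).
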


\subsection*{Applying to (C)}
In order to apply Lemma \ref{lem_rank} (1) to (C), we need the following condition.

\begin{ass}\label{ass_decomp}
For every $\pe \in S_p(k)$, we have $\rank_{\Z_p} D_{\pe}(\tilk/k) \geq 2$.
\end{ass}

Clearly $d(k) \geq 2$ if Assumption \ref{ass_decomp} holds.
Conversely, the author does not know any counter-example of Assumption \ref{ass_decomp} if $d(k) \geq 2$
(see \cite[Remarque 3.3]{nqd}).
We give a sufficient condition for Assumption \ref{ass_decomp}.

\begin{lem}\label{rem_decomp}
If $k/\Q$ is imaginary Galois, then Conjecture \ref{conj_gross} implies Assumption \ref{ass_decomp}.
In particular, if $k/\Q$ is imaginary abelian, then Assumption \ref{ass_decomp} holds.
\end{lem}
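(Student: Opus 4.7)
The plan is to translate Assumption \ref{ass_decomp} into a rank statement about decomposition groups in $\tilk/k^{\cyc}$, use Gross' conjecture to control the sum of these local groups, and then distribute the rank evenly via the Galois action on the primes above $p$. For $\pe \in S_p(k)$ and a prime $\pe'$ of $k^{\cyc}$ above $\pe$, the cyclotomic $\Z_p$-extension is ramified at $\pe$, so $D_{\pe}(k^{\cyc}/k)$ is an open subgroup of $\Gal(k^{\cyc}/k) \cong \Z_p$ and has $\Z_p$-rank $1$. The exact sequence
\[
0 \to D_{\pe'}(\tilk/k^{\cyc}) \to D_{\pe}(\tilk/k) \to D_{\pe}(k^{\cyc}/k) \to 0
\]
then yields $\rank_{\Z_p} D_{\pe}(\tilk/k) = \rank_{\Z_p} D_{\pe'}(\tilk/k^{\cyc}) + 1$, so Assumption \ref{ass_decomp} is equivalent to $\rank_{\Z_p} D_{\pe'}(\tilk/k^{\cyc}) \geq 1$ for every prime $\pe'$ of $k^{\cyc}$ above $p$.

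Next I would let $M \subset \tilk$ be the fixed field of $\sum_{\pe' \mid p} D_{\pe'}(\tilk/k^{\cyc})$, i.e.\ the maximal intermediate field of $\tilk/k^{\cyc}$ in which every prime above $p$ splits completely. Since splitting completely implies being unramified and $\tilk/k$ is unramified outside $p$, one has $M \subset L'(k^{\cyc})$, and combined with $M \subset \tilk$ this gives $M \subset \tilk \cap L'(k^{\cyc})$. By Proposition \ref{prop_almost_dec}(2), Conjecture \ref{conj_gross} forces $\tilk \cap L'(k^{\cyc})/k^{\cyc}$ to be finite. Hence $\sum_{\pe' \mid p} D_{\pe'}(\tilk/k^{\cyc})$ has finite index in $\Gal(\tilk/k^{\cyc})$, and therefore has $\Z_p$-rank $d(k)-1$.

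The final step invokes the Galois hypothesis. Since $k/\Q$ and $\Q^{\cyc}/\Q$ are Galois, so is $k^{\cyc}/\Q$, and $\Gal(k^{\cyc}/\Q)$ acts transitively on the primes of $k^{\cyc}$ above $p$. Moreover $\tilk/\Q$ is Galois because $\tilk$ is a canonical construction from $k$, so $\Gal(\tilk/k^{\cyc})$ is normal in $\Gal(\tilk/\Q)$ and $\Gal(k^{\cyc}/\Q)$ acts on it by conjugation, carrying $D_{\pe'}(\tilk/k^{\cyc})$ to $D_{\sigma \pe'}(\tilk/k^{\cyc})$. Consequently all the $D_{\pe'}(\tilk/k^{\cyc})$ for $\pe' \mid p$ share a common $\Z_p$-rank $t$. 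Because $k$ is imaginary, $r_2(k) \geq 1$, hence $d(k) \geq 2$, so the sum has rank $d(k)-1 \geq 1$; were $t = 0$, each $D_{\pe'}$ would be trivial (as $\Gal(\tilk/k^{\cyc})$ is torsion-free), forcing the sum to vanish and contradicting the previous paragraph. Hence $t \geq 1$, giving Assumption \ref{ass_decomp}. The ``in particular'' clause follows because Conjecture \ref{conj_gross} is known when $k/\Q$ is abelian by \cite{green4}.

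The proof is essentially structural and I expect no truly hard step; the only care needed is in verifying that $\tilk/\Q$ is Galois so that $\Gal(k^{\cyc}/\Q)$ genuinely acts on $\Gal(\tilk/k^{\cyc})$, and in connecting Gross' conjecture to the finiteness of $\tilk \cap L'(k^{\cyc})/k^{\cyc}$ through Proposition \ref{prop_almost_dec}(2).
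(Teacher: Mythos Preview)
Your proof is correct and follows essentially the same route as the paper: both use the Galois hypothesis on $k/\Q$ (and the resulting normality of $\tilk/\Q$) to see that all decomposition groups above $p$ have a common $\Z_p$-rank, and both use Gross' conjecture via Proposition \ref{prop_almost_dec}(2) to force that common rank to be positive. The paper's version is marginally more direct—it argues by contradiction, observing that if some $D_{\pe}(\tilk/k)$ had rank $1$ then all would, whence $\tilk \subset L'(k^{\cyc})$ and $s'(k^{\cyc}/k) = d(k)-1 \geq r_2(k) \geq 1$—but your constructive phrasing via the sum $\sum_{\pe' \mid p} D_{\pe'}(\tilk/k^{\cyc})$ amounts to the same thing.
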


\begin{proof}
Since $k/\Q$ is a Galois extension, the decomposition groups $D_{\pe}(\tilk/k)$ for $\pe \in S_p(k)$ are isomorphic to each other.
Therefore, if Assumption \ref{ass_decomp} fails, then we have $\rank_{\Z_p} D_{\pe}(\tilk/k) = 1$ for every $\pe \in S_p(k)$.
Since $\rank_{\Z_p} D_{\pe}(k^{\cyc}/k) = 1$, we have $D_{\pe}(\tilk/k^{\cyc}) = 0$ for every $\pe \in S_p(k)$, which means that $\tilk \subset L'(k^{\cyc})$.
Then by Proposition \ref{prop_almost_dec} (2), $s'(k^{\cyc}/k) = d(k) - 1 \geq r_2(k) \geq 1$, which contradicts Conjecture \ref{conj_gross}.
\end{proof}

Applying Lemma \ref{lem_rank} (1) to (C), we obtain the following.

\begin{prop}\label{prop_almost_decomp_2}
If Assumption \ref{ass_decomp} holds, 
then $p$ splits finitely in generic $\Z_p^2$-extensions of $k$.
More generally, let $K^{(d)}$ be a $\Z_p^d$-extension of $k$ and $i$ an integer with $2 \leq i \leq d$.
If $\rank_{\Z_p} D_{\pe}(K^{(d)}/k) \geq 2$ for every $\pe \in S_p(k)$, 
then generic $\Z_p^i$-extensions $K^{(i)} \subset K^{(d)}$ of $k$ satisfy the same property. 
\end{prop}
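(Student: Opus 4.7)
The plan is to reduce the proposition to a direct application of Lemma \ref{lem_rank}(1) in setup (C).

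First I would record the dictionary between decomposition groups and quotients. For any $\Z_p^j$-extension $F/k$ and any prime $\pe \in S_p(k)$, the prime $\pe$ splits finitely in $F/k$ if and only if $D_{\pe}(F/k)$ has finite index in $\Gal(F/k)$, which, since $\Gal(F/k) \cong \Z_p^j$, is equivalent to $\rank_{\Z_p} D_{\pe}(F/k) = j$. Moreover, the restriction map $\Gal(K^{(d)}/k) \twoheadrightarrow \Gal(K^{(i)}/k)$ sends $D_{\pe}(K^{(d)}/k)$ onto $D_{\pe}(K^{(i)}/k)$. Writing $M = \Gal(K^{(d)}/k)$, $N = \Gal(K^{(d)}/K^{(i)}) \in \Gr(i, M)$, and $L_j = D_{\pe_j}(K^{(d)}/k)$ for $S_p(k) = \{\pe_1,\dots,\pe_r\}$, the quantity of interest is therefore $\rank_{\Z_p} \Image(L_j \to M/N)$.

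For the first assertion I would take $K^{(d)} = \tilk$, so that Assumption \ref{ass_decomp} supplies $\rank_{\Z_p} L_j \geq 2$ for every $j$. Applying the first statement of Lemma \ref{lem_rank}(1) in the setup (C) with the lemma's parameter $i$ equal to $2$, generic $N \in \Gr(2, M)$ satisfies $\rank_{\Z_p} \Image(L_j \to M/N) = 2$ for all $j$. By the dictionary above, this is precisely the condition that $\pe_j$ splits finitely in the corresponding $\Z_p^2$-extension for every $j$, i.e.\ that $p$ splits finitely.

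For the generalization, given $K^{(d)}$ with $\rank_{\Z_p} L_j \geq 2$ for every $j$ and $i$ with $2 \leq i \leq d$, I would invoke the ``more generally'' clause of Lemma \ref{lem_rank}(1) with the lemma's $i$ set to $2$ and the lemma's $i'$ set to our $i$. This yields: for generic $N \in \Gr(i, M)$, $\rank_{\Z_p} \Image(L_j \to M/N) \geq 2$ for every $j$, equivalently $\rank_{\Z_p} D_{\pe_j}(K^{(i)}/k) \geq 2$ for every $j$, which is exactly ``the same property'' transferred to $K^{(i)}/k$.

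I anticipate no serious obstacle: everything hinges on Lemma \ref{lem_rank}(1), which has been reserved for Section \ref{sec_appendix}. The only care required is bookkeeping the roles of the two indices in that lemma — the minimum rank hypothesis on the $L_j$ and the dimension of the Grassmann quotient — and checking that ``splits finitely'' for a $\Z_p^2$-extension coincides with the image having full rank $2$, rather than just rank $\geq 2$.
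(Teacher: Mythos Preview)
Your proposal is correct and follows exactly the approach the paper intends: the proposition is set up explicitly as an application of Lemma \ref{lem_rank}(1) in configuration (C), and your translation of ``splits finitely'' and ``the same property'' into rank conditions on the images $L_j \to M/N$, together with the correct identification of the two parameters in the ``more generally'' clause of Lemma \ref{lem_rank}(1), is precisely what is needed. The paper in fact omits a written proof and simply states that the proposition follows from Lemma \ref{lem_rank}(1) applied to (C); your write-up supplies the details.
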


\if0
\begin{proof}
For any $K^{(2)}$ and $\pe \in S_p(k)$, $\pe$ splits finitely in $K^{(2)}/k$ if and only if $\rank_{\Z_p} D_{\pe}(K^{(2)}/k)=2$.
It is known that the decomposition group $D_{\pe}(K^{(2)}/k)$ is the image of 
$D_{\pe}(\tilk/k)$ under the restriction map $\Gal(\tilk/k) \to \Gal(K^{(2)}/k)$.
Then this proposition follows from Lemma \ref{lem_rank} (1) applied to $M=\Gal(\tilk/k)$, $i=2$, and $\{D_{\pe}(\tilk/k)\}_{\pe \in S_p(k)}$ in place of $\{L_j\}_j$.
\end{proof}
\fi

\section{Main results}\label{sec_main_results}

In this section we state the main theorem of this paper (Theorem \ref{thm_main}) and deduce it from three theorems (Theorems \ref{thm_descent_step}, \ref{thm_characteristic}, and \ref{thm_openness}) whose proofs will be given in the later sections.

Let $k$ be a number field.
As already defined in Section \ref{sec_introduction}, put
\[
\EE_{\ns}(k) = \{ K \in \EE(k) \mid \text{every $\pe \in S_p(k)$ does not split in $K/k$} \} \subset \EE_{\sfin}(k).
\]

\begin{lem}\label{lem_clopen}
$\EE_{\ns}(k)$ is an open and closed subset of $\EE(k)$.
\end{lem}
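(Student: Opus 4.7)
The plan is to identify $\EE(k)$ with $\Gr(1, G)$ where $G = \Gal(\tilk/k)$, translate the non-splitting condition into a condition on submodules modulo $p$, and then verify that this condition is locally constant on the basic open neighborhoods of Lemma \ref{lem_neighborhood}.

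First I would set up the translation. For $K \in \EE(k)$ corresponding to $N = \Gal(\tilk/K) \in \Gr(1, G)$, a prime $\pe \in S_p(k)$ does not split in $K/k$ iff $D_\pe(K/k) = \Gal(K/k)$, equivalently $D_\pe(\tilk/k) \cdot N = G$. Since $G/N \cong \Z_p$ is pro-cyclic, a closed subgroup surjects onto $G/N$ iff it surjects onto the unique quotient of order $p$, so Nakayama's lemma gives the equivalent reformulation
\[
\pe \text{ does not split in } K/k \iff D_\pe(\tilk/k) \not\subset N + pG.
\]

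Next I would exploit that the right-hand condition depends only on the image of $N$ in $G/pG$. Fix $N_0 \in \Gr(1, G)$ and consider the basic neighborhood $V_1(N_0) = \{ N \in \Gr(1,G) \mid N \subset N_0 + pG \}$ from Lemma \ref{lem_neighborhood}. For any $N \in V_1(N_0)$, both $N + pG$ and $N_0 + pG$ are subgroups of $G$ of index $p$ (because $G/N$ and $G/N_0$ are free $\Z_p$-modules of rank one, so both images in $G/pG$ are $(d(k){-}1)$-dimensional over $\F_p$). The inclusion $N + pG \subset N_0 + pG$ combined with the equality of indices forces $N + pG = N_0 + pG$. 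Consequently, for each $\pe \in S_p(k)$ separately, the condition $D_\pe(\tilk/k) \subset N + pG$ is constant as $N$ varies over $V_1(N_0)$.

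Finally I would conclude: the map $K \mapsto \{ \pe \in S_p(k) \mid \pe \text{ splits in } K/k \}$ is locally constant on $\EE(k)$, so every one of its level sets is simultaneously open and closed; the preimage of $\emptyset$ is precisely $\EE_{\ns}(k)$. The only mildly nontrivial step is the initial Nakayama reformulation of the non-splitting condition; once that is in hand, the rest is formal.
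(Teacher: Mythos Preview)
Your argument is correct and is essentially the paper's proof unwound in group-theoretic language: the paper observes that $\pe$ does not split in $K/k$ iff it does not split in the first layer $k_1$, so membership in $\EE_{\ns}(k)$ depends only on $k_1$, and then invokes Remark~\ref{rem_top_greenberg}. Your condition $D_\pe(\tilk/k)\not\subset N+pG$ is exactly the statement that $\pe$ does not split in $k_1$, and your neighborhood $V_1(N_0)$ (on which $N+pG$ is constant) is exactly the set of $K$ sharing the same first layer as $K_0$; the two proofs are the same modulo the dictionary between subfields and submodules.
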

\begin{proof}
For any $K \in \EE(k)$, $K \in \EE_{\ns}(k)$ if and only if every $\pe \in S_p(k)$ does not split in the first layer of $K/k$.
In particular whether $K \in \EE_{\ns}(k)$ or not is determined by the first layer of $K$.
Now Remark \ref{rem_top_greenberg} implies the assertion.
\end{proof}

\begin{rem}
Since $\Q^{\cyc}/\Q$ is a $\Z_p$-extension which is totally ramified at $p$, 
if $p \nmid [k : \Q]$ or $p$ is unramified in $k/\Q$, then $k^{\cyc} \in \EE_{\ns}(k)$ and in particular $\EE_{\ns}(k) \neq \emptyset$.
\end{rem}

Now we can state the main theorem of this paper.

\begin{thm}\label{thm_main}
Suppose GGC holds for $(k, p)$ and $s'(k) = 0$.
Then for generic $\Z_p$-extensions $K$ of $k$, if $K \in \EE_{\ns}(k)$ then $\mu(K/k) = 0$ and $\lambda(K/k) = s(k)$.
\end{thm}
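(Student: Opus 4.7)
The plan is to deduce Theorem \ref{thm_main} from three intermediate results corresponding to the three steps announced in the introduction: (i) a descent argument that transports the pseudo-nullity $X(\tilk) \sim 0$ from the top $\Z_p^{d(k)}$-extension down to weakly almost all $\Z_p$-extensions; (ii) a characteristic-ideal computation that converts this smallness, together with arithmetic semi-simplicity, into the desired Iwasawa invariants; and (iii) an openness statement that upgrades ``weakly almost all'' to ``generic''. These three inputs are the content of Theorems \ref{thm_descent_step}, \ref{thm_characteristic}, and \ref{thm_openness}, which I would formulate and then combine as follows.

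For the descent step, one formulates, for each $1 \leq i \leq d(k)$, a smallness property $P_i$ on $\Z_p^i$-extensions --- essentially the pseudo-nullity of an appropriate ``non-trivial part'' of $X(K^{(i)})$ as a $\Lambda(K^{(i)}/k)$-module --- in such a way that $P_{d(k)}(\tilk)$ is exactly GGC, and such that $P_{i+1}(K^{(i+1)})$ implies $P_i(K^{(i)})$ for weakly almost all $\Z_p^i$-subextensions $K^{(i)} \subset K^{(i+1)}$. Iterated application of Lemma \ref{lem_induction}, descending from $i = d(k)$ to $i = 1$, then yields $P_1(K)$ for weakly almost all $K \in \EE(k)$.

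For the characteristic-ideal step I would combine $P_1(K)$ with three further inputs that all hold generically under the hypotheses: the equality $s(K/k) = s(k)$ of Proposition \ref{prop_almost_ram}(3), the arithmetic semi-simplicity furnished by $s'(k) = 0$ via Corollary \ref{cor_jaulent}, and the assumption $K \in \EE_{\ns}(k)$, which keeps Proposition \ref{prop_almost_ram}(2) effective by excluding spurious decomposition contributions. By Lemma \ref{lem_jaulent}, semi-simplicity prevents $(\sigma-1)^2$ from dividing any elementary divisor of $X(K)$, so the elementary module pseudo-isomorphic to $X(K)$ splits as $(\Lambda(K/k)/(\sigma-1))^{s(K/k)} \oplus Y$ for some $Y$ whose characteristic polynomial is coprime to $\sigma - 1$; the smallness $P_1(K)$ should be precisely what is needed to kill $Y$ together with any $p$-torsion summand, giving $\mu(K/k) = 0$ and $\lambda(K/k) = s(K/k) = s(k)$. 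Finally, Theorem \ref{thm_openness} asserts that this conclusion carves out an open subset of $\EE_{\ns}(k)$, whose complement is then a measurable closed set containing the set of bad $K$. Combined with the ``weakly almost all'' output, this complement has measure zero, and since $\EE_{\ns}(k)$ is clopen by Lemma \ref{lem_clopen}, the resulting statement has the form required by Definition \ref{def_generic}(1).

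The hard step will be Step 1, the descent theorem. The functor $M \mapsto M_{\Gal(K^{(i+1)}/K^{(i)})}$ is not exact, and pseudo-nullity is fragile: a pseudo-null $\Lambda(K^{(i+1)}/k)$-module may have a non-pseudo-null image in $X(K^{(i)})$, and the identification of the coinvariants of $X(K^{(i+1)})$ with a piece of $X(K^{(i)})$ involves correction terms coming from ramification and decomposition of primes above $p$ along the chosen $\Z_p$-direction. The heart of the proof will be to control those correction terms while using Lemmas \ref{lem_rank} and \ref{lem_induction} to kill the bad directions by a sufficiently generic choice of $K^{(i)} \subset K^{(i+1)}$, exploiting Assumption \ref{ass_decomp}-type inputs where available. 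By contrast, Step 2 is commutative algebra over the one-variable Iwasawa algebra $\Lambda(K/k)$ once all inputs are collected, and Step 3 is essentially an upper semi-continuity statement for $\lambda$ and $\mu$.
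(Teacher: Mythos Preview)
Your three-step outline matches the paper's proof precisely: Theorem \ref{thm_descent_step}(1) with iterated Lemma \ref{lem_induction} for the descent, Theorem \ref{thm_characteristic} for Step (ii), and Theorem \ref{thm_openness} for the upgrade to ``generic.'' One point needs correction, though. In Step (ii) you write that $P_1(K)$ (i.e., $X(K)^\dagger \sim 0$) is ``precisely what is needed to kill $Y$ together with any $p$-torsion summand,'' and you attribute the role of $K \in \EE_{\ns}(k)$ to keeping Proposition \ref{prop_almost_ram}(2) effective. That is not how the argument runs: by Lemma \ref{lem_dagger}, $X(K)^\dagger \sim 0$ only says that each elementary divisor of $X(K)$ divides some power of $\sigma^{p^N}-1$, so cyclotomic factors $(\sigma^{p^N}-1)/(\sigma-1)$ could still survive in $Y$. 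What actually eliminates them is the hypothesis $K \in \EE_{\ns}(k)$, via Proposition \ref{prop_characteristic} (which rests on Lemma \ref{lem_cft}): it shows $\ch X(K)$ is coprime to $(\sigma^{p^N}-1)/(\sigma-1)$ for every $N$. Combining the two forces $\ch X(K)$ to be a pure power of $\sigma-1$, after which Lemma \ref{lem_jaulent} finishes as you describe. Note also that only the last sentence of Theorem \ref{thm_descent_step}(1) is used in the proof of Theorem \ref{thm_main}, so no Assumption \ref{ass_decomp}-type input is needed here.
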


\begin{rem}
Let us illustrate the reason why the $\Z_p$-extensions are restricted to $K \in \EE_{\ns}(k)$.
Consider the extreme case, namely, suppose that $K \in \EE_{\ram}(k)$ satisfies that $p$ splits completely in $k_1/\Q$, where $k_1$ is the first layer of $K/k$.
For simplicity, suppose that Leopoldt's Conjecture holds for $(k, p)$ and $d(k) = 1+r_2(k) \geq 2$.
Then by Theorem \ref{thm_cft}, $\rank_{\Z_p} I_{\pe_1}(\widetilde{k_1}/k_1) = 1$ for every $\pe_1 \in S_p(k_1)$
and hence $\widetilde{k_1}/K$ is unramified.
Consequently, 
\[
\rank_{\Z_p} X(K) \geq \rank_{\Z_p} \Gal(\widetilde{k_1}/K) = d(k_1)-1 \geq r_2(k_1) = r_2(k) p = s(k) p > s(k),
\]
where the last equality comes from Example \ref{eg_s_ram}.
Therefore the conclusion of Theorem \ref{thm_main} does not hold in this case.

We also remark that the assumption $K \in \EE_{\ns}(k)$ is too restrictive.
In fact, by modifying Lemma \ref{lem_cft}, 
one may increase the $\Z_p$-extensions to which Theorem \ref{thm_main} applies
(see Theorem \ref{thm_ozaki}), 
but we do not try the refinement in this paper.
\end{rem}

To state the next theorems, we define an auxiliary algebra as follows.
Let $K/k$ be a $\Z_p$-extension.
We put
\[
\Lambda^{\dagger}(K/k) = \Lambda(K/k) \left[\frac{1}{\gamma - 1} \middle| \gamma \in \Gal(K/k), \gamma \neq 1 \right],
\]
which is a noetherian integrally closed domain.
If $\sigma$ is a topological generator of $\Gal(K/k)$, then
\[
\Lambda^{\dagger}(K/k) = \Lambda(K/k) \left[\frac{1}{\sigma^{p^n} - 1} \middle| \text{$n$ is a non-negative integer} \right].
\]
Over the algebra $\Lambda^{\dagger}(K/k)$, since its dimension is 1, a module is pseudo-null if and only if it is zero and 
a homomorphism is pseudo-isomorphic if and only if it is isomorphic.
In the following, we prefer the term pseudo-null (resp. pseudo-isomorphic) rather than zero (resp. isomorphic) in order to keep harmony with multiple $\Z_p$-extensions.

For a $\Lambda(K/k)$-module $X$, 
we put $X^{\dagger} =  \Lambda^{\dagger}(K/k)\otimes_{\Lambda(K/k)} X$, which is always considered as a $\Lambda^{\dagger}(K/k)$-module.

\begin{lem}\label{lem_dagger}
Let $K/k$ be a $\Z_p$-extension and $X$ a finitely generated torsion $\Lambda(K/k)$-module.
Then $X^{\dagger} \sim 0$ if and only if the characteristic ideal $\ch (X)$ contains $(\gamma - 1)^N$ for
some $\gamma \in \Gal(K/k), \gamma \neq 1$ and some positive integer $N$.
\end{lem}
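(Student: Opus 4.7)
The plan is to combine the structure theorem for finitely generated torsion $\Lambda := \Lambda(K/k)$-modules with a unit comparison among the elements $\gamma - 1$. Fix a topological generator $\sigma$ of $\Gal(K/k)$ and identify $\Lambda \cong \Z_p[[T]]$ via $\sigma \leftrightarrow 1+T$. For $\gamma = \sigma^a$ with $a = p^n u$ and $u \in \Z_p^{\times}$, the elements $(1+T)^a - 1$ and $(1+T)^{p^n}-1$ are distinguished of the same Weierstrass degree $p^n$ and share the same zero set $\{\zeta - 1 : \zeta^{p^n} = 1\}\subset \overline{\Q}_p$ with the same multiplicities, so $\gamma - 1$ is a unit multiple of $\sigma^{p^n} - 1$ in $\Lambda$. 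Consequently, every element of the multiplicative set $S$ used to define $\Lambda^{\dagger}(K/k)$ is, up to a unit of $\Lambda$, of the form $\prod_j (\sigma^{p^{n_j}}-1)^{a_j}$, which in turn divides $(\sigma^{p^n}-1)^M$ for $n = \max_j n_j$ and $M = \sum_j a_j$. In particular, an element $f \in \Lambda$ is a unit in $\Lambda^{\dagger}(K/k)$ if and only if $f$ divides some $(\sigma^{p^n}-1)^M$ in $\Lambda$.

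Next, apply the structure theorem to obtain a $\Lambda$-homomorphism $\varphi \colon X \to E := \bigoplus_i \Lambda/(f_i^{e_i})$ with $\ch(X) = (\prod_i f_i^{e_i})$ and with $\ker \varphi$ and $\operatorname{coker}\varphi$ pseudo-null over $\Lambda$. Since $\Lambda = \Z_p[[T]]$ is a $2$-dimensional regular local ring whose unique height-$2$ prime is $(p, T)$, every pseudo-null $\Lambda$-module is annihilated by $(p, T)^m$, hence by $(\sigma-1)^m = T^m$, for some $m$. Tensoring with the flat extension $\Lambda \to \Lambda^{\dagger}(K/k)$ (in which $\sigma-1$ is invertible) therefore kills $\ker\varphi$ and $\operatorname{coker}\varphi$, so $\varphi$ induces an isomorphism $X^{\dagger} \xrightarrow{\sim} E^{\dagger}$; moreover $X^{\dagger} \sim 0$ means simply $X^{\dagger} = 0$ because $\dim \Lambda^{\dagger}(K/k) = 1$. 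The lemma is thus reduced to showing that $E^{\dagger} = 0$ if and only if $\prod_i f_i^{e_i}$ divides some $(\gamma - 1)^N$ in $\Lambda$.

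For the ``if'' direction, suppose $\ch(X) \ni (\gamma - 1)^N$. Then each $f_i^{e_i}$ divides $(\gamma-1)^N$, so $(\gamma - 1)^N$ annihilates each $\Lambda/(f_i^{e_i})$, hence $E$; thus $(\gamma - 1)^N X \subseteq \ker \varphi$, and choosing $m$ with $(\sigma-1)^m \ker\varphi = 0$, the element $(\sigma-1)^m (\gamma-1)^N \in S$ annihilates $X$, yielding $X^{\dagger} = 0$. For the ``only if'' direction, suppose $X^{\dagger} = 0$. Then each $(\Lambda/(f_i^{e_i}))^{\dagger} = 0$, so each $f_i^{e_i}$ is a unit in $\Lambda^{\dagger}$, and by the opening observation $f_i^{e_i}$ divides $(\sigma^{p^{n_i}}-1)^{M_i}$ in $\Lambda$ for some $n_i, M_i$. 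Taking $n = \max_i n_i$, each $\sigma^{p^{n_i}}-1$ divides $\sigma^{p^n}-1$, so $\prod_i f_i^{e_i}$ divides $(\sigma^{p^n}-1)^N$ with $N = \sum_i M_i$, i.e. $\ch(X) \ni (\gamma-1)^N$ for $\gamma := \sigma^{p^n} \neq 1$. The only non-routine step is the opening unit comparison between $\gamma - 1$ and $\sigma^{p^n}-1$; everything else is a formal consequence of the structure theorem and flatness of localization.
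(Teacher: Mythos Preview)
Your proof is correct and follows essentially the same approach as the paper: both arguments use the structure theorem and flatness of localization to reduce to cyclic modules $\Lambda/(f)$, then characterize when $f$ becomes a unit in $\Lambda^{\dagger}$. You supply more explicit detail---the unit comparison $\gamma-1 = (\text{unit})\cdot(\sigma^{p^n}-1)$ and the observation that pseudo-null $\Z_p[[T]]$-modules are annihilated by some $T^m$---whereas the paper simply invokes the general fact that localization of a pseudo-null module over a noetherian integrally closed domain remains pseudo-null; your zero-set justification for the unit comparison is slightly informal, but the conclusion follows cleanly from the binomial identity $(1+V)^u - 1 = uV(1+O(V))$ with $V=(1+T)^{p^n}-1$.
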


\begin{proof}
In general, if $\Lambda$ is a noetherian integrally closed domain, $S$ is a multiplicative set of $\Lambda$, and $X$ is a pseudo-null $\Lambda$-module, 
then one can easily show that $S^{-1}X$ is a pseudo-null $S^{-1}\Lambda$-module.
Therefore in our case if $X \sim 0$ then $X^{\dagger} \sim 0$.
By the definition of the characteristic ideal and the flatness of $X^{\dagger}$, the assertion is now deduced to the case where 
$X \simeq \Lambda(K/k)/(f)$ with $f$ a power of an irreducible element of $\Lambda(K/k)$.
In that case 
\[
X^{\dagger} \sim 0 \Leftrightarrow f \in \left(\Lambda(K/k)^{\dagger}\right)^{\times}
\Leftrightarrow \text{$f$ divides $(\gamma - 1)^N$ for
some $\gamma$ and $N$},
\]
which proves the lemma.
\end{proof}

In order to simplify the notation, for a $\Z_p^i$-extension $K^{(i)}$ of $k$ with $i \geq 2$,
we put $\Lambda^{\dagger}(K^{(i)}/k) = \Lambda(K^{(i)}/k)$.
Moreover, for a $\Lambda(\KKi / k)$-module $X$,
we put $X^{\dagger} = \Lambda^{\dagger}(K^{(i)}/k) \otimes_{\Lambda(K^{(i)}/k)} X = X$.

We prove the following theorem in Section \ref{sec_descent}.

\begin{thm}\label{thm_descent_step}
Let $i \geq 1$ and $K^{(i+1)}$ be a $\Z_p^{i+1}$-extension of $k$.
Suppose that 
\[
X(K^{(i+1)}) \sim \bigoplus_{l = 1}^t \Lambda(K^{(i+1)}/k) / (f_l),
\]
where $f_l$ is a nonzero element of $\Lambda(K^{(i+1)}/k)$.
(Such a pseudo-isomorphism always exists since $X(K^{(i+1)})$ is a finitely generated torsion $\Lambda(K^{(i+1)}/k)$-module.)
\begin{enumerate}
\item[$(1)$] For generic $\Z_p^i$-extensions $K^{(i)} \subset K^{(i+1)}$ of $k$, we have a $\Lambda^{\dagger}(K^{(i)}/k)$-homomorphism
\[
\bigoplus_{l = 1}^t \Lambda^{\dagger}(K^{(i)}/k) / (\overline{f_l}) \to X(K^{(i)})^{\dagger}
\]
with pseudo-null cokernel, where $\overline{f_l}$ denotes the natural image of $f_l$.
In particular, if $X(K^{(i+1)}) \sim 0$, then for generic $\Z_p^i$-extensions $K^{(i)} \subset K^{(i+1)}$ of $k$ we have $X(K^{(i)})^{\dagger} \sim 0$.
\item[$(2)$] Suppose that for every $\pe \in S_p(k)$, we have $\rank_{\Z_p} D_{\pe}(K^{(i+1)}/k) \geq 2$.
Then for generic $\Z_p^i$-extensions $K^{(i)} \subset K^{(i+1)}$ of $k$, we have 
\[
X(K^{(i)})^{\dagger} \sim \bigoplus_{l = 1}^t \Lambda^{\dagger}(K^{(i)}/k) / (\overline{f_l}).
\]
\end{enumerate}
\end{thm}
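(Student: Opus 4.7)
The plan is descent along $\Gamma := \Gal(K^{(i+1)}/K^{(i)}) \simeq \Z_p$. Starting from a pseudo-isomorphism $\alpha\colon X(K^{(i+1)}) \to \bigoplus_l \Lambda(K^{(i+1)}/k)/(f_l)$ with pseudo-null kernel $A$ and cokernel $B$, I apply $(-)_\Gamma$. The identification $(\Lambda(K^{(i+1)}/k)/(f_l))_\Gamma \simeq \Lambda(K^{(i)}/k)/(\overline{f_l})$ turns this into a $\Lambda(K^{(i)}/k)$-map whose kernel and cokernel are controlled by $A_\Gamma, A^\Gamma, B_\Gamma$. For generic $K^{(i)}$ the vanishing loci $\{\overline{f_l}=0\}$ are avoided and the supports of $A$ and $B$ meet the specialization hyperplane $V(\gamma-1) \subset \mathrm{Spec}(\Lambda(K^{(i+1)}/k))$ transversally, so a codimension count shows $A_\Gamma, A^\Gamma, B_\Gamma$ are pseudo-null over $\Lambda(K^{(i)}/k)$, hence $\dagger$-pseudo-null. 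Thus generically $X(K^{(i+1)})_\Gamma^\dagger \sim \bigoplus_l \Lambda^\dagger(K^{(i)}/k)/(\overline{f_l})$. Combining a pseudo-inverse of this pseudo-isomorphism with the classical descent map
\[
X(K^{(i+1)})_\Gamma \twoheadrightarrow \Gal(L(K^{(i)}) K^{(i+1)}/K^{(i+1)}) \simeq \Gal(L(K^{(i)})/L(K^{(i)}) \cap K^{(i+1)}) \hookrightarrow X(K^{(i)})
\]
arising from $L(K^{(i)}) K^{(i+1)} \subset L(K^{(i+1)})$ produces the candidate homomorphism $\bigoplus_l \Lambda^\dagger(K^{(i)}/k)/(\overline{f_l}) \to X(K^{(i)})^\dagger$.

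For part (1) it remains to check that the cokernel of this composite is $\dagger$-pseudo-null. Modulo the $\dagger$-pseudo-null error from coinvariants, this cokernel is identified with the descent cokernel $\Gal(L(K^{(i)}) \cap K^{(i+1)}/K^{(i)})$, which is a $\Z_p$-rank-$\leq 1$ quotient of $\Gamma$ on which $\Gal(K^{(i)}/k)$ acts trivially (because $K^{(i+1)}/k$ is abelian); it is therefore annihilated by the augmentation ideal of $\Gal(K^{(i)}/k)$, whose height is $i$. For $i \geq 2$ this makes it pseudo-null over $\Lambda(K^{(i)}/k) = \Lambda^\dagger(K^{(i)}/k)$; for $i = 1$, every element of the augmentation ideal has the form $\sigma - 1$ with $\sigma \neq 1$, so Lemma \ref{lem_dagger} makes the cokernel $\dagger$-pseudo-null. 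This establishes part (1).

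For part (2) I must additionally show that the descent kernel $\Gal(L(K^{(i+1)})/L(K^{(i)}) K^{(i+1)})_\Gamma$ is $\dagger$-pseudo-null. By class field theory this kernel, as a $\Lambda(K^{(i)}/k)$-module, is controlled by the decomposition subgroups $D_\pe(K^{(i+1)}/K^{(i)}) \subset \Gamma$ for $\pe \in S_p(k)$ together with their inertia counterparts. Via the short exact sequence $0 \to D_\pe(K^{(i+1)}/K^{(i)}) \to D_\pe(K^{(i+1)}/k) \to D_\pe(K^{(i)}/k) \to 0$ and Lemma \ref{lem_rank}(1) applied in $M = \Gal(K^{(i+1)}/k)$ with $L_j = D_{\pe_j}(K^{(i+1)}/k)$, the hypothesis $\rank_{\Z_p} D_\pe(K^{(i+1)}/k) \geq 2$ yields generic rank control on $D_\pe(K^{(i+1)}/K^{(i)})$ sufficient to present the kernel as a module annihilated by the augmentation ideal of $\Gal(K^{(i)}/k)$, whence $\dagger$-pseudo-nullity follows as in part (1). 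The main technical obstacle, in my view, is exactly this last step: the precise class-field-theoretic identification of the descent kernel as a $\Lambda(K^{(i)}/k)$-module and the translation of the decomposition-rank hypothesis into its $\dagger$-pseudo-nullity, both of which demand careful bookkeeping of ramification and decomposition inside the intermediate extension $L(K^{(i+1)})/L(K^{(i)}) K^{(i+1)}$ under the action of $\Gamma$.
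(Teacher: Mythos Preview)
Your plan for part~(1) is sound and essentially matches the paper: descend the pseudo-isomorphism via Lemma~\ref{lem_module} (your ``transversality'' codimension count), then compose with the restriction map down to $X(K^{(i)})$. The paper organizes this slightly differently, working with the field $\LL$ (the maximal abelian extension of $K^{(i)}$ inside $L(K^{(i+1)})$) so that $X(K^{(i+1)})_\Gamma = \Gal(\LL/K^{(i+1)})$ sits in the exact sequence $0 \to X(K^{(i+1)})_\Gamma \to \Gal(\LL/K^{(i)}) \to \Gamma \to 0$ and then $\Gal(\LL/K^{(i)}) \twoheadrightarrow X(K^{(i)})$; but your route through $L(K^{(i)})K^{(i+1)}$ differs only by $\dagger$-pseudo-null pieces.

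For part~(2), however, there is a genuine gap, and it is precisely at the point you flag. Your assertion that the descent kernel can be ``presented as a module annihilated by the augmentation ideal of $\Gal(K^{(i)}/k)$'' is not correct: there is no reason the full group $\Gal(K^{(i)}/k)$ acts trivially on it. What the paper does instead is to identify this kernel (up to $\dagger$-pseudo-null) with $\sum_{\pe \in S_p(k)} \II_\pe(\LL/K^{(i)})$, where $\II_\pe = \sum_{\Pe \mid \pe} I_\Pe(\LL/K^{(i)})$, and then analyze each $\II_\pe$ separately. The key mechanism is this: pick a prime $\Pe_0 \mid \pe$ of $K^{(i)}$ and a topological generator $\rho$ of $I_{\Pe_0}(\LL/K^{(i)})$. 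Since all the $I_\Pe$ are $\Gal(K^{(i)}/k)$-conjugate to $I_{\Pe_0}$, the orbit of $\rho$ generates $\II_\pe$, so $\II_\pe$ is cyclic over $\Lambda(K^{(i)}/k)$. Now because $\LL/K^{(i+1)}$ is unramified, $I_{\Pe_0}(\LL/K^{(i)})$ injects into $\Gamma = \Gal(K^{(i+1)}/K^{(i)})$, and conjugation by any lift of $\sigma \in D_\pe(K^{(i)}/k)$ is trivial on $\Gamma$ (as $\Gal(K^{(i+1)}/k)$ is abelian), hence trivial on $\rho$. Thus $\II_\pe$ is a quotient of $\Lambda(K^{(i)}/k)/(\sigma-1 : \sigma \in D_\pe(K^{(i)}/k))$.

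So the annihilating ideal is the $D_\pe(K^{(i)}/k)$-augmentation, not the full augmentation, and it depends on $\pe$. This is exactly where the decomposition-rank hypothesis enters: one needs $\rank_{\Z_p} D_\pe(K^{(i)}/k) \geq 2$ for $i \geq 2$ (giving honest pseudo-nullity), or $\geq 1$ for $i = 1$ (giving $\dagger$-pseudo-nullity via Lemma~\ref{lem_dagger}). Lemma~\ref{lem_rank}(1) and Proposition~\ref{prop_almost_decomp_2} supply this generically from $\rank_{\Z_p} D_\pe(K^{(i+1)}/k) \geq 2$. Your focus on $D_\pe(K^{(i+1)}/K^{(i)}) \subset \Gamma$ is therefore misdirected: the object that matters is $D_\pe(K^{(i)}/k)$, and its role is to fix the inertia generator by conjugation, not to sit inside $\Gamma$.
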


Note that $\overline{f_l}$ is nonzero for all but finitely many $\Z_p^i$-extension $\KKi \subset \KKip$ of $k$ (Lemma \ref{lem_module_4}).

\begin{rem}
In fact, we need only the last part of (1) to prove Theorem \ref{thm_main}.
The general assertion and the proof of it are also valid for tamely ramified Iwasawa modules in the sense of \cite{IMO}.
In the tamely ramified case, it seems that the pseudo-nullity of the Iwasawa module of $\tilk$ often fails and Theorem \ref{thm_descent_step} should play an interesting role.
\end{rem}

\if0
\begin{rem}
Let $T$ be a set of finite primes of $k$ disjoint from $S_p(k)$.
For a $\Z_p^i$-extension $K^{(i)}$ of $k$, let $M_S(K^{(i)})$ be the maximal $T$-ramified abelian pro-$p$ extension of $K^{(i)}$ and put $X_T(K^{(i)}) = \Gal(M_T(K^{(i)})/K^{(i)})$.
Then $X_T(K^{(i)})$ is a $\Lambda(K^{(i)}/k)$-module, which is often called the $T$-ramified (or tamely ramified) Iwasawa module.
For a $\Z_p$-extension $K$ of $k$, the structure of $X_T(K)$ is also studied by several authors.
It can be shown exactly in the manner that Theorem \ref{thm_descent_step} also holds if we replace $X$ by $X_T$.
\end{rem}
\fi

We prove the following theorem in Section \ref{sec_characteristic}.

\begin{thm}\label{thm_characteristic}
Let $K \in \EE_{\ns}(k)$.
Suppose that $X(K)^{\dagger} \sim 0$ and
$K/k$ is arithmetically semi-simple (i.e., $s'(K/k) = 0$).
Then $X(K)$ is a finitely generated $\Z_p$-module of rank $s(K/k)$.
\end{thm}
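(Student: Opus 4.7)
The plan is to combine $\Lambda$-module-theoretic constraints on $X(K)$ from the hypotheses $X(K)^{\dagger}\sim 0$ and arithmetic semi-simplicity with the arithmetic input $K\in\EE_{\ns}(k)$. Throughout write $\Gamma:=\Gal(K/k)$ and fix a topological generator $\sigma$.

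First, by Lemma \ref{lem_dagger} the hypothesis $X(K)^{\dagger}\sim 0$ gives that $\ch (X(K))$ contains $(\sigma^{p^n}-1)^{N}$ for some $n\ge 0$, $N\ge 1$. Since $\sigma^{p^n}-1=(\sigma-1)\prod_{m=1}^{n}\Phi_{p^m}(\sigma)$ is a product of pairwise coprime distinguished irreducibles in $\Lambda(K/k)$, every irreducible factor of $\ch (X(K))$ is either $\sigma-1$ or some $\Phi_{p^m}(\sigma)$; in particular $\mu(K/k)=0$, so $X(K)$ is finitely generated over $\Z_p$. Combining with Lemma \ref{lem_jaulent} (applicable since $s'(K/k)=0$), which forces $(\sigma-1)^{2}\nmid f_l$ for every elementary factor, one obtains a pseudo-isomorphism
\[
X(K)\;\sim\;\bigl(\Lambda(K/k)/(\sigma-1)\bigr)^{\oplus s_0}\;\oplus\;\bigoplus_{j=1}^{t}\Lambda(K/k)/\bigl(\Phi_{p^{m_j}}(\sigma)^{a_j}\bigr)
\]
with $s_0\ge 0$, $m_j\ge 1$, $a_j\ge 1$. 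The theorem then reduces to showing $s_0=s(K/k)$ and $t=0$.

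Next I would exploit $K\in\EE_{\ns}(k)$. For each $\pe\in S_p(k)$ let $\Pe$ denote the unique prime of $K$ above $\pe$; the decomposition group $D_{\Pe}(L(K)/K)\subset X(K)$ is procyclic (generated by Frobenius in the unramified extension $L(K)/K$), and since $\sigma(\Pe)=\Pe$ the generator $\sigma$ fixes each such Frobenius. Hence $Y(K):=\sum_{\pe\in S_p(k)}D_{\Pe}(L(K)/K)$ is a $\Z_p$-submodule of $X(K)^{\Gamma}$. Setting $X'(K):=X(K)/Y(K)$ and using the six-term $\Gamma$-cohomology exact sequence
\[
0\to Y(K)\to X(K)^{\Gamma}\to X'(K)^{\Gamma}\to Y(K)\to X(K)_{\Gamma}\to X'(K)_{\Gamma}\to 0,
\]
together with the identity $\rank_{\Z_p}M^{\Gamma}=\rank_{\Z_p}M_{\Gamma}$ for any finitely generated $\Z_p$-module $M$ with $\Gamma$-action, I obtain $\rank_{\Z_p} X(K)^{\Gamma}=s(K/k)$ and $\rank_{\Z_p} X'(K)^{\Gamma}=s'(K/k)=0$. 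Exactness at $X(K)^{\Gamma}$ implies that $X(K)^{\Gamma}/Y(K)$ injects into $X'(K)^{\Gamma}$ and hence has rank zero, so $\rank_{\Z_p}Y(K)=s(K/k)$; matched against the elementary decomposition (via $X(K)^{\Gamma}\sim\Z_p^{s_0}$) this forces $s_0=s(K/k)$ and $X'(K)\sim\bigoplus_{j=1}^{t}\Lambda(K/k)/(\Phi_{p^{m_j}}(\sigma)^{a_j})$.

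The main obstacle, and crux of the argument, is to prove $t=0$---equivalently, that $X'(K)$ is finite. The conditions available at this stage ($X'(K)$ annihilated by $(\sigma^{p^n}-1)^{N}$, with both $X'(K)^{\Gamma}$ and $X'(K)_{\Gamma}$ finite) do not suffice in the abstract, as the $\Lambda(K/k)$-module $\Lambda(K/k)/(\Phi_p(\sigma))$ satisfies all of them yet has $\Z_p$-rank $p-1$. A genuinely arithmetic argument is required, invoking the non-splitting hypothesis $K\in\EE_{\ns}(k)$ at each intermediate layer $k_m$ of $K/k$. The plan is to interpret a hypothetical $\Phi_{p^{m_j}}(\sigma)$-factor in $X(K)$ via its $\Gamma^{p^{m_j}}$-coinvariants $X(K)/(\sigma^{p^{m_j}}-1)X(K)\cong\Gal(\widetilde{L}(K)^{(m_j)}/K)$, where $\widetilde{L}(K)^{(m_j)}$ is the maximal subfield of $L(K)$ abelian over $k_{m_j}$. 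Since $\widetilde{L}(K)^{(m_j)}\subset M_{S_p(k_{m_j})}(k_{m_j})$, a careful Leopoldt-type accounting using the rank bound $d(k_{m_j})$ on $\Gal(\widetilde{k_{m_j}}/k_{m_j})$, combined with the non-splitting of primes above $p$ at every layer, rules out such factors. Once $t=0$ is established, $X(K)\sim\Z_p^{s(K/k)}$, and combined with $\mu(K/k)=0$ we conclude that $X(K)$ is a finitely generated $\Z_p$-module of rank $s(K/k)$.
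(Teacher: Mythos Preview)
Your reduction is sound through the displayed pseudo-isomorphism, and you correctly identify the crux as ruling out the $\Phi_{p^{m_j}}$-factors. (Once $t=0$ is known, $s_0=\rank_{\Z_p}X(K)_{\Gamma}=s(K/k)$ is immediate, so the six-term sequence and the decomposition-group submodule $Y(K)$ in your middle paragraph are an unnecessary detour.) The genuine gap is the argument for $t=0$: the proposed ``Leopoldt-type accounting using the rank bound $d(k_{m_j})$'' does not work. For $p$ odd, archimedean places split completely in any $\Z_p$-extension, so $r_2(k_m)=p^m r_2(k)$ and hence $d(k_m)\ge p^m r_2(k)+1$ grows without bound whenever $r_2(k)>0$. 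The inclusion $\widetilde{L}(K)^{(m)}\subset M_{S_p(k_m)}(k_m)$ therefore yields no useful upper bound on $\rank_{\Z_p}\Gal(\widetilde{L}(K)^{(m)}/K)$, and nothing in your sketch explains how the non-splitting hypothesis compensates.

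The paper's approach (Proposition~\ref{prop_characteristic} via Lemma~\ref{lem_cft}) avoids this by comparing $\LL_N:=\widetilde{L}(K)^{(N)}$ directly to $\LL_0$ rather than bounding it absolutely: one shows $[\LL_N:\LL_0]<\infty$ for every $N$, which forces $\ch X(K)$ to be prime to $(\sigma^{p^N}-1)/(\sigma-1)$ and hence $t=0$. The key lemma is a concrete class field theory computation: if no prime of $S\subset S_p(k)$ splits in a finite cyclic $k'/k$, and $\MM$ is the largest subfield of $M_{S'}(k')$ on whose inertia groups $I_{\pe'}(\MM/k')$ the group $\Gal(k'/k)$ acts trivially, then $\MM/M_S(k)$ is finite. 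One proves this via Theorem~\ref{thm_cft} by comparing $\prod_{\pe'\in S'}U_{\pe'}^{(1)}/(\sigma-1)U_{\pe'}^{(1)}$ with $\prod_{\pe\in S}U_{\pe}^{(1)}$ under the norm map (they have the same $\Z_p$-rank, and the induced map on images of global units has finite cokernel). The non-splitting hypothesis $K\in\EE_{\ns}(k)$ enters precisely to make the $\Gal(k_N/k)$-action on each individual $I_{\pe_N}$ well-defined and to place $\LL_N$ inside $\MM$; this, not a rank estimate for $d(k_m)$, is what the hypothesis buys.
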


We prove the following theorem in Section \ref{sec_openness}.
\begin{thm}\label{thm_openness}
The set 
\[
\{ K \in \EE_{\ram}(k) \mid \text{$X(K)$ is a finitely generated $\Z_p$-module of rank $s(k)$} \}
\]
is an open subset of $\EE(k)$.
\end{thm}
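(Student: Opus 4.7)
My plan is based on a short exact sequence relating $X(K)$ to the big Iwasawa module $X(\tilk)$ and to combinatorial ramification data. I first derive the sequence, then reformulate the target property, and finally tackle two openness conditions.

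First, for any $K \in \EE(k)$, since $L(K)/K$ is unramified, the compositum $L(K) \cdot \tilk$ is an unramified abelian pro-$p$ extension of $\tilk$ and hence is contained in $L(\tilk)$. This gives a short exact sequence of $\Z_p$-modules
\[
0 \to \Gal(L(K)\tilk/\tilk) \to X(K) \to \Gal(\tilk \cap L(K)/K) \to 0,
\]
in which the right-hand term is free of $\Z_p$-rank $s(K/k)$ (being a quotient of $\Gal(\tilk/K) \cong \Z_p^{d(k)-1}$, with rank computed as in Proposition~\ref{prop_almost_ram}(2)) and the left-hand term is a quotient of $X(\tilk) = \Gal(L(\tilk)/\tilk)$. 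From the rank identity $\rank_{\Z_p} X(K) = \rank_{\Z_p} \Gal(L(K)\tilk/\tilk) + s(K/k)$ together with $s(K/k) \geq s(k)$ on $\EE_{\ram}(k)$, the property ``$X(K)$ is a finitely generated $\Z_p$-module of rank $s(k)$'' for $K \in \EE_{\ram}(k)$ is equivalent to the conjunction of (i) $s(K/k) = s(k)$ and (ii) $\Gal(L(K)\tilk/\tilk)$ is finite. Both hold at $K_0$ by hypothesis.

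For (i), on the open set $\EE_{\ram}(k)$ each intersection $\Gal(\tilk/K) \cap I_\pe(\tilk/k)$ has constant $\Z_p$-rank equal to $\rank_{\Z_p} I_\pe(\tilk/k) - 1$, by the transversality $\Gal(\tilk/K) + I_\pe(\tilk/k) = \Gal(\tilk/k)$ that characterizes $\EE_{\ram}(k)$. Hence the variation in $\rank_{\Z_p} \sum_\pe I_\pe(\tilk/K)$ arises only from ``general-position'' interaction among the summands. Because linear independence over $\Q_p$ of continuously varying $\Z_p$-submodules is lower semi-continuous in the Grassmannian topology, the maximum value of this rank, attained at $K_0$, persists in an open neighborhood of $K_0$ in $\EE_{\ram}(k)$, yielding openness of (i).

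The main obstacle is the openness of (ii). Here $\Gal(L(K)\tilk/\tilk)$ is a quotient of the coinvariants $X(\tilk)/I_K X(\tilk)$, where $I_K \subset \Lambda(\tilk/k)$ is the augmentation ideal of $\Z_p[[\Gal(\tilk/K)]]$. The ideals $I_K$ vary continuously in the $\mathfrak{m}$-adic topology of $\Lambda(\tilk/k)$ as $K$ varies in $\EE(k)$, and $X(\tilk)$ is a compact finitely generated $\Lambda(\tilk/k)$-module. My plan is to combine these facts via a Nakayama-type compactness argument, together with the arithmetic description of $L(K) \cdot \tilk$ as the maximal subfield $F$ of $L(\tilk)$ for which $F/K$ is abelian. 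The delicate point is that the hypothesis at $K_0$ controls only the finiteness of the quotient $\Gal(L(K_0)\tilk/\tilk)$, which may be strictly smaller than $X(\tilk)/I_{K_0} X(\tilk)$, so a purely module-theoretic semi-continuity argument is insufficient and one likely needs to supplement it with an arithmetic step relating the unramified abelian pro-$p$ extensions of $K$ and of $K_0$ when the two fields are close enough.
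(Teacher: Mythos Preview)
Your short exact sequence and the reformulation into conditions (i) and (ii) are correct (up to the minor point that $\Gal(\tilk\cap L(K)/K)$ need not be $\Z_p$-free, though its rank is indeed $s(K/k)$). The argument for (i) is plausible, if sketchy. But you yourself identify the real problem: you have no proof of the openness of (ii), and your closing paragraph is an acknowledgment that the module-theoretic route via $X(\tilk)$ does not close. The quotient $\Gal(L(K)\tilk/\tilk)$ is \emph{not} the coinvariant $X(\tilk)_{\Gal(\tilk/K)}$ but a further quotient of it by inertia data depending on $K$, so neither Nakayama nor semi-continuity of coinvariants gives the conclusion. This is a genuine gap: as written, the proposal is an outline that stops precisely at the hard step.

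The paper's proof takes a completely different and much shorter route that avoids $X(\tilk)$ altogether. It proves a Fukuda-type stabilization criterion (Proposition~\ref{prop_fukuda}): for $K\in\EE_{\ram}(k)$ with totally ramified tail above $k_{n_0}$, the module $X(K)$ is finitely generated over $\Z_p$ of rank $s(K/k)$ if and only if $\sharp X(k_{n+1})=p^{s(K/k)}\sharp X(k_n)$ for some $n\ge n_0$. Given $K_0$ in the set, this produces a finite level $n$ at which the class-number ratio is $p^{s(k)}$. Any $K$ with $[K\cap K_0:k]\ge p^{n+1}$ shares the same $(n{+}1)$-st layer, hence satisfies the same numerical condition, and the criterion (together with $s(K/k)\ge s(k)$) forces $X(K)$ to be finitely generated of rank $s(k)$. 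Thus openness reduces to a statement about class numbers at a single pair of finite layers, and the delicate variation of $\Gal(L(K)\tilk/\tilk)$ never enters.
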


In the rest of this section, we assume Theorems \ref{thm_descent_step}, \ref{thm_characteristic}, and  \ref{thm_openness}.

\begin{proof}[Proof of Theorem \ref{thm_main}]
Recall that for $K \in \EE(k)$, $\mu(K/k)=0$ if and only if $X(K)$ is finitely generated over $\Z_p$ 
and in that case $\lambda(K/k) = \rank_{\Z_p} X(K)$ (see \cite[Proposition 13.23 and Proposition 13.25]{washington}).
Note that
\begin{itemize}
\item $X(K)^{\dagger} \sim 0$ for weakly almost all $K \in \EE(k)$
by $X(\tilk) \sim 0$, Theorem \ref {thm_descent_step} (1), and Lemma \ref{lem_induction},
\item $s(K/k) = s(k)$ for generic $K \in \EE(k)$ by Proposition \ref{prop_almost_ram} (3),
\item $K/k$ is arithmetically semi-simple for generic $K \in \EE(k)$ by $s'(k) = 0$ and Corollary \ref{cor_jaulent}.
\end{itemize}
By Lemma \ref{lem_both} and Remark \ref{rem_generic}, all of the above properties simultaneously hold for weakly almost all $K \in \EE(k)$.
Therefore by Theorem \ref{thm_characteristic}, 
for weakly almost all $K \in \EE(k)$, if $K \in \EE_{\ns}(k)$ then $X(K)$ is a finitely generated $\Z_p$-module of rank $s(k)$.
Finally Theorem \ref{thm_openness} implies the assertion.
\end{proof}

\begin{proof}[Proof of Theorem \ref{thm_easy}]
Since $k$ is abelian, $s'(k) = 0$ as already remarked after Conjecture \ref{conj_gross}.
Therefore Theorem \ref{thm_main} and Remark \ref{rem_generic} implies the first assertion.
As explained in Example \ref{eg_s_ram}, $s(k) = 0$ if $p$ does not split in $k/\Q$ and
$s(k) = d(k)-1$ if $p$ splits completely in $k/\Q$. 
It is known that if $k$ is abelian, Leopoldt's Conjecture holds, namely the Leopoldt's defect $\delta(k, p) = 0$ (\cite{brumer}).
Hence $d(k) = [k:\Q]/2 + 1$, which completes the proof of Theorem \ref{thm_easy}.
\end{proof}

As other applications of Theorem \ref{thm_descent_step}, we obtain the following corollaries.

\begin{cor}\label{cor_rank_2}
Suppose $d(k) \geq 2$.
If GGC holds for $(k, p)$,
then $X(K^{(2)}) \sim 0$ for weakly almost all $\Z_p^2$-extensions $K^{(2)}$ of $k$.
\end{cor}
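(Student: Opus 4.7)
The plan is to apply Theorem \ref{thm_descent_step} (1) iteratively from the $\Z_p^{d(k)}$-extension $\tilk$ down to $\Z_p^2$-extensions, using Lemma \ref{lem_induction} at each step to propagate the ``weakly almost all'' condition. The key observation that makes the descent trivially collapse is that, by the convention after Lemma \ref{lem_dagger}, for any $i \geq 2$ we have $\Lambda^{\dagger}(K^{(i)}/k) = \Lambda(K^{(i)}/k)$ and $X(K^{(i)})^{\dagger} = X(K^{(i)})$. Hence ``$X(K^{(i)})^{\dagger} \sim 0$'' is the same as ``$X(K^{(i)}) \sim 0$'' throughout the range $i \geq 2$, so the dagger-localization in Theorem \ref{thm_descent_step} (1) causes no loss.

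Set $d = d(k) \geq 2$. If $d = 2$, then $\tilk$ itself is the unique $\Z_p^2$-extension of $k$, and GGC gives $X(\tilk) \sim 0$, so there is nothing further to prove. Assume $d \geq 3$. I would prove by downward induction on $i$, for $2 \leq i \leq d-1$, that $X(K^{(i)}) \sim 0$ for weakly almost all $\Z_p^i$-extensions $K^{(i)} \subset \tilk$ of $k$. For the base case $i = d-1$, apply Theorem \ref{thm_descent_step} (1) to $K^{(d)} = \tilk$: since $X(\tilk) \sim 0$ by GGC, generic (and hence weakly almost all, by Remark \ref{rem_generic}) $\Z_p^{d-1}$-extensions $K^{(d-1)} \subset \tilk$ satisfy $X(K^{(d-1)})^{\dagger} \sim 0$, which equals $X(K^{(d-1)}) \sim 0$ since $d-1 \geq 2$.

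For the inductive step, assume the conclusion holds at level $i+1$ (with $2 \leq i \leq d-2$), and apply Lemma \ref{lem_induction} with $(d', d'') = (i+1, i)$, with property $P(K^{(i+1)})$ being ``$X(K^{(i+1)}) \sim 0$'' and $Q(K^{(i)})$ being ``$X(K^{(i)}) \sim 0$''. Condition (a) of the lemma is exactly the inductive hypothesis. For condition (b): given any $K^{(i+1)} \subset \tilk$ with $X(K^{(i+1)}) \sim 0$, Theorem \ref{thm_descent_step} (1) yields that generic (thus weakly almost all) $\Z_p^i$-extensions $K^{(i)} \subset K^{(i+1)}$ satisfy $X(K^{(i)})^{\dagger} \sim 0$, which is $X(K^{(i)}) \sim 0$ as $i \geq 2$. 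Thus Lemma \ref{lem_induction} yields $X(K^{(i)}) \sim 0$ for weakly almost all $K^{(i)} \subset \tilk$. Specializing to $i = 2$ gives the corollary.

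There is no real obstacle: the hard work is already packaged into Theorem \ref{thm_descent_step} and Lemma \ref{lem_induction}. The only care one must take is to notice that the $\dagger$-construction in Theorem \ref{thm_descent_step} (1) is a nontrivial localization only when the resulting extension is a $\Z_p$-extension (i.e.\ $i = 1$), and the corollary asks only about $\Z_p^2$-extensions, so the localization never enters the chain. Also note that ``generic implies weakly almost all,'' so the conclusion of Theorem \ref{thm_descent_step} (1), which is in fact generic, is more than enough to feed into Lemma \ref{lem_induction}.
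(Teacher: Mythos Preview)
Your proof is correct and follows exactly the approach the paper intends: iterate Theorem \ref{thm_descent_step} (1) from $\tilk$ down to $\Z_p^2$-extensions, using Lemma \ref{lem_induction} at each step and the convention $\Lambda^{\dagger}(K^{(i)}/k)=\Lambda(K^{(i)}/k)$ for $i\geq 2$ so that the dagger plays no role. The paper's own proof is the one-line statement ``This corollary follows from Theorem \ref{thm_descent_step} (1) using Lemma \ref{lem_induction} inductively,'' which your argument unpacks in full.
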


\begin{proof}
This corollary follows from Theorem \ref{thm_descent_step} (1) using Lemma \ref{lem_induction} inductively.
\end{proof}

\if0
Recall the following known result.
\begin{prop}[{\cite[Proposition 4.B]{minardi}}]\label{prop_minardi}
If there exists a $\Z_p^2$-extension $K^{(2)}$ of $k$ such that
$p$ splits finitely in $K^{(2)}$ and $X(K^{(2)}) \sim 0$, 
then $X(\tilk) \sim 0$.
\end{prop}
Consequently we have the converse of Proposition \ref{prop_minardi}.
\fi

\begin{cor}\label{cor_equivalence}
Suppose $d(k) \geq 2$.
The following are equivalent.
\begin{enumerate}
\item[$(i)$] GGC holds for $(k, p)$ and Assumption \ref{ass_decomp} holds.
\item[$(ii)$] $X(K^{(2)}) \sim 0$ and $p$ splits finitely in $K^{(2)}/\Q$ for weakly almost all $\Z_p^2$-extensions $K^{(2)}$ of $k$.
\item[$(iii)$] $X(K^{(2)}) \sim 0$ and $p$ splits finitely in $K^{(2)}/\Q$ for at least one $\Z_p^2$-extension $K^{(2)}$ of $k$.
\end{enumerate}
\end{cor}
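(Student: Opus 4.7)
The plan is to establish the chain of implications $(i) \Rightarrow (ii) \Rightarrow (iii) \Rightarrow (i)$.

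For $(i) \Rightarrow (ii)$, I will combine two ``weakly almost all'' statements via Lemma \ref{lem_both}. First, GGC gives $X(\tilk) \sim 0$, so Corollary \ref{cor_rank_2} yields $X(K^{(2)}) \sim 0$ for weakly almost all $\Z_p^2$-extensions $K^{(2)}$ of $k$. Second, Assumption \ref{ass_decomp} together with Proposition \ref{prop_almost_decomp_2} (taking $K^{(d)} = \tilk$ and $i = 2$) shows that $p$ splits finitely in generic—hence by Remark \ref{rem_generic} in weakly almost all—$\Z_p^2$-extensions of $k$. The combined statement is then $(ii)$.

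For $(ii) \Rightarrow (iii)$, I will use a measure-theoretic observation. Since $d(k) \geq 2$, the Grassmann manifold $\Gr(2, \Gal(\tilk/k))$ is non-empty and carries non-zero total measure by construction (it is the pushforward of a probability measure under a surjection). If $(iii)$ were false, every $K^{(2)}$ would violate the joint property, so the entire space would itself be a measurable subset of the bad set; the defining condition of ``weakly almost all'' would then force the total measure to vanish, a contradiction.

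For $(iii) \Rightarrow (i)$, the pseudo-nullity of $X(\tilk)$—that is, GGC for $(k,p)$—is exactly the content of \cite[Proposition 4.B]{minardi}, whose hypothesis is precisely the existence of such a $K^{(2)}$. For Assumption \ref{ass_decomp}, the hypothesis that $p$ splits finitely in $K^{(2)}/\Q$ means that, for each $\pe \in S_p(k)$, the decomposition group $D_{\pe}(K^{(2)}/k)$ has finite index in $\Gal(K^{(2)}/k) \cong \Z_p^2$, hence $\Z_p$-rank two. Since $D_{\pe}(K^{(2)}/k)$ is the image of $D_{\pe}(\tilk/k)$ under the natural surjection $\Gal(\tilk/k) \twoheadrightarrow \Gal(K^{(2)}/k)$, we conclude $\rank_{\Z_p} D_{\pe}(\tilk/k) \geq 2$ for every $\pe \in S_p(k)$, as required. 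The main obstacle in this direction is the GGC half, which we outsource to Minardi's result; the decomposition-group half is then a routine calculation.
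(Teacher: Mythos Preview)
Your proof is correct and follows essentially the same route as the paper: the chain $(i)\Rightarrow(ii)\Rightarrow(iii)\Rightarrow(i)$, with $(i)\Rightarrow(ii)$ obtained by combining Corollary~\ref{cor_rank_2} and Proposition~\ref{prop_almost_decomp_2} via Lemma~\ref{lem_both}, and $(iii)\Rightarrow(i)$ resting on \cite[Proposition~4.B]{minardi}. You supply two details the paper leaves implicit---the positivity-of-measure argument making $(ii)\Rightarrow(iii)$ nonvacuous, and the separate verification of Assumption~\ref{ass_decomp} (Minardi's result only yields the GGC half)---both of which are handled correctly.
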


\begin{proof}
The implication $(i) \Rightarrow (ii)$ follows from the combination of Corollary \ref{cor_rank_2} and Proposition \ref{prop_almost_decomp_2}, using Lemma \ref{lem_both}.
It is trivial that $(ii) \Rightarrow (iii)$.
\cite[Proposition 4.B]{minardi} shows that $(iii) \Rightarrow (i)$.
\end{proof}

\begin{cor}
Let $K^{(d)}$ be a $\Z_p^d$-extension of $k$ such that $\rank_{\Z_p} D_{\pe}(K^{(d)}/k) \geq 2$ for every $\pe \in S_p(k)$.
Suppose that $X(K^{(d)}) \sim \bigoplus_{l=1}^t \Lambda(K^{(d)}/k)/(f_l)$.
Then $X(K)^{\dagger} \sim \bigoplus_{l=1}^t \Lambda^{\dagger}(K/k)/(\overline{f_l})$ for weakly almost all $\Z_p$-extensions $K \subset K^{(d)}$ of $k$,
where $\overline{f_l}$ denotes the natural image of $f_l$.
\end{cor}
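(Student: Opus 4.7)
The plan is to proceed by induction on $d$. The base case $d = 1$ is immediate: the only $\Z_p$-extension contained in $K^{(d)}$ is $K^{(d)}$ itself, and tensoring the given pseudo-isomorphism of $\Lambda(K/k)$-modules with the flat algebra $\Lambda^{\dagger}(K/k)$ yields the claimed pseudo-isomorphism of $\Lambda^{\dagger}(K/k)$-modules.

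For the induction step, I would apply Theorem \ref{thm_descent_step} (2) with $i+1 = d$. The decomposition-group hypothesis on $K^{(d)}$ is exactly what the theorem asks for, so for generic $\Z_p^{d-1}$-extensions $K^{(d-1)} \subset K^{(d)}$ we obtain
\[
X(K^{(d-1)})^{\dagger} \sim \bigoplus_{l=1}^t \Lambda^{\dagger}(K^{(d-1)}/k)/(\overline{f_l}).
\]
When $d = 2$ this is already the desired conclusion for the $\Z_p$-extension $K^{(d-1)} = K$, and the implication ``generic $\Rightarrow$ weakly almost all'' in Remark \ref{rem_generic} closes the case. When $d \geq 3$, the convention $\Lambda^{\dagger}(K^{(d-1)}/k) = \Lambda(K^{(d-1)}/k)$ makes the above a decomposition of $X(K^{(d-1)})$ itself, of exactly the form to which the inductive hypothesis applies at level $d - 1$. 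To invoke that hypothesis, I still need the decomposition-group condition $\rank_{\Z_p} D_{\pe}(K^{(d-1)}/k) \geq 2$ for every $\pe \in S_p(k)$, which Proposition \ref{prop_almost_decomp_2} supplies for generic $\Z_p^{d-1}$-extensions $K^{(d-1)} \subset K^{(d)}$ (using $d-1 \geq 2$). Intersecting the two generic conditions via Lemma \ref{lem_both}, the inductive hypothesis produces, for each such $K^{(d-1)}$, the required decomposition of $X(K)^{\dagger}$ for weakly almost all $\Z_p$-extensions $K \subset K^{(d-1)}$. Finally, Lemma \ref{lem_induction} with $d' = d-1$ and $d'' = 1$ transfers this to weakly almost all $\Z_p$-extensions $K \subset K^{(d)}$, completing the induction.

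No serious obstacle is expected, since every ingredient — the single-step descent with prescribed elementary factors, the persistence of the decomposition-group condition under descent, and the conversion of ``generic'' into ``weakly almost all'' compatible with Lemma \ref{lem_induction} — is already in place in the preceding sections. The only minor verification is that the finitely many $\Z_p^{d-1}$-extensions $K^{(d-1)} \subset K^{(d)}$ for which some $\overline{f_l}$ vanishes can be swept into the exceptional closed null set; this is automatic because a finite subset of a positive-dimensional $p$-adic Grassmann manifold is closed and has measure zero, so the corresponding pseudo-isomorphism makes sense generically.
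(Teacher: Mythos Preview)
Your proof is correct and follows exactly the approach the paper has in mind: the paper's own proof is the one-line ``This corollary follows from Theorem \ref{thm_descent_step} (2) and Proposition \ref{prop_almost_decomp_2} using Lemma \ref{lem_induction} inductively,'' and your argument is precisely the unpacking of that sentence. The only cosmetic remark is that the base case $d=1$ is vacuous (the decomposition-group hypothesis forces $d\geq 2$), so the induction really starts at $d=2$, which you already handle directly via Theorem \ref{thm_descent_step} (2).
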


\begin{proof}
This corollary follows from Theorem \ref{thm_descent_step} (2) and Proposition \ref{prop_almost_decomp_2} using Lemma \ref{lem_induction} inductively.
\end{proof}

\if0
The following corollary can be proved directly without the theorems in this paper,
but it seems not appeared explicitly and we deduce it from  

\begin{cor}\label{cor_lifting}
Suppose $d(k) \geq 2$.
Let $k'$ be an intermediate number field of $\tilk/k$.
If Assumption \ref{ass_decomp} and GGC hold for $(k,p)$, 
then they also hold for $(k',p)$.
\end{cor}

\begin{proof}
We can suppose that $k'/k$ is a cyclic extension.
Therefore Corollary \ref{cor_equivalence} (i) $\Rightarrow$ (ii) implies that there exists a $\Z_p^2$-extension $K^{(2)}$ of $k$
containing $k'$ such that $X(K^{(2)}) \sim_{\Lambda(K^{(2)}/k)} 0$ and $p$ splits finitely in $K^{(2)}/\Q$. 
Then $X(K^{(2)}) \sim_{\Lambda(K^{(2)}/k')} 0$ by \cite[Proposition 4.A]{minardi}.
Hence Corollary \ref{cor_equivalence} (iii) $\Rightarrow$ (i) implies the assertion.
\end{proof}

As an example, if GGC holds for $(\Q(\mu_p), p)$, then GGC holds for $(\Q(\mu_{p^n}), p)$ for any positive integer $n$.
Note that even if $d(k) = 1$, the assertion of Corollary \ref{cor_lifting} holds 
as long as Leopoldt's Conjecture holds for $(k', p)$.
\fi

\section{Proof of Theorem \ref{thm_descent_step}}\label{sec_descent}
The most part of the proof of Theorem \ref{thm_descent_step} consists of module theoretic arguments.
See \cite[\S 6]{matsumura} for the basic materials such as primary decompositions.

\begin{prop}\label{prop_module}
Let $\Lambda$ be a regular local ring and 
$S$ an element of $\Lambda$ such that $\Lambda/S \Lambda$ is again a regular local ring.
Let $X$ be a finitely generated $\Lambda$-module such that $\height(\Ann_{\Lambda}(X)) \geq 2$.
Take a shortest primary decomposition $Y_1 \cap \dots \cap Y_r = 0$ of the $\Lambda$-submodule $0 \subset X$
and put $P_j = \sqrt{\Ann_{\Lambda}(X/Y_j)}$, which are distinct associated primes of $X$.

\begin{enumerate}
\item[$(1)$] Define a ${\Lambda}$-module $Z$ by the exact sequence
\[
0 \to X \to \bigoplus_{j=1}^r X/Y_j \to Z \to 0.
\]
Then we have $\height(\Ann_{\Lambda} (Z)) \geq 3$.
\item[$(2)$] We have pseudo-isomorphisms
\[
X[S] \sim_{\Lambda/S \Lambda} \bigoplus_{j=1}^r (X/Y_j)[S]
\]
and
\[
X/SX \sim_{\Lambda/S \Lambda} \bigoplus_{j=1}^r (X/Y_j)/S(X/Y_j),
\]
where $X[S] = \{ x \in X \mid Sx = 0 \}$ and so on.
\end{enumerate}
\end{prop}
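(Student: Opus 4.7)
The plan is to prove (1) by a localization argument at primes of $\Lambda$ of small height, and (2) by applying the snake lemma to multiplication by $S$ on the short exact sequence that defines $Z$. The common input is that every associated prime $P_j$ of $X$ has $\height(P_j) \geq 2$: this follows from the inclusions $\Ann_\Lambda(X) \subseteq \Ann_\Lambda(X/Y_j) \subseteq P_j$ together with the hypothesis $\height(\Ann_\Lambda(X)) \geq 2$.

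For part (1), I plan to show $Z_P = 0$ for every prime $P$ of $\Lambda$ with $\height(P) \leq 2$, which is equivalent to $\height(\Ann_\Lambda(Z)) \geq 3$. In any noetherian ring, $P_i \subsetneq P_j$ forces $\height(P_j) \geq \height(P_i) + 1$, so the bound $\height(P_i) \geq 2$ together with $\height(P) \leq 2$ implies that $P \supseteq P_i$ is possible only when $P = P_i$. Consequently such a $P$ either contains no $P_j$, in which case $X_P = 0$ and $(X/Y_j)_P = 0$ for every $j$ (since $\mathrm{Ass}(X/Y_j) = \{P_j\}$), forcing $Z_P = 0$; or $P = P_j$ for a unique $j$. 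In the latter case $(X/Y_i)_{P_j} = 0$ for $i \neq j$ by the same height argument, and the localized sequence collapses to
\[
0 \to X_{P_j} \to (X/Y_j)_{P_j} \to Z_{P_j} \to 0,
\]
in which the surviving map is the localization of the canonical surjection $X \to X/Y_j$; hence $Z_{P_j} = 0$.

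For part (2), the snake lemma applied to multiplication by $S$ on $0 \to X \to \bigoplus_j X/Y_j \to Z \to 0$ yields the six-term exact sequence
\[
0 \to X[S] \to \bigoplus_j (X/Y_j)[S] \to Z[S] \to X/SX \to \bigoplus_j (X/Y_j)/S(X/Y_j) \to Z/SZ \to 0.
\]
Both claimed pseudo-isomorphisms then reduce to the pseudo-nullity of $Z[S]$ and $Z/SZ$ as $\Lambda/S\Lambda$-modules. Their $\Lambda$-annihilators contain $\Ann_\Lambda(Z)$, which has height $\geq 3$ in $\Lambda$ by part (1); since $\Lambda$ and $\Lambda/S\Lambda$ are both regular local and $S$ is a regular parameter, $\height_{\Lambda/S\Lambda}(Q/S\Lambda) = \height_\Lambda(Q) - 1$ for every prime $Q \supseteq S\Lambda$, so the images of these annihilators in $\Lambda/S\Lambda$ have height $\geq 2$, yielding the required pseudo-nullity.

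The main obstacle will be the localization step in (1), in particular verifying $(X/Y_i)_{P_j} = 0$ for $i \neq j$ at a height-$2$ prime $P_j$. Everything there hinges on combining $\height(P_i) \geq 2$ with the height inequality for strict inclusions of primes to rule out $P_i \subsetneq P_j$; once this is secured, the snake-lemma step in (2) is formal bookkeeping.
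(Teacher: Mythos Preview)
Your proposal is correct and follows essentially the same approach as the paper: the localization argument for (1) and the snake-lemma argument for (2) match the paper's proof almost line for line. The only cosmetic difference is that the paper invokes the catenary property of $\Lambda$ to pass from $\height_\Lambda \geq 3$ to $\height_{\Lambda/S\Lambda} \geq 2$, whereas you phrase this via the regular-parameter identity $\height_{\Lambda/S\Lambda}(Q/S\Lambda) = \height_\Lambda(Q) - 1$; these are the same fact.
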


\begin{proof}
(1) This assertion is a direct generalization of \cite[Lemma 2]{ozaki}.
Let $P$ be any prime ideal of ${\Lambda}$ with $\height(P) \leq 2$ and we show that $Z_P = 0$, 
where $Z_P$ denotes the localization of $Z$ at $P$.
Observe that
\[
(X/Y_j)_P \neq 0 \Leftrightarrow P \supset \Ann_{\Lambda}(X/Y_j) \Leftrightarrow P \supset P_j \Leftrightarrow P = P_j,
\]
where in the last equivalence we used that $\height(P) \leq 2 \leq \height(P_j)$.
Hence $P \neq P_j$ implies that $(X/Y_j)_P = 0$.
Therefore the localization of the given short exact sequence at $P$ implies that $Z_P = 0$, as asserted.

(2) The snake lemma applied to the short exact sequence in (1) induces an exact sequence of $\Lambda/S \Lambda$-modules
\[
0 \to X[S] \to \bigoplus_{j = 1}^r (X/Y_j)[S] \to Z[S] \to X/SX \to \bigoplus_{j=1}^r (X/Y_j)/S(X/Y_j) \to Z / SZ \to 0.
\]
Since
\[
\Ann_{\Lambda/S\Lambda}(Z[S]) = (\Ann_{\Lambda}(Z[S]))/(S)
\]
and $\Lambda$ is a catenary ring, $\height (\Ann_{\Lambda}(Z[S])) \geq 3$ implies that 
$\height(\Ann_{\Lambda/S\Lambda}(Z[S])) \geq 2$.
Similarly we have $\height(\Ann_{\Lambda/S\Lambda}(Z/SZ)) \geq 2$.
This completes the proof.
\end{proof}

\begin{prop}\label{prop_module_2}
In the situation in Proposition \ref{prop_module},
suppose furthermore that $P = \sqrt{\Ann_{\Lambda}(X)}$ is a prime ideal of $\Lambda$.
\begin{enumerate}
\item[$(1)$] We have $\sqrt{\Ann_{\Lambda}(X/SX)} = \sqrt{P+S \Lambda}$.
\item[$(2)$] $\height( \Ann_{\Lambda/S\Lambda}(X/S X)) \leq 1$ if and only if $\height (P) = 2$ and $S \in P$.
\item[$(3)$] $\height( \Ann_{\Lambda/S\Lambda}(X[S])) \leq 1$ if and only if $\height (P) = 2$ and $S \in P$.
\end{enumerate}
\end{prop}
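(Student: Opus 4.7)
The plan is to compute the relevant annihilators via their supports (minimal primes) and translate heights between $\Lambda$ and $\overline{\Lambda} := \Lambda/S\Lambda$, using the standard fact that in the Cohen--Macaulay catenary ring $\Lambda$ every prime $Q \supseteq S\Lambda$ satisfies $\height(Q/S\Lambda) = \height(Q) - 1$. For (1), I first observe that $\Ann_\Lambda(X) + S\Lambda \subseteq \Ann_\Lambda(X/SX)$ yields $\sqrt{P + S\Lambda} \subseteq \sqrt{\Ann_\Lambda(X/SX)}$ immediately. The reverse inclusion reduces to showing that every prime $Q \supseteq P + S\Lambda$ satisfies $(X/SX)_Q \neq 0$. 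Because $\sqrt{\Ann_\Lambda(X)} = P$ is prime, $P$ is the unique minimal prime of $\Ann_\Lambda(X)$, so $Q \supseteq P$ already gives $X_Q \neq 0$; then Nakayama's lemma applied to the finitely generated $\Lambda_Q$-module $X_Q$, with $S$ in the maximal ideal $Q\Lambda_Q$, delivers $X_Q / S X_Q \neq 0$.

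For (2), the height translation combined with (1) shows that $\height(\Ann_{\overline{\Lambda}}(X/SX)) \leq 1$ is equivalent to the existence of a prime $Q$ of $\Lambda$ with $Q \supseteq P + S\Lambda$ and $\height(Q) \leq 2$. I would then split into two cases. If $S \in P$, then $P + S\Lambda = P$, the minimum such height is $\height(P)$, and the condition becomes $\height(P) = 2$ (using $\height(P) \geq 2$, which follows from $\height(\Ann_\Lambda(X)) \geq 2$ and $\sqrt{\Ann_\Lambda(X)} = P$). If $S \notin P$, any such $Q$ strictly contains $P$, forcing $\height(Q) \geq \height(P) + 1 \geq 3$, so the condition fails.

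For (3), the key input is the standard identity $\mathrm{Ass}_\Lambda(X[S]) = \mathrm{Ass}_\Lambda(X) \cap V(S)$ (if $\Ann_\Lambda(x) = Q$ then $x \in X[S]$ iff $S \in Q$). Since $\sqrt{\Ann_\Lambda(X)} = P$ is prime, $P$ is the unique isolated associated prime of $X$, and every embedded associated prime strictly contains $P$, hence has height $\geq 3$. If $S \in P$, then $P \in \mathrm{Ass}_\Lambda(X[S])$ is its unique minimal element, so $\sqrt{\Ann_\Lambda(X[S])} = P$ and $\height(\Ann_{\overline{\Lambda}}(X[S])) = \height(P) - 1$, which is $\leq 1$ iff $\height(P) = 2$. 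If $S \notin P$, every element of $\mathrm{Ass}_\Lambda(X[S])$ is an embedded associated prime of $X$, so every minimal prime of $\Ann_\Lambda(X[S])$ has height $\geq 3$, yielding $\height(\Ann_{\overline{\Lambda}}(X[S])) \geq 2$. The main technical point to watch is the translation between $\Lambda$- and $\overline{\Lambda}$-heights; once that is in place, each of (2) and (3) reduces to a short case analysis on whether $S \in P$.
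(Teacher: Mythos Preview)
Your argument is correct, but it takes a somewhat different route from the paper's in parts~(1) and~(3). For~(1), the paper proves the inclusion $\sqrt{\Ann_{\Lambda}(X/SX)} \subseteq \sqrt{P+S\Lambda}$ directly: from $a^N X \subset SX$ it applies the Cayley--Hamilton theorem to obtain a monic relation for $a^N$ with coefficients in $S\Lambda$ modulo $\Ann_{\Lambda}(X)$, hence $a^{NN'} \in P + S\Lambda$. Your localization/Nakayama argument is equally valid and perhaps more conceptual, trading the explicit determinant trick for the support characterization of the radical. For~(3), the paper does not invoke $\mathrm{Ass}_{\Lambda}(X[S]) = \mathrm{Ass}_{\Lambda}(X) \cap V(S)$; instead, in the case $S \in P$ it takes $N$ with $S^N X = 0$, filters $X$ by $X[S^m]$, and shows $\Ann_{\Lambda}(X[S]) \subseteq \Ann_{\Lambda}(X[S^m]/X[S^{m-1}])$ for each $m$, giving $\Ann_{\Lambda}(X[S])^N \subset \Ann_{\Lambda}(X)$ and hence $\sqrt{\Ann_{\Lambda}(X[S])} \subseteq P$. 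Your associated-primes identity is slicker and handles both directions uniformly, while the paper's filtration is more elementary and self-contained. Part~(2) is essentially the same in both treatments; you just spell out the case split more explicitly than the paper does.
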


\begin{proof}
(1) The inclusion $\supset$ is clear by definition.
For the other inclusion, we take any element $a \in \sqrt{\Ann_{\Lambda}(X/SX)}$.
Then there is a positive integer $N$ such that $a^N X \subset SX$.
A generalization of Cayley-Hamilton's theorem shows that 
there are a positive integer $N'$ and elements $c_1, \dots, c_{N'} \in S \Lambda$
such that $(a^N)^{N'} + c_1 (a^N)^{N'-1} + \dots + c_{N'} \in \Ann_{\Lambda}(X) \subset P$.
Therefore $a^{NN'} \in P + S \Lambda$ and $a \in \sqrt{P+S \Lambda}$, as claimed.

(2) This proposition is a direct generalization of \cite[Lemma 3]{ozaki}.
We have by (1)
\[
\sqrt{\Ann_{\Lambda/S\Lambda}(X/SX)} = \sqrt{\Ann_{\Lambda}(X/SX)}/S \Lambda = \sqrt{P+S \Lambda}/S \Lambda,
\]
which proves the assertion.

(3) As in (1), it is clear that $\sqrt{\Ann_{\Lambda}(X[S])} \supset \sqrt{P+(S)}$,
which implies the ``only if'' part.
But the other inclusion does not hold in general
(as a counter-example, consider $\Lambda = \Z_p[[T_1, T_2]], S = T_1$, and 
$X = \Z_p[[T_1, T_2]]/(p, T_2) = \F_p[[T_1]]$).

In order to prove the ``if'' part, we show that if $S \in P$ then $\sqrt{\Ann_{\Lambda}(X[S])} = P$.
Since $S \in P = \sqrt{\Ann_{\Lambda}(X)}$, there is a positive integer $N$ such that $S^N \in \Ann_{\Lambda}(X)$.
Consider the filtration
\[
0 \subset X[S] \subset X[S^2] \subset \dots \subset X[S^N] = X
\]
of $X$.
It can easily shown that $\Ann_{\Lambda}(X[S]) \subset \Ann_{\Lambda}(X[S^m]/X[S^{m-1}])$ for any positive integer $m$.
In fact, for any $a \in \Ann_{\Lambda}(X[S])$ and $x \in X[S^m]$, $S^{m-1} x \in X[S]$ implies that 
$S^{m-1} a x = a S^{m-1} x = 0$, which means that $ax \in X[S^{m-1}]$.
Therefore we have $\Ann_{\Lambda}(X[S])^N \subset \Ann_{\Lambda}(X)$, which shows that $\sqrt{\Ann_{\Lambda}(X[S])} \subset P$, as claimed.
\end{proof}

\begin{lem}\label{lem_module_3}
Let $i$ be a positive integer and put $\Lambda = \Z_p[[T_1, \dots, T_{i+1}]]$.
For $\alpha = (\alpha_2, \dots, \alpha_{i+1}) \in \Z_p^i$,
put $S_{\alpha} = (1+T_1)(1+T_2)^{\alpha_2} \dots (1+T_{i+1})^{\alpha_{i+1}} - 1 \in \Lambda$.
Let $P$ be a prime ideal of $\Lambda$ with $\height(P) = 2$.
If $i=1$, suppose that $P \not \supset ((1+T_1)^{p^N}-1, (1+T_2)^{p^N})$ for any positive integer $N$.
Then $S_{\alpha} \not\in P$ for generic $\alpha \in \Z_p^i$.
\end{lem}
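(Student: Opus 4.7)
The plan is to show that $B := \{ \alpha \in \Z_p^i \mid S_\alpha \in P \}$ is closed in $\Z_p^i$ and has Haar measure zero, which gives the generic condition. Closedness is immediate from continuity: the assignment $\alpha \mapsto S_\alpha$ is continuous into $\Lambda$ (via the convergent binomial expansions of $(1+T_j)^{\alpha_j}$), and any ideal of the Noetherian complete local ring $\Lambda$ is closed. The substance is the measure-zero statement, which I translate into a question about a closed subgroup of $\Z_p^i$.

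Let $A = \Lambda/P$ and write $u_j \in 1 + \mathfrak{m}_A$ for the image of $1+T_j$. Since $A$ is complete local, continuous $\Z_p$-exponentiation of principal units gives a continuous group homomorphism $\psi : \Z_p^i \to A^\times$, $\psi(\alpha) = \prod_{j=2}^{i+1} u_j^{\alpha_j}$; the image of $S_\alpha$ in $A$ equals $u_1 \psi(\alpha) - 1$, so $B = \psi^{-1}(u_1^{-1})$, which is either empty or a coset of the closed $\Z_p$-submodule $\ker(\psi) \subset \Z_p^i$. Such a coset is Haar null exactly when the $\Z_p$-rank of $\ker(\psi)$ is strictly less than $i$, so the only remaining case to handle is $\ker(\psi)$ having rank $i$ together with $B \neq \emptyset$.

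In that bad case, $\ker(\psi) \supset p^N \Z_p^i$ for some $N \geq 1$, which upon applying $\psi$ to the standard basis vectors scaled by $p^N$ yields $u_j^{p^N} = 1$ in $A$, i.e.\ $(1+T_j)^{p^N} - 1 \in P$, for $j = 2, \dots, i+1$. Choosing any $\alpha_0 \in B$ and using $u_1 = \psi(\alpha_0)^{-1} = \prod_{j=2}^{i+1} u_j^{-(\alpha_0)_j}$ together with the multiplicativity of $\Z_p$-exponentiation in $A^\times$ (valid because $A$ is complete local), one obtains $u_1^{p^N} = 1$, hence $(1+T_1)^{p^N} - 1 \in P$ as well. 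Thus $P$ contains $\mathfrak{a}_N := ((1+T_j)^{p^N} - 1)_{j=1}^{i+1}$.

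A height count then closes the argument. Reducing modulo $p$, $\mathfrak{a}_N$ becomes $(T_1^{p^N}, \dots, T_{i+1}^{p^N})$, so every $T_j$ is integral over $\Z_p$ in $\Lambda/\mathfrak{a}_N$, making $\Lambda/\mathfrak{a}_N$ a finitely generated $\Z_p$-module, hence of Krull dimension at most $1$. This forces $\height(\mathfrak{a}_N) \geq i+1$, and so $\height(P) \geq i+1$, contradicting $\height(P) = 2$ whenever $i \geq 2$. For $i = 1$, the resulting containment $P \supset ((1+T_1)^{p^N} - 1, (1+T_2)^{p^N} - 1)$ is exactly what the extra hypothesis excludes (modulo the apparent typo in the statement, where the ``$-1$'' seems to be missing from the second generator). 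The main point I expect to be subtle is the careful handling of the $\Z_p$-power operations in $A^\times$, making sure the deduction of $u_1^{p^N} = 1$ from the $u_j^{p^N} = 1$ is rigorous; this follows from the standard multiplicativity of continuous $\Z_p$-exponentiation in a complete local ring.
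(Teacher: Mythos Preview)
Your argument is correct and follows essentially the same route as the paper: both identify the bad set as a coset of the closed subgroup $\{\beta \in \Z_p^i \mid (1+T_2)^{\beta_2}\cdots(1+T_{i+1})^{\beta_{i+1}}-1 \in P\}$ (your $\ker\psi$), and both derive a contradiction from that subgroup having finite index by forcing $P \supset ((1+T_j)^{p^N}-1)_{j=1}^{i+1}$. Your version is slightly more explicit in two places the paper leaves to the reader, namely the closedness of the bad set and the height computation $\height(\mathfrak{a}_N) \geq i+1$ via $\dim(\Lambda/\mathfrak{a}_N)\leq 1$; and yes, the missing ``$-1$'' on $(1+T_2)^{p^N}$ in the statement is indeed a typo.
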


\begin{proof}
Put $A = \{ \alpha \in \Z_p^i \mid S_{\alpha} \in P \}$.
If $A$ is empty, we have nothing to do.
Suppose $A$ is non-empty and choose an element $\alpha' = (\alpha_2', \dots, \alpha_{i+1}') \in A$.
For any $\alpha \in \Z_p^i$, we have
\begin{eqnarray*}
S_{\alpha} - S_{\alpha'} &=& ((1+T_1) (1+T_2)^{\alpha_2} \dots (1+T_{i+1})^{\alpha_{i+1}} - 1) - ( (1+T_1) (1+T_2)^{\alpha_2'} \dots (1+T_{i+1})^{\alpha_{i+1}'} - 1)\\
&=& (1+T_1) (1+T_2)^{\alpha_2'} \dots (1+T_{i+1})^{\alpha_{i+1}'}\left((1+T_2)^{\alpha_2-\alpha_2'} \dots (1+T_{i+1})^{\alpha_{i+1}-\alpha_{i+1}'}-1\right).
\end{eqnarray*}
Put $B = \{ \beta = (\beta_2, \dots, \beta_{i+1}) \in \Z_p^i \mid (1+T_2)^{\beta_2} \dots (1+T_{i+1})^{\beta_{i+1}}-1 \in P\}$.
Obviously $B$ is a $\Z_p$-submodule of $\Z_p^i$ and
the above calculation shows that $A = B + \alpha'$.

We shall show that the index of $B$ in $\Z_p^i$ is infinite.
If not, there exists a positive integer $N$ such that $p^N\Z_p^i \subset B$.
This implies that $(1+T_j)^{p^N}-1 \in P$ for $2 \leq j \leq i+1$.
Since $S_{\alpha'} \in P$, it also follows that $(1+T_1)^{p^N}-1 \in P$.
Consequently $P \supset ((1+T_1)^{p^N}-1, \dots, (1+T_{i+1})^{p^N}-1)$,
which yields a contradiction.
\end{proof}

Let $k$ be a number field, $i$ a positive integer and $K^{(i+1)}$ a $\Z_p^{i+1}$-extension of $k$.
Let $X$ be a finitely generated $\Lambda(K^{(i+1)}/k)$-module.
For a $\Z_p^i$-extension $\KKi \subset \KKip$ of $k$,
we regard the coinvariant $X_{\Gal(K^{(i+1)}/K^{(i)})}$ and the invariant $X^{\Gal(K^{(i+1)}/K^{(i)})}$ as $\Lambda(\KKi/k)$-modules.

\begin{lem}\label{lem_module}
Suppose that $X$ is a pseudo-null $\Lambda(K^{(i+1)}/k)$-module.
Then $(X_{\Gal(K^{(i+1)}/K^{(i)})})^{\dagger} \sim 0$ and $(X^{\Gal(K^{(i+1)}/K^{(i)})})^{\dagger} \sim 0$ for generic $\Z_p^i$-extensions $K^{(i)} \subset K^{(i+1)}$ of $k$.
\end{lem}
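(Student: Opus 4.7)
The plan is to combine the primary-decomposition analysis of Propositions \ref{prop_module} and \ref{prop_module_2} with the generic-avoidance result of Lemma \ref{lem_module_3}. Set $\Lambda = \Lambda(K^{(i+1)}/k)$. Take a shortest primary decomposition $Y_1 \cap \cdots \cap Y_r = 0$ inside $X$ and put $P_j = \sqrt{\Ann_\Lambda(X/Y_j)}$; by the pseudo-nullity of $X$ each $P_j$ has height at least $2$. By Lemma \ref{lem_covering} it suffices to prove the conclusion in each of the finitely many open charts $U_{\{w\}}$ of $\Gr(i, \Gal(K^{(i+1)}/k))$ obtained after fixing a $\Z_p$-basis $\sigma_1, \ldots, \sigma_{i+1}$ of $\Gal(K^{(i+1)}/k)$. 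Fix such a $w$; in the chart $U_{\{w\}}$, the extension $K^{(i)}_\alpha$ is parameterized by $\alpha \in \Z_p^i$ with $\Gal(K^{(i+1)}/K^{(i)}_\alpha)$ topologically generated by $\tau_\alpha = \sigma_w \prod_{j \neq w} \sigma_j^{\alpha_j}$, and setting $S_\alpha = \tau_\alpha - 1$ gives $X_{\Gal(K^{(i+1)}/K^{(i)}_\alpha)} = X/S_\alpha X$ and $X^{\Gal(K^{(i+1)}/K^{(i)}_\alpha)} = X[S_\alpha]$.

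Next, Proposition \ref{prop_module} (2) yields pseudo-isomorphisms
\[
X/S_\alpha X \sim \bigoplus_j (X/Y_j)/S_\alpha (X/Y_j), \qquad X[S_\alpha] \sim \bigoplus_j (X/Y_j)[S_\alpha]
\]
of $\Lambda(K^{(i)}_\alpha/k)$-modules. Since the functor $(\bullet)^\dagger$ is exact and sends pseudo-null modules to pseudo-null modules, it is enough to prove that for generic $\alpha$ every summand is pseudo-null in the dagger sense. Proposition \ref{prop_module_2} (2), (3) pins the sole obstruction as $\height(P_j) = 2$ together with $S_\alpha \in P_j$: outside this bad coincidence both $\Ann_{\Lambda/S_\alpha}\bigl((X/Y_j)/S_\alpha (X/Y_j)\bigr)$ and $\Ann_{\Lambda/S_\alpha}\bigl((X/Y_j)[S_\alpha]\bigr)$ have height at least $2$ in $\Lambda(K^{(i)}_\alpha/k)$. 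For $i \geq 2$ this is already the required dagger-pseudo-nullity. For $i = 1$ the ring $\Lambda(K^{(1)}/k) \simeq \Z_p[[T]]$ is two-dimensional, so a height-$2$ annihilator must have radical equal to the maximal ideal $(p, T)$; but $T = \sigma - 1$ for a generator $\sigma$ of $\Gal(K^{(1)}/k)$, which is a unit of $\Lambda^\dagger(K^{(1)}/k)$, so the summand is dagger-zero.

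For $i \geq 2$ the argument is finished by invoking Lemma \ref{lem_module_3}: for generic $\alpha$ one has $S_\alpha \notin P_j$ for every $j$ with $\height(P_j) = 2$. The main obstacle is the $i = 1$ exceptional case, where some prime $P_j$ of height $2$ satisfies $P_j \supset ((1+T_1)^{p^N} - 1, (1+T_2)^{p^N} - 1)$ for some $N$; here Lemma \ref{lem_module_3} is inapplicable, and in fact $S_\alpha$ may lie in $P_j$ identically in $\alpha$. I overcome this by exploiting the dagger structure directly. Since $\Ann_\Lambda(X/Y_j) \supset P_j^M$ for some $M$, the annihilator of each $j$-th summand modulo $S_\alpha$ contains $(\overline{\sigma}_1^{p^N} - 1)^M$ and $(\overline{\sigma}_2^{p^N} - 1)^M$, where $\overline{\sigma}_l$ denotes the image of $\sigma_l$ in $\Gal(K^{(1)}_\alpha/k)$. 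A direct computation in the chart $U_{\{w\}}$ (with $w \in \{1,2\}$) shows that for the unique $w' \neq w$ one has $\overline{\sigma}_{w'}^{p^N} \neq 1$ identically in $\alpha$, so $(\overline{\sigma}_{w'}^{p^N} - 1)^M$ is a unit of $\Lambda^\dagger(K^{(1)}/k)$ and the $j$-th summand is automatically dagger-zero. Combining over the finitely many associated primes $P_j$ and the finitely many charts covering $\Gr(i, \Gal(K^{(i+1)}/k))$ yields the claim for generic $K^{(i)}$.
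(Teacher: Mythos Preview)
Your proof is correct and follows essentially the same approach as the paper's: reduce to charts via Lemmas \ref{lem_measure_calculation}--\ref{lem_covering}, split via the primary decomposition of Proposition \ref{prop_module}(2), identify the obstruction using Proposition \ref{prop_module_2}(2)(3), invoke Lemma \ref{lem_module_3} generically, and treat the $i=1$ exceptional primes $P_j \supset ((1+T_1)^{p^N}-1,(1+T_2)^{p^N}-1)$ by hand. The only notable difference is in that last step: the paper computes $\sqrt{\Ann_{\Lambda/S_\alpha}(X/S_\alpha X)}$ via Proposition \ref{prop_module_2}(1) and then appeals to Lemma \ref{lem_dagger}, whereas you bypass both by observing directly that $(\overline{\sigma}_{w'}^{p^N}-1)^M$ already lies in the annihilator and is a unit of $\Lambda^\dagger(K^{(1)}_\alpha/k)$---a slightly cleaner route to the same conclusion.
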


\begin{proof}
Choose a $\Z_p$-basis $\sigma_1, \dots, \sigma_{i+1}$ of $\Gal(K^{(i+1)}/k)$
and identify $\Lambda(K^{(i+1)}/k)$ with $\Z_p[[T_1, \dots, T_{i+1}]]$ so that $\sigma_j$ corresponds to $1 + T_j$.
For each $\alpha = (\alpha_2, \dots, \alpha_{i+1}) \in \Z_p^i$, 
let $K_{\alpha}$ be the sub $\Z_p^i$-extension of $K^{(i+1)}$ 
defined as the fixed field of $\langle \sigma_1 \sigma_2^{\alpha_2} \dots \sigma_{i+1}^{\alpha_{i+1}} \rangle$.
Then the map of Lemma \ref{lem_measure_calculation} is read as
\[
\begin{array}{ccccccc}
\Z_p^i &\hookrightarrow& \GL_{i+1}(\Z_p) &\twoheadrightarrow& \Gr(i,\Gal(K^{(i+1)}/k)) &=& \{ K^{(i)} \subset K^{(i+1)} \}.\\
\alpha & \mapsto & 
\begin{pmatrix}
1 & \begin{matrix}0 & \cdots & 0 \end{matrix} \\
\begin{matrix} \alpha_2 \\ \vdots \\ \alpha_{i+1} \end{matrix} & 1_i
\end{pmatrix}
& \mapsto & \langle \sigma_1  \sigma_2^{\alpha_2} \dots \sigma_{i+1}^{\alpha_{i+1}} \rangle
& \leftrightarrow &K_{\alpha}
\end{array}
\]
By Lemmas \ref{lem_measure_calculation} and \ref{lem_covering}, it is enough to show that 
$(X_{\Gal(K^{(i+1)}/K_{\alpha})})^{\dagger} \sim 0$ and 
$(X^{\Gal(K^{(i+1)}/K_{\alpha})})^{\dagger} \sim 0$
 for almost all $\alpha \in \Z_p^i$ with respect to the natural measure on $\Z_p^i$.

Put $S_{\alpha} = \sigma_1 \sigma_2^{\alpha_2} \dots \sigma_{i+1}^{\alpha_{i+1}} - 1 \in \Lambda(K^{(i+1)}/k)$.
Then $\Lambda(K_{\alpha}/k) = \Lambda(K^{(i+1)}/k)/(S_{\alpha}) = \Z_p[[T_2, \dots, T_{i+1}]]$ naturally and we have $X_{\Gal(K^{(i+1)}/K^{(i)})} = X/S_{\alpha} X$ and $X^{\Gal(K^{(i+1)}/K^{(i)})} = X[S_{\alpha}]$.
By Proposition \ref{prop_module} (2) and Lemma \ref{lem_both}, the assertions of this lemma is reduced to the case where $P = \sqrt{\Ann(X)}$ is a prime ideal.
By Proposition \ref{prop_module_2} (2)(3), we can suppose that $\height(P)=2$.

If $i \geq 2$ or $P \not \supset ((1+T_1)^{p^N}-1, (1+T_2)^{p^N}-1)$ for any positive integer $N$,
then by Lemma \ref{lem_module_3}, $S_{\alpha} \not\in P$ for generic $\alpha \in \Z_p^i$.
Therefore the assertion follows from Proposition \ref{prop_module_2} (2)(3) in this case.

Suppose that $i = 1$ and $P \supset ((1+T_1)^{p^N}-1, (1+T_2)^{p^N}-1)$ for some positive integer $N$.
Then by Proposition \ref{prop_module_2} (1), we have 
\[
\sqrt{\Ann_{\Lambda(K_{\alpha}/k)} (X/S_{\alpha} X)} = \sqrt{P + (S_{\alpha})}/(S_{\alpha}) \supset ((1 + T_2)^{p^N}-1),
\]
where the right hand side is seen as an ideal of $\Z_p[[T_2]]$.
Therefore $(X_{\Gal(K^{(2)}/K_{\alpha})})^{\dagger} \sim_{\Lambda^{\dagger}(K_{\alpha}/k)} 0$ by Lemma \ref{lem_dagger}.
Similarly 
\[
\sqrt{\Ann_{\Lambda(K_{\alpha}/k)} (X[S_{\alpha}])} \supset \sqrt{P + (S_{\alpha})}/(S_{\alpha}) \supset ((1 + T_2)^{p^N}-1)
\]
implies that $(X^{\Gal(K^{(2)}/K_{\alpha})})^{\dagger} \sim_{\Lambda^{\dagger}(K_{\alpha}/k)} 0$.
This completes the proof of the lemma.
\end{proof}

\begin{lem}\label{lem_module_4}
Let $f \in \Lambda(\KKip/k)$ be a nonzero element.
Then the natural image $\overline{f} \in \Lambda(K^{(i)}/k)$ of $f$ is nonzero for all but finitely many $\Z_p^i$-extensions $K^{(i)} \subset \KKip$ of $k$.
\end{lem}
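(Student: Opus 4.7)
The plan is to identify $\Lambda(\KKip/k)$ with the power-series ring $\Z_p[[T_1, \ldots, T_{i+1}]]$, which is a UFD, and exploit that a nonzero element of a Noetherian UFD lies in only finitely many height-one prime ideals. For a $\Z_p^i$-extension $\KKi \subset \KKip$ of $k$, set $H = \Gal(\KKip/\KKi)$, a rank-one $\Z_p$-direct summand of $G := \Gal(\KKip/k)$, and choose any topological generator $\sigma_H$ of $H$. Then the kernel of the natural surjection $\Lambda(\KKip/k) \twoheadrightarrow \Lambda(\KKi/k) = \Z_p[[G/H]]$ is the principal ideal $(\sigma_H - 1)$. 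Consequently, the condition $\overline{f} = 0$ is equivalent to $(\sigma_H - 1) \mid f$ in $\Lambda(\KKip/k)$.

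Next I would establish two claims. First, each ideal $(\sigma_H - 1)$ is a height-one prime: since $H$ is a rank-one direct summand of $G$, one can extend $\sigma_H$ to a $\Z_p$-basis of $G$, under which the isomorphism $\Lambda(\KKip/k) \cong \Z_p[[T_1, \dots, T_{i+1}]]$ sends $\sigma_H - 1$ to $T_1$, a prime element. Second, distinct $H$'s yield distinct principal ideals: if $H \neq H'$ and $\sigma_{H'} - 1 \in (\sigma_H - 1)$, then the image of $\sigma_{H'}$ in $G/H$ is trivial, so $\sigma_{H'} \in H$, and hence $H' \subset H$; since $H'$ is a direct summand of $G$, the quotient $G/H'$ is torsion-free, so the torsion module $H/H'$ must vanish and $H = H'$, a contradiction.

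The lemma then follows quickly: in the Noetherian UFD $\Lambda(\KKip/k)$, the nonzero element $f$ is contained in only finitely many height-one primes, namely those generated by its irreducible factors. Thus only finitely many of the ideals $(\sigma_H - 1)$ contain $f$, and by the injectivity established above, only finitely many $\Z_p^i$-extensions $\KKi \subset \KKip$ satisfy $\overline{f} = 0$. No serious obstacle is anticipated; the one point to handle with care is the direct-summand step, which uses crucially that $\Gr(i, G)$ parameterizes direct summands rather than arbitrary rank-one submodules.
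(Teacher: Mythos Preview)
Your proof is correct and follows essentially the same approach as the paper's: both use that $\Lambda(\KKip/k)$ is a UFD and identify each $\KKi$ with a height-one prime ideal $(\sigma_H - 1)$, concluding that a nonzero $f$ lies in only finitely many such ideals. The paper differs only cosmetically, working chart by chart on the Grassmannian (reducing to $f$ prime and observing $(f) = (S_\alpha)$ for at most one $\alpha$ per chart), whereas your global argument via the injectivity of $H \mapsto (\sigma_H - 1)$ avoids the chart parameterization entirely.
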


\begin{proof}
For $\alpha \in \Z_p^i$, define $K_{\alpha}$ and $S_{\alpha}$ as in the proof of Lemma \ref{lem_module}.
It is enough to show that $\overline{f} \in \Lambda(K_{\alpha}/k)$ is nonzero for all but finitely many $\alpha \in \Z_p^i$.
Since $\Lambda(\KKip/k)$ is a UFD, we can suppose that $f$ is a prime element.
Clearly $\overline{f} \in \Lambda(K_{\alpha}/k)$ is zero $\Leftrightarrow$ $f \in (S_{\alpha})$ $\Leftrightarrow$ $(f) = (S_{\alpha})$, which holds for at most one $\alpha$.
This proves the lemma.
\end{proof}

\begin{thm}\label{thm_module_2}
Let $i \geq 1$ and $K^{(i+1)}$ be a $\Z_p^{i+1}$-extension of $k$.
Let $X$ be a finitely generated torsion $\Lambda(K^{(i+1)}/k)$-module.
Suppose that
\[
X \sim \bigoplus_{l=1}^t \Lambda(K^{(i+1)}/k) / (f_l)
\]
where $f_l$ is a nonzero element of $\Lambda(K^{(i+1)}/k)$.
Then 
\[
(X_{\Gal(K^{(i+1)}/K^{(i)})})^{\dagger} \sim \bigoplus_{l=1}^t \Lambda^{\dagger}(K^{(i)}/k) / (\overline{f_l})
\]
for generic $\Z_p^i$-extensions $K^{(i)} \subset K^{(i+1)}$ of $k$.
\end{thm}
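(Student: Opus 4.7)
The plan is to pull the statement back to the elementary module $E := \bigoplus_{l=1}^t \Lambda(K^{(i+1)}/k)/(f_l)$, compute $E/S_\alpha E$ by hand, and control the error terms via Lemma \ref{lem_module}. Fix a $\Z_p$-basis $\sigma_1,\dots,\sigma_{i+1}$ of $\Gal(K^{(i+1)}/k)$ and, as in the proof of Lemma \ref{lem_module}, parameterize the $\Z_p^i$-extensions $K_\alpha \subset K^{(i+1)}$ by $\alpha\in\Z_p^i$, letting $S_\alpha = \sigma_1\sigma_2^{\alpha_2}\cdots\sigma_{i+1}^{\alpha_{i+1}}-1$, so that $\Lambda(K_\alpha/k) = \Lambda(K^{(i+1)}/k)/(S_\alpha)$ and $X_{\Gal(K^{(i+1)}/K_\alpha)} = X/S_\alpha X$. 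Since it suffices to verify the conclusion for generic $\alpha$ with respect to the Haar measure on $\Z_p^i$, I will work in this chart throughout.

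First I would fix a pseudo-isomorphism $\varphi : X \to E$ and form two short exact sequences $0\to A\to X\to X'\to 0$ and $0\to X'\to E\to B\to 0$, where $X' = \varphi(X)$ and $A = \ker\varphi$, $B = \operatorname{coker}\varphi$ are pseudo-null $\Lambda(K^{(i+1)}/k)$-modules. Applying the snake lemma to the multiplication-by-$S_\alpha$ endomorphism of each short exact sequence yields the six-term sequences
\[
0\to A[S_\alpha]\to X[S_\alpha]\to X'[S_\alpha]\to A/S_\alpha A\to X/S_\alpha X\to X'/S_\alpha X'\to 0
\]
and
\[
0\to X'[S_\alpha]\to E[S_\alpha]\to B[S_\alpha]\to X'/S_\alpha X'\to E/S_\alpha E\to B/S_\alpha B\to 0.
\]
By Lemma \ref{lem_module} applied to the pseudo-null modules $A$ and $B$, for generic $\alpha$ each of $(A[S_\alpha])^\dagger$, $(A/S_\alpha A)^\dagger$, $(B[S_\alpha])^\dagger$, $(B/S_\alpha B)^\dagger$ is pseudo-null over $\Lambda^\dagger(K_\alpha/k)$. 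Since $(-)^\dagger$ is exact (it is a localization) and Lemma \ref{lem_both} lets me impose these four generic conditions simultaneously, the induced maps $(X/S_\alpha X)^\dagger\to (X'/S_\alpha X')^\dagger$ and $(X'/S_\alpha X')^\dagger\to (E/S_\alpha E)^\dagger$ are both pseudo-isomorphisms, yielding $(X/S_\alpha X)^\dagger \sim (E/S_\alpha E)^\dagger$.

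Next I would compute
\[
E/S_\alpha E \;=\; \bigoplus_{l=1}^t \Lambda(K^{(i+1)}/k)/(f_l,S_\alpha) \;=\; \bigoplus_{l=1}^t \Lambda(K_\alpha/k)/(\overline{f_l}),
\]
using the natural identification $\Lambda(K^{(i+1)}/k)/(S_\alpha) = \Lambda(K_\alpha/k)$. By Lemma \ref{lem_module_4}, $\overline{f_l}\neq 0$ in $\Lambda(K_\alpha/k)$ for all but finitely many $\alpha$, so after tensoring with $\Lambda^\dagger(K_\alpha/k)$ I obtain $(E/S_\alpha E)^\dagger \simeq \bigoplus_{l=1}^t \Lambda^\dagger(K_\alpha/k)/(\overline{f_l})$. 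Combining this with the preceding paragraph gives the desired pseudo-isomorphism for generic $\alpha$.

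The main technical obstacle is purely bookkeeping: one must verify that the exceptional sets appearing at each step (the loci where Lemma \ref{lem_module} fails for $A$ or $B$, and the locus where some $\overline{f_l}$ vanishes) are all contained in a single closed measure-zero subset of $\Z_p^i$, so that the conclusion holds generically and not merely ``weakly almost everywhere''. Since Lemma \ref{lem_module}, together with the proof of Lemma \ref{lem_measure_calculation} and Lemma \ref{lem_covering}, delivers the conclusion in the stronger generic form on the chart $H$, and since the exceptional locus of Lemma \ref{lem_module_4} is finite (hence closed), this gluing causes no real difficulty; one only has to spell out that all estimates take place uniformly over the affine chart parameterizing a neighborhood of $N_0$, and then transport the conclusion along the $\Aut(\Gal(K^{(i+1)}/k))$-action to cover all of $\Gr(i,\Gal(K^{(i+1)}/k))$ as in Lemma \ref{lem_covering}.
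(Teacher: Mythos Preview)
Your proof is correct and follows essentially the same route as the paper: decompose the given pseudo-isomorphism into kernel/image/cokernel, apply the snake lemma (equivalently, take coinvariants of the two short exact sequences), kill the pseudo-null error terms via Lemma~\ref{lem_module}, and identify $E/S_\alpha E$ with $\bigoplus_l \Lambda(K_\alpha/k)/(\overline{f_l})$. The only cosmetic difference is that the paper stays in Grassmannian language throughout rather than passing to the $\alpha$-chart, and it invokes the fact that $\sim$ is an equivalence relation on finitely generated torsion modules to chain the two pseudo-isomorphisms; your explicit snake-lemma bookkeeping accomplishes the same thing.
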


\begin{proof}
Take a pseudo-isomorphism $X \to \bigoplus_{l=1}^t \Lambda(K^{(i+1)}/k) / (f_l)$ and let $X', X''$, and $X'''$ be the kernel, image, and the cokernel of the map.
Then we have the short exact sequences
\[
0 \to X' \to X \to X'' \to 0
\]
and
\[
0 \to X'' \to \bigoplus_{l=1}^t \Lambda(K^{(i+1)}/k) / (f_l) \to X''' \to 0.
\]
For any $\Z_p^i$-extension $K^{(i)} \subset \KKip$ of k, these yield exact sequences
\[
(X')_{\Gal(K^{(i+1)}/K^{(i)})} \to X_{\Gal(K^{(i+1)}/K^{(i)})} \to (X'')_{\Gal(K^{(i+1)}/K^{(i)})} \to 0
\]
and
\if0 \begin{eqnarray*}
&&(X''')^{\Gal(K^{(i+1)}/K^{(i)})} \to (X'')_{\Gal(K^{(i+1)}/K^{(i)})} \\
&\to&  \left(\bigoplus_{l=1}^t \Lambda(K^{(i+1)}/k) / (f_l)\right)_{\Gal(K^{(i+1)}/K^{(i)})} \to (X''')_{\Gal(K^{(i+1)}/K^{(i)})} \to 0
\end{eqnarray*}\fi
\[
(X''')^{\Gal(K^{(i+1)}/K^{(i)})} \to (X'')_{\Gal(K^{(i+1)}/K^{(i)})} 
\to \bigoplus_{l=1}^t \Lambda(K^{(i)}/k) / (\overline{f_l}) \to (X''')_{\Gal(K^{(i+1)}/K^{(i)})} \to 0.
\]
Then since $X'$ and $X'''$ are pseudo-null, Lemma \ref{lem_module} implies that for generic $\Z_i$-extensions $K^{(i)} \subset K^{(i+1)}$ of $k$, we have
\begin{eqnarray*}
(X_{\Gal(K^{(i+1)}/K^{(i)})})^{\dagger}
&\sim& ((X'')_{\Gal(K^{(i+1)}/K^{(i)})})^{\dagger} \\
&\sim& \left(\bigoplus_{l=1}^t \Lambda(K^{(i)}/k) / (\overline{f_l})\right)^{\dagger} \\
&\simeq& \bigoplus_{l=1}^t \Lambda^{\dagger}(K^{(i)}/k) / (\overline{f_l}),
\end{eqnarray*}
which proves the theorem.
\end{proof}

\begin{proof}[Proof of Theorem \ref{thm_descent_step}]
For a $\Z_p^i$-extension $K^{(i)} \subset K^{(i+1)}$ of $k$, we have
\[
X(K^{(i+1)})_{\Gal(K^{(i+1)}/K^{(i)})} = \Gal(\LL/K^{(i+1)}),
\]
where $\LL$ is the maximal abelian extension of $K^{(i)}$ contained in $L(K^{(i+1)})$.
We have a natural short exact sequence of $\Lambda(K^{(i)}/k)$-modules
\[
0 \to X(K^{(i+1)})_{\Gal(K^{(i+1)}/K^{(i)})} \to \Gal(\LL/K^{(i)}) \to \Gal(K^{(i+1)}/K^{(i)}) \to 0.
\]
By the definition of $\Lambda^{\dagger}(K^{(i)}/k)$, it can be seen that $\Gal(K^{(i+1)}/K^{(i)})^{\dagger} \sim 0$.
Therefore we have $\left(X(K^{(i+1)})_{\Gal(K^{(i+1)}/K^{(i)})}\right)^{\dagger} \sim \Gal(\LL/K^{(i)})^{\dagger}$, which implies by Lemma \ref{lem_module_4} and Theorem \ref{thm_module_2}
\[
\bigoplus_{l=1}^t \Lambda^{\dagger}(K^{(i)}/k) / (\overline{f_l}) \sim \Gal(\LL/K^{(i)})^{\dagger}
\]
for generic $\Z_p^i$-extensions $K^{(i)} \subset K^{(i+1)}$ of $k$.
Here we used the fact that the relation $\sim$ is an equivalence relation on
finitely generated torsion modules (see \cite[Remarks after Proposition (5.1.7)]{NSW}).

On the other hand, since $L(K^{(i)})$ is the maximal unramified extension of $K^{(i)}$ contained in $\LL$, we have a short exact sequence of $\Lambda(K^{(i)}/k)$-modules
\[
0 \to \sum_{\pe \in S_p(k)} \sum_{\Pe | \pe} I_{\Pe}(\LL/K^{(i)}) \to \Gal(\LL/K^{(i)}) \to X(K^{(i)}) \to 0,
\]
where $\Pe$ runs through the primes of $K^{(i)}$ above $\pe$ and $\sum$ means the generated closed subgroup.
In particular, we have a surjective homomorphism $\Gal(\LL/K^{(i)}) \twoheadrightarrow X(K^{(i)})$, which proves the assertion (1).

Next we prove the assertion (2).
We put $\II_{\pe}(\LL/\KKi) = \sum_{\Pe | \pe} I_{\Pe}(\LL/K^{(i)})$, which is a closed subgroup of $\Gal(\LL/\KKi)$.
By the above argument, it is enough to show that for every $\pe \in S_p(k)$ we have $\II_{\pe}(\LL/K^{(i)})^{\dagger} \sim 0$ for generic $\Z_p^i$-extensions $K^{(i)} \subset K^{(i+1)}$ of $k$.
\if0
Suppose $i = 1$ and choose any $\Z_p$-extension $K \subset K^{(2)}$ of $k$.
Since $\pe$ splits finitely in $K/k$ by the assumption, choose a non-negative integer $n$ such that 
every prime of $k_n$ above $p$ does not split in $K/k_n$, where $k_n$ is the $n$-th layer of $K/k$.
Then for any $\Pe | \pe$, the inertia group $I_{\Pe}(\LL/K^{(i)})$ is stable under the action of $\Gal(K/k_n)$
\fi

Choose a prime $\Pe_0 | \pe$ of $K^{(i)}$.
Since $\LL/K^{(i+1)}$ is unramified, we have $I_{\Pe_0}(\LL/K^{(i)}) \simeq I_{\Pe_0}(K^{(i+1)}/K^{(i)})$.
Choose a topological generator $\rho$ of $I_{\Pe_0}(\LL/K^{(i)})$.
Consider the $\Z_p$-homomorphism $\Z_p[\Gal(K^{(i)}/k)] \to \II_{\pe}(\LL/K^{(i)})$ which sends $\sigma \in \Gal(K^{(i)}/k)$ to $\tilde{\sigma} \rho \tilde{\sigma}^{-1} \in I_{\sigma(\Pe_0)}(\LL/K^{(i)})$, where $\tilde{\sigma} \in \Gal(\LL/k)$ is a lift of $\sigma$.
It is clearly $\Z_p[\Gal(K^{(i)}/k)]$-homomorphism and the compactness of $\II_{\pe}(\LL/K^{(i)})$ implies that it extends to a surjective $\Lambda(K^{(i)}/k)$-homomorphism
\[
\Lambda(K^{(i)}/k) \twoheadrightarrow \II_{\pe}(\LL/K^{(i)}).
\]

If $\sigma \in D_{\pe}(K^{(i)}/k)$, then $\sigma(\Pe_0) = \Pe_0$, the injectivity of $I_{\Pe_0}(\LL/K^{(i)}) \to I_{\Pe_0}(K^{(i+1)}/K^{(i)})$, and the commutativity of $\Gal(K^{(i+1)}/k)$ imply that
$\tilde{\sigma} \rho \tilde{\sigma}^{-1} = \rho$.
In other words, $\sigma - 1$ is contained in the kernel of the above surjective homomorphism.

Suppose that $i \geq 2$.
Then the assumption that $\rank_{\Z_p} D_{\pe}(K^{(i+1)}/k) \geq 2$ implies that $\rank_{\Z_p} D_{\pe}(K^{(i)}/k) \geq 2$ for generic $\Z_p^i$-extensions $K^{(i)} \subset K^{(i+1)}$ of $k$ by Proposition \ref{prop_almost_decomp_2}.
For such $K^{(i)}$, the above argument shows that $\II_{\pe}(\LL/K^{(i)}) \sim 0$, as claimed.

Finally suppose that $i = 1$ and choose any $\Z_p$-extension $K \subset K^{(2)}$ of $k$.
Then the assumption that $\rank_{\Z_p} D_{\pe}(K^{(2)}/k) = 2$ implies that we can choose a non-identity element $\gamma \in D_{\pe}(K/k)$.
The above argument shows that there is a surjective homomorphism $\Lambda(K/k)/(\gamma-1)  \twoheadrightarrow \II_{\pe}(\LL/K)$,
which proves that $\II_{\pe}(\LL/K)^{\dagger} \sim 0$, as claimed.
This completes the proof of Theorem \ref{thm_descent_step}.
\end{proof}

\section{Proof of Theorem \ref{thm_characteristic}}\label{sec_characteristic}

\begin{lem}\label{lem_cft}
Let $S \subset S_p(k)$ and
$k'/k$ a finite cyclic extension in which no primes in $S$ split.
We denote by $S'$ the set of primes of $k'$ above a prime in $S$.
Let $\MM$ be the maximal extension of $k'$ contained in $M_{S'}(k')$ such that 
the natural action of $\Gal(k'/k)$ on the inertia $I_{\pe'}(\MM/k')$ is trivial for every $\pe' \in S'$.
(Note that $I_{\pe'}(M_{S'}(k')/k')$ is stable under the action of $\Gal(k'/k)$ since $\pe'$ does not split in $k'/k$.)
Then $\MM/M_S(k)$ is a finite extension.
\end{lem}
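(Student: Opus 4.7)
The plan is to insert $\MM$ into a chain $M_S(k)\subseteq M_S(k)\cdot k'\subseteq N\subseteq\MM$ sitting inside $M_{S'}(k')$, where $N$ denotes the maximal subextension of $M_{S'}(k')$ that is abelian over $k$, and to show that each of the three successive indices is finite. Setting $\Delta=\Gal(k'/k)$ and $C=\Gal(M_{S'}(k')/k')$, I would let $I\subseteq C$ be the closed $\Z_p$-submodule generated by the inertia groups $I_{\pe'}(M_{S'}(k')/k')$ for $\pe'\in S'$; note that $S'$ is $\Delta$-stable, so $M_{S'}(k')/k$ is Galois and $\Delta$ acts on $C$ preserving $I$.

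Unwinding the defining property of $\MM$ gives $\Gal(\MM/k')=C/J$ with $J=\overline{\langle(\sigma-1)\tau:\sigma\in\Delta,\ \tau\in I\rangle}$, while a direct commutator computation for $\Gal(M_{S'}(k')/k)^{\ab}$ yields $\Gal(N/k')=C/J_1$ with $J_1=\overline{\langle(\sigma-1)\tau:\sigma\in\Delta,\ \tau\in C\rangle}$. Since $J\subseteq J_1$, we get $N\subseteq\MM$; further, $\Gal(M_S(k)\cdot k'/k)$ embeds into the abelian group $\Gal(M_S(k)/k)\times\Delta$, so $M_S(k)\cdot k'\subseteq N$, and obviously $[M_S(k)\cdot k':M_S(k)]\le[k':k]<\infty$.

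For the middle index $[N:M_S(k)\cdot k']$, I would let $R$ be the finite set of primes of $k$ that ramify in $k'/k$ but do not lie above $p$. Then $N/k$ is abelian pro-$p$ and ramified only inside $S\cup R$, so $N\subseteq M_{S\cup R}(k)\cdot k'$. An extension of Theorem~\ref{thm_cft} to the tame primes of $R$ (where the pro-$p$ local inertia at each $\pe\in R$ is the finite group of $p$-power roots of unity in $k_\pe$) shows $M_{S\cup R}(k)/M_S(k)$ is finite, and hence so is this index.

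The crux, and the step I expect to be the main obstacle, is $[\MM:N]=|J_1/J|<\infty$. Given $\tau\in C$, writing $\tau=\iota+\tilde x$ with $\iota\in I$ and $\tilde x$ a lift of its image in $C/I=X(k')$, one has $(\sigma-1)\tau\equiv(\sigma-1)\tilde x\pmod J$, giving a well-defined surjection
\[
I_\Delta\otimes_{\Z_p}X(k')\twoheadrightarrow J_1/J,
\]
where $I_\Delta$ is the augmentation ideal of $\Z_p[\Delta]$. Since $X(k')$ is the $p$-Sylow of the ideal class group of the number field $k'$ and is therefore finite, and $\Delta$ is finite, the source is finite; consequently the algebraic image of $J_1$ in $C/J$ is already finite, hence closed in the profinite topology, which yields $J_1/J$ finite and completes the argument.
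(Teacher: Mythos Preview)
Your overall strategy is sound and genuinely different from the paper's: instead of comparing $\Gal(\MM/L(k'))$ and $\Gal(M_S(k)/L(k))$ directly via the norm map on principal local units (as the paper does, following \cite{fujii}), you interpolate the field $N$ and reduce to a group-theoretic bound on $J_1/J$. Your treatment of the ``crux'' step is correct: the map $I_\Delta\otimes_{\Z_p}X(k')\to J_1/J$ is well defined because $I_\Delta\cdot I\subset J$, and since $X(k')$ is finite the image is finite, hence closed, which forces $J_1/J$ finite. That part is clean.

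The gap is in the step you treat as routine, the finiteness of $[N:M_S(k)\cdot k']$. Two of your assertions are not true at the stated level of generality. First, ``$N/k$ is pro-$p$'' fails whenever $[k':k]$ is not a $p$-power: only $N/k'$ is guaranteed to be pro-$p$. Second, ``$N/k$ is ramified only inside $S\cup R$'' (with $R$ the \emph{tame} ramification of $k'/k$) is false if some $\mathfrak q\in S_p(k)\setminus S$ ramifies in $k'/k$; such a $\mathfrak q$ is above $p$, is excluded from your $R$, yet visibly ramifies in $k'/k\subset N/k$. In that case $N\not\subset M_{S\cup R}(k)\cdot k'$, and enlarging $R$ to include $\mathfrak q$ does not help directly, since $M_{S\cup\{\mathfrak q\}}(k)/M_S(k)$ can be infinite.

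Both issues are repairable. For the second, observe that for $\mathfrak q\in S_p(k)\setminus S$ ramifying in $k'/k$, the extension $N/k'$ is unramified above $\mathfrak q$ (primes over $\mathfrak q$ are not in $S'$), so $I_{\mathfrak q}(N/k)\cong I_{\mathfrak q}(k'/k)$ is \emph{finite}; hence the subgroup of $\Gal(N/k)$ generated by all inertia outside $S$ is finite, and the corresponding fixed field already lies in $M_S(k)$. For the first, split off the prime-to-$p$ part of $\Gal(N/k)$, which has order dividing $[k':k]$. With these corrections your argument goes through; in the paper's only application (Proposition~\ref{prop_characteristic}) one has $S=S_p(k)$ and $[k':k]=p^N$, so neither issue arises there, which may be why you overlooked them.
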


\begin{proof}

We mimic the calculation of \cite[Proposition 1]{fujii}.

The extension $\MM/M_S(k)$ is finite if and only if the kernel of the restriction map 
$\Gal(\MM/L(k')) \to \Gal(M_S(k)/L(k))$ is finite.
Let $\sigma$ be a generator of $\Gal(k'/k)$.
By Theorem \ref{thm_cft}, we have
\[
\Gal(M_S(k)/L(k)) \simeq \prod_{\pe \in S} U_{\pe}^{(1)} \bigg/ B_{k},
\]
where $B_{k}$ denotes the diagonal image of $E_{k} \otimes \Z_p$ in $\prod_{\pe \in S} U_{\pe}^{(1)}$.
On the other hand,
\[
\Gal(\MM/L(k')) \simeq \prod_{\pe' \in S'} U_{\pe'}^{(1)} \bigg/ B_{k'} \prod_{\pe' \in S'} (\sigma-1)U_{\pe'}^{(1)},
\]
where $B_{k'}$ denotes the image of $E_{k'} \otimes \Z_p$ in $\prod_{\pe' \in S'} U_{\pe'}^{(1)}$.
Consider the commutative diagram with exact rows
\[
\begin{CD}
0 @>>> \frac{B_{k'} \prod_{\pe' \in S'} (\sigma-1) U_{\pe'}^{(1)}}{\prod_{\pe' \in S'} (\sigma-1) U_{\pe'}^{(1)}}
@>>> \frac{\prod_{\pe' \in S'} U_{\pe'}^{(1)}}{\prod_{\pe' \in S'} (\sigma-1) U_{\pe'}^{(1)}}
@>>> \frac{\prod_{\pe' \in S'} U_{\pe'}^{(1)}}{B_{k'} \prod_{\pe' \in S'} (\sigma-1) U_{\pe'}^{(1)}}
@>>> 0 \\
@. @VVV @VVV @VVV @.\\
0 @>>> B_k @>>> \prod_{\pe \in S} U_{\pe}^{(1)} @>>> \frac{\prod_{\pe \in S} U_{\pe}^{(1)}}{B_k} @>>> 0,
\end{CD}
\]
where the vertical maps are induced by the norm map.
In order to show that the right vertical map has finite kernel,
we show that the kernel of the middle map and the cokernel of the left vertical map are finite.

The middle vertical map can be divided into each component
\[
U_{\pe'}^{(1)}\bigg/ (\sigma-1) U_{\pe'}^{(1)} \to U_{\pe}^{(1)},
\]
for $\pe \in S$ and $\pe' \in S'$ with $\pe' | \pe$.
This map has finite cokernel since the image contains $(U_{\pe}^{(1)})^{[k':k]}$.
On the other hand, as the left and the right hand side is the cokernel and the kernel of 
\[
\begin{CD}
U_{\pe'}^{(1)} @>{\sigma-1}>> U_{\pe'}^{(1)},
\end{CD}
\]
respectively, the $\Z_p$-ranks of them coincide.
Therefore the kernel is also finite, as claimed.
The finiteness of the left vertical map also follows from the fact that the image of $E_{k'}$ under the norm map $E_{k'} \to E_k$ contains
$(E_k)^{[k':k]}$.
This completes the proof of the lemma.
\end{proof}

\begin{prop}\label{prop_characteristic}
Let $K \in \EE_{\ns}(k)$ and choose a topological generator $\sigma$ of $\Gal(K/k)$.
Then $\ch_{\Lambda(K/k)} X(K)$ is prime to $(\sigma^{p^N}-1)/(\sigma-1)$ for all positive integer $N$.
\end{prop}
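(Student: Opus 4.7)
The plan is to reduce to showing, for every $j \geq 1$, that the distinguished polynomial $\omega_j(T) := \Phi_{p^j}(1+T) \in \Lambda(K/k) = \Z_p[[T]]$ (with $T = \sigma - 1$) is coprime to $\ch_{\Lambda(K/k)} X(K)$. Indeed, $(\sigma^{p^N}-1)/(\sigma-1) = \prod_{j=1}^N \omega_j$ in $\Lambda(K/k)$, and the $\omega_j$ are distinct irreducible distinguished polynomials, so coprimality for each such $j$ will imply the proposition. Moreover, by the structure theory of finitely generated torsion $\Lambda(K/k)$-modules, $\omega_j$ is coprime to $\ch X(K)$ if and only if $X(K)/\omega_j X(K)$ is finite. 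So I fix $j \geq 1$ and aim to prove this finiteness.

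Let $\tilde{L} = L(K)^{\omega_j X(K)} \subset L(K)$, so that $\Gal(\tilde{L}/K) = X(K)/\omega_j X(K)$. Since $\omega_j$ divides $\sigma^{p^j} - 1$ in $\Lambda(K/k)$, the subgroup $\Gal(K/k_j) = \langle \sigma^{p^j} \rangle$ acts trivially on $\Gal(\tilde{L}/K)$, making $\tilde{L}/k_j$ abelian; combined with $L(K)/K$ being unramified and $K/k$ being unramified outside $p$, this gives $\tilde{L} \subset M_{S_p(k_j)}(k_j)$. For any prime $\pe' \in S_p(k_j)$, the inertia $I_{\pe'}(\tilde{L}/k_j)$ injects into $I_{\pe'}(K/k_j) \subset \Gal(K/k_j)$ (since $\tilde{L}/K$ is unramified), and this injection is $\Gal(k_j/k)$-equivariant; as $\Gal(K/k)$ is abelian, the action on the target is trivial, hence so is the action on $I_{\pe'}(\tilde{L}/k_j)$. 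Therefore $\tilde{L}$ lies in the subfield $\MM$ of Lemma \ref{lem_cft} applied with $S = S_p(k)$ and $k' = k_j$ (the no-split hypothesis for $k_j/k$ follows from $K \in \EE_{\ns}(k)$). By the lemma, $[\MM : M_{S_p(k)}(k)] < \infty$, so $[\tilde{L} : \tilde{L} \cap M_{S_p(k)}(k)] < \infty$.

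To finish, I bound $[\tilde{L} \cap M_{S_p(k)}(k) : K]$. Since $M_{S_p(k)}(k) \cap L(K) = L(K)^{(\sigma-1)X(K)}$ (being the maximal subextension of $L(K)/K$ that is abelian over $k$), one has $\Gal(\tilde{L} \cap M_{S_p(k)}(k)/K) \cong X(K)/(\omega_j X(K) + (\sigma-1)X(K))$. The key computation is $\omega_j(0) = \Phi_{p^j}(1) = p$, so $\omega_j \equiv p \pmod{(\sigma-1)}$, and this quotient coincides with $X(K) \otimes_{\Lambda(K/k)} \F_p$, a finite-dimensional $\F_p$-vector space since $X(K)$ is finitely generated over $\Lambda(K/k)$. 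Combining with the previous paragraph, $[\tilde{L} : K] < \infty$, proving the required finiteness of $X(K)/\omega_j X(K)$.

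The delicate step is the inertia-equivariance argument verifying $\tilde{L} \subset \MM$: one must set up the $\Gal(k_j/k)$-action on $I_{\pe'}(\tilde{L}/k_j)$ correctly using the non-splitting of $\pe'$ in $k_j/k$ and the abelianness of $\Gal(\tilde{L}/k_j)$, even though $\Gal(\tilde{L}/k)$ itself need not be abelian.
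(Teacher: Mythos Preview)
Your proof is correct and rests on the same key input as the paper's, namely Lemma~\ref{lem_cft} applied to $k' = k_j$ together with the observation that the inertia groups inject into $\Gal(K/k_j)$ and hence carry trivial $\Gal(k_j/k)$-action.

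The execution differs slightly. The paper does not factor $(\sigma^{p^N}-1)/(\sigma-1)$ into the $\omega_j$; instead it works with $\sigma^{p^N}-1$ directly and shows that $\LL_N/\LL_0$ is finite, where $\LL_N$ (resp.\ $\LL_0$) is the maximal subextension of $L(K)$ abelian over $k_N$ (resp.\ $k$). The point is that $\LL_N = \MM \cap L(K)$ and $\LL_0 = M_{S_p(k)}(k) \cap L(K)$, so $\LL_0 = \LL_N \cap M_{S_p(k)}(k)$ and the finiteness of $\LL_N/\LL_0$ follows immediately from $[\MM:M_{S_p(k)}(k)] < \infty$. By contrast, because your field $\tilde{L}$ is strictly smaller than $\LL_j$, you cannot identify $\tilde{L} \cap M_{S_p(k)}(k)$ so directly and must invoke the extra (easy) computation $(\omega_j,\sigma-1) = (p,\sigma-1)$. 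Both routes are short; the paper's avoids the cyclotomic factorization and the Nakayama-type step at the end.
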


\begin{proof}
We shall show that the natural surjective map 
\[
X(K) / (\sigma^{p^N}-1)X(K) \to X(K) / (\sigma-1)X(K)
\]
is pseudo-isomorphic.
We have $X(K) / (\sigma-1)X(K) = \Gal(\LL_0/K)$, 
where $\LL_0$ is the maximal abelian extension of $k$ contained in $L(K)$.
Similarly, let $k_N$ be the $N$-th layer of the $\Z_p$-extension $K/k$, then
$X(K) / (\sigma^{p^N}-1)X(K) = \Gal(\LL_N/K)$, 
where $\LL_N$ is the maximal abelian extension of $k_N$ contained in $L(K)$.
It is clear that $\LL_0 \subset M_{S_p(k)}(k)$ and $\LL_N \subset M_{S_p(k_N)}(k_N)$.

For every prime $\pe_N \in S_p(k_N)$, since $\pe_N$ does not split in $k_N/k$ by $K \in \EE_{\ns}(k)$ and 
the inertia group $I_{\pe_N}(\LL_0/k_N)$ injects into $\Gal(K/k_N)$, the Galois group $\Gal(k_N/k)$ acts on $I_{\pe_N}(\LL_0/k_N)$ trivially.
Define $\MM$ similarly as in Lemma \ref{lem_cft}, namely,
let $\MM$ be the maximal extension of $k_N$ contained in $M_{S_p(k_N)}(k_N)$ such that 
the natural action of $\Gal(k_N/k)$ on the inertia subgroups $I_{\pe_N}(\MM/k_N)$ is trivial for every prime $\pe_N$ of $k_N$ above $p$.
Then the above argument shows that $\LL_N \subset \MM$.
Lemma \ref{lem_cft} shows that $\MM / M_{S_p(k)}(k)$ is a finite extension.

By definition, $\LL_0 = M_{S_p(k)}(k) \cap L(K)$ and $\LL_N = \MM \cap L(K)$, hence
$\LL_0 = \LL_N \cap M_{S_p(k)}(k)$.
Therefore the finiteness of $\MM / M_{S_p(k)}(k)$ implies the finiteness of $\LL_N/\LL_0$.
This completes the proof of the proposition.
\end{proof}

\begin{proof}[Proof of Theorem \ref{thm_characteristic}]
By Proposition \ref{prop_characteristic} and the assumption that $X(K)^{\dagger} \sim 0$, $\ch_{\Lambda(K/k)}(X(K))$ is a power of $(\sigma-1)$, where $\sigma$ is a topological generator of $\Gal(K/k)$.
Then by Lemma \ref{lem_jaulent} and the assumption that $K/k$ is arithmetically semi-simple, 
$\ch_{\Lambda(K/k)}X(K) = \ch_{\Lambda(K/k)}(X(K)/(\sigma-1)X(K)) = (\sigma-1)^{s(K/k)}$.
This completes the proof of Theorem \ref{thm_characteristic}.
\end{proof}


\section{Proof of Theorem \ref{thm_openness}}\label{sec_openness}
The following proposition is a generalization of \cite[Theorem 1]{fukuda}.
It is of independent interest.

\begin{prop}\label{prop_fukuda}
Let $K/k$ be a $\Z_p$-extension and $k_n$ be the $n$-th layer of it.
Take a non-negative integer $n_0$ such that $K/k_{n_0}$ is totally ramified at every ramified prime.
Then $X(K)$ is a finitely generated $\Z_p$-module of rank $s(K/k)$ if and only if
there is an integer $n \geq n_0$ such that $\sharp X(k_{n+1}) = p^{s(K/k)} \sharp X(k_n)$.
\end{prop}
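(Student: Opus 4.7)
The ``only if'' direction is immediate: if $X(K)$ is finitely generated over $\Z_p$ of rank $s := s(K/k)$, then $\mu(K/k) = 0$ and $\lambda(K/k) = \rank_{\Z_p} X(K) = s$, and Iwasawa's asymptotic formula $\sharp X(k_n) = p^{sn+\nu}$ (valid for all sufficiently large $n$) yields $\sharp X(k_{n+1}) = p^s \sharp X(k_n)$ for every sufficiently large $n$, in particular for some $n \geq n_0$.

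For the ``if'' direction, set $Y = X(K)$, fix a topological generator $\gamma$ of $\Gal(K/k)$, and write $\omega_n = \gamma^{p^n} - 1$ and $\nu_{n,n_0} = \omega_n/\omega_{n_0}$ in $\Lambda(K/k)$. Let $\pe_1, \dots, \pe_r$ be the primes of $k$ ramified in $K/k$; the totally-ramified hypothesis ensures a unique prime $\Pe_i$ of $K$ above each $\pe_i$. Choose lifts $\tilde{\gamma}_i \in \Gal(L(K)/k_{n_0})$ of a topological generator of $\Gal(K/k_{n_0})$ lying in the inertia subgroup at $\Pe_i$, and set $y_i = \tilde{\gamma}_i \tilde{\gamma}_1^{-1} \in Y$ for $i \geq 2$; let $J \subset Y$ be the $\Z_p$-submodule they generate. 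A routine descent inside the extension $1 \to Y \to \Gal(L(K)/k_{n_0}) \to \Gal(K/k_{n_0}) \to 1$ (computing the commutator subgroup as $\omega_n Y$ and collapsing the inertia lifts) then yields
\[
X(k_n) \simeq Y/(\omega_n Y + \nu_{n,n_0} J) \qquad (n \geq n_0),
\]
and a more careful analysis of this presentation upgrades the usual asymptotic Iwasawa formula to the \emph{exact} equality $\sharp X(k_n) = p^{\lambda(K/k)\cdot n + \mu(K/k)\cdot p^n + \nu(K/k)}$ valid for every $n \geq n_0$. Consequently, $\sharp X(k_{n+1})/\sharp X(k_n) = p^{\lambda + \mu(p-1)p^n}$ for every $n \geq n_0$.

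A separate key input is the inequality $\lambda(K/k) \geq s(K/k)$. Fix an elementary decomposition $Y \sim \bigoplus_i \Lambda/(p^{\mu_i}) \oplus \bigoplus_j \Lambda/(g_j)$ with distinguished polynomials $g_j$. Then $\lambda = \sum_j \deg g_j$, whereas $s(K/k) = \rank_{\Z_p}(Y/TY)$ equals $\#\{j : T \mid g_j\}$ (finite corrections are irrelevant for the rank), since $(\Lambda/(p^{\mu_i}))/T$ is finite and $(\Lambda/(g_j))/T = \Z_p/(g_j(0))$ has positive $\Z_p$-rank precisely when $T \mid g_j$. As $\deg g_j \geq 1$, we conclude $\lambda \geq \#\{j : T \mid g_j\} = s$.

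Combining these ingredients, the hypothesis $\sharp X(k_{n_1+1}) = p^s \sharp X(k_{n_1})$ at some $n_1 \geq n_0$ reads $\lambda + \mu(p-1)p^{n_1} = s$ via the exact formula. Together with $\lambda \geq s$ and $\mu \geq 0$, this forces $\lambda = s$ and $\mu = 0$, whence $X(K)$ is finitely generated over $\Z_p$ of rank $\lambda = s(K/k)$, as desired. The main technical obstacle is the promotion of the asymptotic Iwasawa formula to an exact equality for all $n \geq n_0$; this requires carefully tracking how the inertia correction $\nu_{n,n_0} J$ interacts with the elementary decomposition of $Y$ in the descent presentation above.
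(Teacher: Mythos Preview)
Your ``only if'' direction and the inequality $\lambda(K/k) \geq s(K/k)$ are both fine. The gap is the step you yourself flag as the ``main technical obstacle'': the assertion that Iwasawa's class number formula $\sharp X(k_n) = p^{\lambda n + \mu p^n + \nu}$ holds \emph{exactly} for every $n \geq n_0$. This is false in general, and no amount of ``carefully tracking how the inertia correction $\nu_{n,n_0}J$ interacts with the elementary decomposition'' will establish it. The descent presentation $X(k_n) \simeq X(K)/\nu_{n,n_0}Y_{n_0}$ is indeed exact for $n \geq n_0$, but the \emph{order} $[Y_{n_0}:\nu_{n,n_0}Y_{n_0}]$ is controlled by products of $p$-adic valuations $v_p\bigl((1+\alpha)^{p^n}-1\bigr)$ as $\alpha$ runs over the roots of the characteristic polynomial, and these valuations grow like $p^n v_p(\alpha)$ rather than linearly in $n$ whenever $v_p(\alpha) < 1/(p-1)$. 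Thus for small $n$ the ratio $\sharp X(k_{n+1})/\sharp X(k_n)$ can be strictly less than $p^{\lambda}$, which destroys the numerical argument $\lambda + \mu(p-1)p^{n_1} = s$. (Concretely, take $Y_{n_0} = \Lambda/(g)$ with $g$ an irreducible distinguished polynomial whose roots have valuation strictly between $0$ and $1/(p-1)$; then $[Y_{n_0}:\nu_{n_0+1,n_0}Y_{n_0}] < p^{\lambda}$.) Your proposal therefore does not prove the ``if'' direction.

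The paper's argument avoids this entirely by a direct Nakayama step that requires no asymptotic formula at all. Set $Y = \Ker(X(K) \twoheadrightarrow X(k_n))$; the hypothesis reads $[Y:\nu_{n+1,n}Y] = p^{s}$. Since $\rank_{\Z_p}(Y/(\sigma-1)Y) = \rank_{\Z_p}(X(K)/(\sigma-1)X(K)) = s$, one has a $\Lambda(K/k)$-surjection $Y \twoheadrightarrow \Z_p^{s}$ with kernel $Z$; the snake lemma for multiplication by $\nu_{n+1,n}$ (which acts as $p$ on $\Z_p^s$) gives $\sharp(Z/\nu_{n+1,n}Z) \cdot p^s = [Y:\nu_{n+1,n}Y] = p^s$, hence $Z/\nu_{n+1,n}Z = 0$ and $Z = 0$ by Nakayama. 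Thus $Y \simeq \Z_p^{s}$ and $X(K)$ is finitely generated over $\Z_p$ of rank $s$. This Fukuda-style argument works at the single level $n$ without any exact growth formula, which is precisely what your approach was missing.
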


\begin{proof}
The ``only if'' part follows immediately from Iwasawa's class number formula.
In order to show the ``if'' part, let $n \geq n_0$ be an integer in the statement.
Put $Y = \Ker(X(K) \twoheadrightarrow X(k_n))$, which is a sub $\Lambda(K/k)$-module of $X(K)$ of finite index $\sharp X(k_n)$.
Choose a topological generator $\sigma$ of $\Gal(K/k)$ and put $\nu_{n+1, n} = (\sigma^{p^{n+1}}-1) / (\sigma^{p^n}-1)$.
Then in the proof of Iwasawa's class number formula, it is shown that $X(k_{n+1}) = X/\nu_{n+1, n}Y$.
Therefore by the choice of $n$, we have
\[
[Y : \nu_{n+1, n}Y] = p^{s(K/k)}.
\]

On the other hand, since
\[
\rank_{\Z_p} Y / (\sigma - 1)Y = \rank_{\Z_p} X(K) / (\sigma - 1)X(K) = s(K/k),
\]
we have a surjective $\Lambda(K/k)$-homomorphism $Y \twoheadrightarrow \Z_p^{s(K/k)}$,
where $\Gal(K/k)$ acts on $\Z_p$ trivially.
Let $Z$ be the kernel of the map.
We have the commutative diagram with exact rows
\[
\begin{CD}
0 @>>> Z @>>> Y @>>> \Z_p^{s(K/k)} @>>> 0\\
@. @V {\nu_{n+1, n}} VV @V {\nu_{n+1, n}} VV @V {\nu_{n+1, n}} VV @.\\
0 @>>> Z @>>> Y @>>> \Z_p^{s(K/k)} @>>> 0.
\end{CD}
\]
Since $\nu_{n+1, n} = \sigma^{p^n (p-1)} + \dots + \sigma^{p-1} + 1$ acts on $\Z_p$ as the multiplication by $p$, the snake lemma yields
\[
0 \to Z/\nu_{n+1, n} Z \to Y / \nu_{n+1, n} Y \to (\Z/p\Z)^{s(K/k)} \to 0.
\]
Since the order of the middle term is $p^{s(K/k)}$, we have $Z/\nu_{n+1, n} Z = 0$.
Hence Nakayama's lemma implies that $Z = 0$ and therefore $Y \simeq \Z_p^{s(K/k)}$.
Consequently $X(K)$ is a finitely generated $\Z_p$-module of rank $s(K/k)$, as asserted.
\end{proof}

We remark that the proof of the ``if'' part implies the following:
If there is an integer $n \geq n_0$ such that $\sharp X(k_{n+1}) = p^s \sharp X(k_n)$ with $s \leq s(K/k)$,
then $s = s(K/k)$ and $X(K)$ is a finitely generated $\Z_p$-module of rank $s(K/k)$.

\begin{proof}[Proof of Theorem \ref{thm_openness}]
Choose any element $K_0$ in the concerned set and let $k_n$ be the $n$-th layer of 
the $\Z_p$-extension $K_0/k$.
Then $s(k) \leq s(K_0/k) \leq \rank_{\Z_p} X(K_0) = s(k)$ shows that $s(K_0/k) = s(k)$.
Since $K_0 \in \EE_{\ram}(k)$, there is a non-negative integer $n_0$ such that
any prime of $k_{n_0}$ above $p$ is totally ramified in $K_0/k_{n_0}$.
By Proposition \ref{prop_fukuda}, there is an integer $n \geq n_0$ such that $\sharp X(k_{n+1}) = p^{s(k)} \sharp X(k_n)$. 

Take any $K \in \EE(k)$ such that $[K \cap K_0:k] \geq p^{n+1}$.
Since the $n$-th layers and $(n+1)$-st layers of $K/k$ and $K_0/k$ coincide,
it is clear that $K \in \EE_{\ram}(k)$.
Moreover $\sharp X(k_{n+1}) = p^{s(k)} \sharp X(k_n)$ and the remark after Proposition \ref{prop_fukuda} imply that
$X(K)$ is a finitely generated $\Z_p$-module of rank $s(k) = s(K/k)$.
Consequently, $K$ is contained in the concerned set.
By Remark \ref{rem_top_greenberg}, this completes the proof of Theorem \ref{thm_openness}.
\end{proof}

\section{Proofs of Lemmas \ref{lem_rank} and \ref{lem_induction}}\label{sec_appendix}

\begin{lem}\label{lem_polynomial}
Let $d$ be a positive integer and $f(T) = f(T_1, \dots, T_d) \in \Z_p[T_1, \dots, T_d]$ a nonzero polynomial.
Then $f(\alpha) \neq 0$ for generic $\alpha \in \Z_p^d$ with respect to the natural (Haar) measure of $\Z_p^d$.
\end{lem}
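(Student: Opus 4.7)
The strategy is induction on $d$. Since the zero locus $Z_f = \{\alpha \in \Z_p^d \mid f(\alpha) = 0\}$ is closed by continuity of polynomial evaluation, the only thing to verify is that $Z_f$ has Haar measure zero.

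For the base case $d = 1$, a nonzero polynomial $f \in \Z_p[T]$ has at most $\deg f$ roots in $\Z_p$ (for instance, because $\Z_p$ is an integral domain), so $Z_f$ is finite, hence closed of measure zero.

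For the inductive step, write
\[
f(T_1, \dots, T_d) = \sum_{i = 0}^{n} g_i(T_1, \dots, T_{d-1}) T_d^i
\]
with $g_n \in \Z_p[T_1, \dots, T_{d-1}]$ a nonzero polynomial. The Haar measure on $\Z_p^d$ is the product of the Haar measures on $\Z_p^{d-1}$ and $\Z_p$, so Fubini's theorem applies. For each $(\alpha_1, \dots, \alpha_{d-1}) \in \Z_p^{d-1}$ outside the zero locus of $g_n$, the polynomial $f(\alpha_1, \dots, \alpha_{d-1}, T_d)$ is a nonzero element of $\Z_p[T_d]$, so its zero set in $\Z_p$ has measure zero by the $d = 1$ case. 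On the other hand, the zero locus of $g_n$ in $\Z_p^{d-1}$ has measure zero by the inductive hypothesis. Combining these through Fubini gives $\mu(Z_f) = 0$.

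There is no real obstacle; the only points that require a moment of care are verifying that $Z_f$ is automatically closed (so that the conclusion is indeed about \emph{generic} rather than merely \emph{almost all} points in the sense of Definition \ref{def_generic}), and that the product structure of the Haar measure on $\Z_p^d$ is the one induced by the identification $\Z_p^d \simeq \Z_p^{d-1} \times \Z_p$, which legitimizes the Fubini step.
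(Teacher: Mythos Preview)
Your proposal is correct and follows essentially the same argument as the paper: induction on $d$, finiteness of roots in the base case, and a Fubini decomposition via the leading coefficient $g_n$ in the inductive step, after noting that the zero locus is closed. No substantive differences.
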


\begin{proof}
Put $E = \{\alpha \in \Z_p^d \mid f(\alpha) = 0\}$.
Since $E$ is a closed subset of $\Z_p^d$, it is enough to show that the measure of $E$ is zero.
We prove it by induction on $d$.
If $d = 1$, then $E$ is a finite set and the measure is zero, as claimed.

Suppose $d \geq 2$.
Let $\mu', \nu$, and $\mu = \mu' \otimes \nu$ be the measures of $\Z_p^{d-1}, \Z_p$, and $\Z_p^d$, respectively.
If $f$ is a constant polynomial, then the statement is trivial.
Otherwise there is an indeterminate which appears in $f$, 
so without loss of generality, we suppose that $T_d$ appears in $f$.
We write $T' = (T_1, \dots, T_{d-1})$ for short.
Then we can write 
\[
f(T) = \sum_{k=0}^N g_k(T') T_d^k
\]
for some positive integer $N$ and polynomials $g_k(T')$ with $g_N \neq 0$.
By the induction hypothesis, $E' = \{\alpha' \in \Z_p^{d-1} \mid g_N(\alpha') = 0\}$ satisfies $\mu'(E') = 0$.
Moreover, if $\alpha' \in \Z_p^{d-1} \setminus E'$, then the set 
$ E_{\alpha'} = \{ \alpha_d \in \Z_p \mid f(\alpha', \alpha_d)=0 \}$ is finite and in particular $\nu(E_{\alpha'}) = 0$.

Putting all together,
\[
\mu(E) 
= \int_{\Z_p^{d-1}} \nu(E_{\alpha'}) d\mu'(\alpha')
= \int_{\Z_p^{d-1} \setminus E'} \nu(E_{\alpha'}) d\mu'(\alpha') + \int_{E'} \nu(E_{\alpha'}) d\mu'(\alpha')
 = 0.
\]
(See for example \cite[section 35]{halmos}.)
This completes the proof.
\end{proof}

For convenience, we introduce the following terminology:
Let $X$ be a topological space equipped with a Borel measure and $A$ a subset of $X$.
We say that $A \subset X$ is generic (resp. large, resp. weakly large) if generic (resp. almost all, resp. weakly almost all) $x \in X$ is an element of $A$.

\begin{proof}[Proof of Lemma \ref{lem_rank}]
(1) Although it is not difficult to prove the second assertion simultaneously with the first assertion, we deduce the second from the first here.
We can suppose that $\rank_{\Z_p} L_j = i$.
Choose a submodule $L_j'$ of $M$ containing $L_j$ such that $\rank_{\Z_p} L_j' = i'$.
Then by the first assertion, for generic $N \in \Gr(i', M)$, we have $\rank_{\Z_p} (\Image (L_j' \to M/N)) = i'$ and consequently $\rank_{\Z_p} (\Image (L_j \to M/N)) = i$, as claimed.

Now we shall prove the first assertion.
It is clear that $\rank_{\Z_p} (\Image(L_j \to M/N))=i$ if and only if $N + L_j$ has finite index in $M$.
Choose any basis of $M$ and identify $M$ with $\Z_p^d$ (the module of column vectors).
The map of Lemma \ref{lem_measure_calculation} is read as
\[
\begin{array}{ccccc}
M_{i,d-i}(\Z_p) &\hookrightarrow& \GL_d(\Z_p) &\twoheadrightarrow& \Gr(i, \Z_p^d).\\
\alpha & \mapsto &
\begin{pmatrix}
1_{d-i} & 0 \\ \alpha & 1_i 
\end{pmatrix}
& \mapsto& N_{\alpha} =
\left \{
\begin{pmatrix}1_{d-i} & 0 \\ \alpha & 1_i \end{pmatrix}
\begin{pmatrix} x \\ 0\end{pmatrix}
\middle| x \in \Z_p^{d-i} \right \}
\end{array}
\]
By Lemmas \ref{lem_measure_calculation} and \ref{lem_covering}, it is enough to show that $N_{\alpha} + L_j$ has finite index in $M$ for $1 \leq j \leq r$ for generic $\alpha \in M_{i,d-i}(\Z_p)$.

Choose $\Z_p$-linear independent elements $b_1^{(j)}, \dots, b_i^{(j)}$ of $L_j$.
Put
\[
f_j(T) = \det \begin{pmatrix}
\begin{matrix} 1_{d-i} \\ T \end{matrix}
& b_1^{(j)} & \cdots & b_i^{(j)}  
\end{pmatrix}
\]
where $T = \begin{pmatrix}
T_{1,1} & \cdots & T_{1, d-i} \\
\vdots & \ddots & \vdots \\
T_{i, 1} & \cdots & T_{i, d-i}
\end{pmatrix}
$
is a tuple of indeterminates.
Then for $\alpha \in M_{i, d-i}(\Z_p)$, $N_{\alpha} + L_j \subset M$ has finite index if and only if $f(\alpha)$
does not vanish.
Clearly $f_j(T)$ is a polynomial of $i(d-i)$ variables with coefficients in $\Z_p$.
Moreover, using the linear independence of $b_1^{(j)}, \dots, b_i^{(j)}$, one can check that there exists an element $\alpha \in M_{i, d-i}(\Z_p)$ such that $f_j(\alpha) \neq 0$.
Therefore $f_j(T)$ is not zero as a polynomial.

Now put $f(T) = f_1(T)\dots f_r(T)$, which is a nonzero polynomial.
Then $\rank_{\Z_p} (\Image(L_j \to M/N_{\alpha}))=i$ for all $1 \leq j \leq r$ if and only if $f(\alpha) \neq 0$.
By Lemma \ref{lem_polynomial}, $f(\alpha) \neq 0$ for generic $\alpha \in M_{i,d-i}(\Z_p)$.
This proves (1).

(2) Put 
\[
\FF = \{ N \in \Gr(1, M) \mid \text{$\rank_{\Z_p}(\Image(L_j \to M/N)) = 1$ for all $1 \leq j \leq r$ and $s(N) = s$} \}.
\]
Then our aim is to prove that $\FF \subset \Gr(1, M)$ is generic.

As in (1), choose any basis of $M$ and identify $M$ with $\Z_p^d$.
Let $e_1, \dots, e_d \in \Z_p^d$ be the standard basis.
If $\alpha = (\alpha_1, \dots, \alpha_{d-1}) \in M_{1,d-1}(\Z_p)$, then 
\[
N_{\alpha} = \langle e_1 + \alpha_1 e_d, \dots, e_{d-1} + \alpha_{d-1} e_d \rangle = 
\left\{ \sum_{k=1}^d x_k e_k \in M \middle| x_k \in \Z_p, \sum_{k=1}^d \alpha_i x_k = 0 \right\},
\]
setting $\alpha_d = -1$ for convenience.
By Lemma \ref{lem_measure_calculation}, $U = \{ N_{\alpha} \in \Gr(1, \Z_p^d) \mid \alpha \in M_{1,d-1}(\Z_p) \}$ is an open set of $\Gr(1, \Z_p^d)$.
By Lemma \ref{lem_covering}, we have an open covering $\Gr(1, M) = \bigcup U_W$ consisting of the similarly constructed open sets, and it is easy to see that the open sets intersect each other.
In fact, each such open sets contains
\[
\left\{ \sum_{k=1}^d x_k e_k \in \Z_p^d \middle| x_k \in \Z_p, \sum_{k=1}^d x_k = 0 \right\} \in \Gr(1, \Z_p^d),
\]
for example.

\begin{claim}\label{claim_rank}
If $\FF \cap U_W \neq \emptyset$, then $\FF \cap U_W \subset U_W$ is generic.
\end{claim}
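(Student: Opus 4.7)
The plan is to pull both conditions defining $\FF$ back through the parameterization of $U_W$ introduced in part (1), and to express each as the non-vanishing of a polynomial in the parameter, so that Lemma \ref{lem_polynomial} closes the argument. After a harmless relabeling of the basis, identify $U_W$ with $M_{1,d-1}(\Z_p) \cong \Z_p^{d-1}$, so that $N_\alpha = \ker \varphi_\alpha$ for a linear functional $\varphi_\alpha \colon M \to \Z_p$ whose coefficients are affine in $\alpha$. Then $L_j \cap N_\alpha = \ker(\varphi_\alpha|_{L_j})$. The rank condition appearing in the definition of $\FF$ is the non-vanishing of a polynomial $f(\alpha)$, exactly as in the proof of part (1).

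To handle the remaining condition $s(N_\alpha) = s$, set $r_j := \rank_{\Z_p} L_j$ and $R := \rank_{\Z_p} (\sum_j L_j)$, introduce the sum map $T \colon \bigoplus_j L_j \to M$ with $K_0 := \ker T$, and define the linear-in-$\alpha$ map
\[
\Psi_\alpha \colon \bigoplus_j L_j \to \Z_p^r, \qquad (v_j)_j \mapsto (\varphi_\alpha(v_j))_j.
\]
Then $\ker \Psi_\alpha = \bigoplus_j (L_j \cap N_\alpha)$ and $T(\ker \Psi_\alpha) = \sum_j (L_j \cap N_\alpha)$. Combining the exact sequences $0 \to K_0 \to \bigoplus_j L_j \to \sum_j L_j \to 0$ and $0 \to \ker(\Psi_\alpha|_{K_0}) \to K_0 \to \Psi_\alpha(K_0) \to 0$ yields, whenever the rank condition holds,
\[
s(N_\alpha) = (d-1) - R + r - \rank_{\Z_p} \Psi_\alpha(K_0).
\]
Hence $s(N_\alpha)$ attains its minimum on the rank-condition set precisely when $\rank_{\Z_p} \Psi_\alpha(K_0)$ attains its maximum value $m^{\ast}$.

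Because the matrix of $\Psi_\alpha|_{K_0}$ (in any fixed $\Z_p$-bases) has entries linear in $\alpha$, the locus $\{\alpha \mid \rank_{\Z_p} \Psi_\alpha(K_0) < m^{\ast}\}$ is the common zero set of all $m^{\ast} \times m^{\ast}$ minors, each a polynomial in $\alpha$, at least one of which is non-zero since $m^{\ast}$ is attained. By Lemma \ref{lem_polynomial} this locus is a closed set of measure zero, so the condition $\rank_{\Z_p} \Psi_\alpha(K_0) = m^{\ast}$ is generic. Intersecting with the (also generic) rank condition produces a generic subset $\mathcal{G} \subset U_W$ on which $s(N_\alpha)$ takes the constant value $(d-1) - R + r - m^{\ast}$. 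The hypothesis $\FF \cap U_W \neq \emptyset$ supplies a single point at which this constant equals $s$; minimality of $s$ then forces the constant value on all of $\mathcal{G}$ to equal $s$, so $\mathcal{G} \subset \FF \cap U_W$ and the claim follows.

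The main technical obstacle will be the rank formula linking $s(N_\alpha)$ to $\rank_{\Z_p} \Psi_\alpha(K_0)$: one must simultaneously track how $K_0$ and $\ker \Psi_\alpha$ sit inside $\bigoplus_j L_j$ and how their intersection governs the rank of $\sum_j (L_j \cap N_\alpha)$. Once the formula is in place, the rest reduces to the now-standard observation that the rank of a linearly-parameterized matrix is generically maximal.
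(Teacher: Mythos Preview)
Your argument is correct. The rank formula you flag as the main obstacle does go through: besides the two exact sequences you list, you need
\[
0 \to K_0 \cap \ker\Psi_\alpha \to \ker\Psi_\alpha \xrightarrow{T} \textstyle\sum_j (L_j\cap N_\alpha) \to 0
\]
together with the identifications $K_0\cap\ker\Psi_\alpha = \ker(\Psi_\alpha|_{K_0})$ and $\rank_{\Z_p}\ker\Psi_\alpha = \sum_j r_j - r$ (the latter using the rank condition). One computes $\rank_{\Z_p}\sum_j(L_j\cap N_\alpha) = R - r + \rank_{\Z_p}\Psi_\alpha(K_0)$, whence your formula. Two small points of precision: the matrix entries are \emph{affine} rather than linear in $\alpha$, and your use of ``minimality of $s$'' is really that $s(N_\alpha)\ge s$ for all $\alpha$ in the rank-condition set, which forces $m^\ast = (d-1)-R+r-s$ and hence $\alpha_0\in\mathcal G$.

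The paper's proof follows the same template---reduce to showing that the rank of a matrix with polynomial entries in $\alpha$ is generically maximal, then invoke Lemma~\ref{lem_polynomial}---but builds a different matrix. Where you work with $\Psi_\alpha|_{K_0}$ (an $r\times\rank K_0$ matrix), the paper writes down explicit generators of a finite-index submodule of $\sum_j(L_j\cap N_\alpha)$, namely the elements
\[
\Bigl(\textstyle\sum_l b_l^{(j,\nu')}\alpha_l\Bigr)\sum_k b_k^{(j,\nu)}e_k - \Bigl(\textstyle\sum_l b_l^{(j,\nu)}\alpha_l\Bigr)\sum_k b_k^{(j,\nu')}e_k,
\]
and takes the $d\times(\text{many})$ matrix of their coordinates. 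Your route is more conceptual and yields a smaller matrix; the paper's is more hands-on and avoids introducing $K_0$. Both land on exactly the same endgame.
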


Let us deduce the assertion (2) from Claim \ref{claim_rank} in advance.
Since $\FF \neq \emptyset$ by the definition of $s$,
choose $W$ such that $\FF \cap U_W \neq \emptyset$.
Then by Claim \ref{claim_rank} applied to $U_W$, $\FF \cap U_W \subset U_W$ is generic.
Next for any other $W'$, $\FF \cap U_{W'} \neq \emptyset$ since $U_{W'} \cap U_W$ is a non-empty open subset of $U_W$.
Applying Claim \ref{claim_rank} again to $U_{W'}$, we have $\FF \cap U_{W'} \subset U_{W'}$ is generic.
Consequently $\FF \subset \Gr(1, \Z_p^d)$ is generic, which proves (2).

\begin{proof}[Proof of Claim \ref{claim_rank}]
It is enough to prove the claim for $U_W = U$.
By (1), we may work only for $\alpha \in M_{1,d-1}(\Z_p)$ such that $\rank_{\Z_p} (\Image(L_j \to M/N_{\alpha}))=1$ (i.e., $L_j \not\subset N_{\alpha}$) for all $1 \leq j \leq r$.

Take a $\Z_p$-basis $\left\{\sum_{k=1}^d b_k^{(j,\nu)} e_k \right\}_{\nu}$ of $L_j$, where $\nu$ runs through a set with $\rank_{\Z_p} L_j$ elements.
Since $L_j \not \subset N_{\alpha}$, we have 
$\sum_{k=1}^d  b_k^{(j,\nu_0)} \alpha_k \neq 0$ for some $\nu_0$.
Then for $\nu \neq \nu_0$,
\[
\left(\sum_{l=1}^d b_l^{(j,\nu_0)} \alpha_l\right) \sum_{k=1}^d b_k^{(j,\nu)} e_k
- \left(\sum_{l=1}^d b_l^{(j,\nu)} \alpha_l \right) \sum_{k=1}^d b_k^{(j,\nu_0)} e_k
 = \sum_{k=1}^d \left( \sum_{l=1}^d \left(b_k^{(j,\nu)} b_l^{(j,\nu_0)} - b_l^{(j,\nu_0)} b_k^{(k,\nu)}\right) \alpha_l \right) e_k
\]
is contained in $N_{\alpha} \cap L_j$.
Moreover they form a basis of a submodule of $N_{\alpha} \cap L_j$ of finite index
because the linear independence is clear and the injective map
\[
L_j / (N_{\alpha} \cap L_j) \hookrightarrow M/N_{\alpha} \simeq \Z_p
\]
shows that $\rank_{\Z_p} (N_{\alpha} \cap L_j) = \rank_{\Z_p} L_j - 1$.
This shows that $\sum_{j=1}^r (N_{\alpha} \cap L_j)$ has a submodule of finite index generated by
\[
\sum_{k=1}^d \left( \sum_{l=1}^d (b_k^{(j,\nu)} b_l^{(j,\nu')} - b_l^{(j,\nu')} b_k^{(j,\nu)}) \alpha_l \right) e_k
\]
where $j, \nu, \nu'$ run arbitrarily (the range of $\nu$ and $\nu'$ depends on $j$).
Therefore 
\[
s(N_{\alpha}) = d - 1 - \rank \left( \sum_{l=1}^d (b_k^{(j,\nu)} b_l^{(j,\nu')} - b_l^{(j,\nu')} b_k^{(j,\nu)}) \alpha_l \right),
\]
where on the right hand side the rank means the rank as 
a matrix whose rows and columns are indexed by $1 \leq k \leq d$ and $(j,\nu,\nu')$, respectively.

Since $\FF \cap U \neq \emptyset$, there exists $\alpha'$ such that $s(N_{\alpha'}) = s$.
Then there exists a minor square matrix of size $d-1 - s$ whose determinant does not vanish for the $\alpha'$.
Since the determinant is a polynomial of $\alpha$ with coefficients in $\Z_p$, 
using Lemma \ref{lem_polynomial},
we conclude that it does not vanish and consequently $s(N_{\alpha})=s$ for generic $\alpha$.
This completes the proof of Claim \ref{claim_rank}.
\end{proof}
As already remarked, this completes the proof of Lemma \ref{lem_rank}.
\end{proof}

\begin{proof}[Proof of Lemma \ref{lem_induction}]
We prove the corresponding statement for a free $\Z_p$-module $M$ of rank $d$ and 
properties of free quotients of $M$.
Namely, let $P$ (resp. $Q$) be a property of free quotients of $M$ of rank $d'$ (resp. $d''$) and suppose
\begin{enumerate}
\item[$(a)$] $P(M/N)$ for weakly almost all $N \in \Gr(d',M)$, and
\item[$(b)$] for any $N \in \Gr(d',M)$, $P(M/N)$ implies $Q((M/N)/L)$ for weakly almost all $L \in \Gr(d'', M/N)$.
\end{enumerate}
Then we prove that $Q(M/N)$ for weakly almost all $N \in \Gr(d'', M)$.

Choose a basis of $M$ and identify $M$ with $\Z_p^d$.
Consider the map
\[
\begin{array}{cccccc}
\varphi: &M_{d'',d-d''}(\Z_p) &\hookrightarrow& \GL_d(\Z_p) &\twoheadrightarrow& \Gr(d'',M).\\
 &A & \mapsto & 
\begin{pmatrix} 1_{d-d''} & 0 \\ A & 1_{d''} \end{pmatrix}
& \mapsto & 
\left \{
\begin{pmatrix}1_{d-d''} & 0 \\ A & 1_{d''} \end{pmatrix}
\begin{pmatrix} x \\ 0\end{pmatrix}
\middle| x \in \Z_p^{d-d''} \right \}
\end{array}
\]
By Lemma \ref{lem_covering}, $\Gr(d'', M)$ is covered by the open set $\Image(\varphi)$ and the similar open sets.
Therefore it is enough to show that $Q(M/N)$ for weakly almost all $N \in \Image(\varphi)$.
Then by Lemma \ref{lem_measure_calculation}, it is enough to show that $Q(M/\varphi(A))$ for weakly almost all $A \in M_{d'',d-d''}(\Z_p)$.

Applying Lemma \ref{lem_measure_calculation} to the map
\[
\begin{array}{cccccc}
\psi:&M_{d',d-d'}(\Z_p) &\hookrightarrow& \GL_d(\Z_p) &\twoheadrightarrow& \Gr(d',M),\\
& B & \mapsto & 
\begin{pmatrix} 1_{d-d'} & 0 \\ B & 1_{d'} \end{pmatrix}
& \mapsto & 
\left \{
\begin{pmatrix}1_{d-d'} & 0 \\ B & 1_{d'} \end{pmatrix}
\begin{pmatrix} y \\ 0\end{pmatrix}
\middle| y \in \Z_p^{d-d'} \right \}
\end{array}
\]
the assumption (a) implies that the subset
\[
\BB = \{ B \in M_{d',d-d'}(\Z_p) \mid P(M/\psi(B)) \} \subset M_{d', d-d'}(\Z_p)
\]
is weakly large.
Next, for any $B \in M_{d', d-d'}(\Z_p)$, 
we choose as a basis of $M/\psi(B)$ the projection image of the last $d'$ elements of the fixed basis of $M$.
Then applying Lemma \ref{lem_measure_calculation} to the map
\[
\begin{array}{cccccc}
\psi_B: & M_{d'',d'-d''}(\Z_p) &\hookrightarrow & \GL_{d'}(\Z_p) &\twoheadrightarrow& \Gr(d'',M/\psi(B)),\\
&C & \mapsto & 
 \begin{pmatrix} 1_{d'} & 0 \\ C & 1_{d'-d''} \end{pmatrix}
& \mapsto & 
\left \{
\begin{pmatrix}1_{d'} & 0 \\ C & 1_{d'-d''} \end{pmatrix}
\begin{pmatrix} z \\ 0\end{pmatrix}
\middle| z \in \Z_p^{d'-d''} \right \}
\end{array}
\]
the assumption (b) implies that the subset
\[
\CC_B = \{ C \in M_{d'',d'-d''}(\Z_p) \mid Q((M/\psi(B))/\psi_B(C)) \} \subset M_{d'', d'-d''}(\Z_p)
\]
is weakly large if $P(M/\psi(B))$.

In the following proof, for each $B \in M_{d', d-d'}(\Z_p)$, 
let $B_1 \in M_{d'-d'', d-d'}(\Z_p)$ and $B_2 \in M_{d'', d-d'}(\Z_p)$ be the matrices such that $B = \begin{pmatrix} B_1 \\ B_2 \end{pmatrix}$.
Then under the natural inclusion map $\Gr(d'',M/\psi(B)) \hookrightarrow \Gr(d'', M)$, $\psi_B(C)$ is mapped to
\begin{eqnarray*}
&&\left\{
\begin{pmatrix} 1_{d-d'} & 0 &0\\ B_1 & 1_{d'-d''} & 0 \\ B_2 & C & 1_{d''}\end{pmatrix} \begin{pmatrix} y \\ z \\ 0 \end{pmatrix}
\middle| y \in \Z_p^{d-d'}, z \in \Z_p^{d'-d''} \right\}\\
&=& \left\{
\begin{pmatrix} 1_{d-d'} & 0 &0\\ 0 & 1_{d'-d''} & 0 \\ B_2 -C B_1 & C & 1_{d''} \end{pmatrix} \begin{pmatrix} x \\ 0 \end{pmatrix} \middle| x \in \Z_p^{d-d''} \right\}
=\varphi(B_2-CB_1, C),
\end{eqnarray*}
where $(B_2-CB_1, C)$ denotes a matrix in $M_{d'', d-d''}(\Z_p)$.

As a consequence, if $B \in \BB$ and $C \in \CC_B$, then $Q(M/\varphi(B_2-CB_1, C))$.
Hence it is enough to show that the subset
\[
\{ (B_2-CB_1, C) \in M_{d'',d-d''}(\Z_p) \mid B \in \BB, C \in \CC_B \} \subset M_{d'',d-d''}(\Z_p)
\]
is weakly large, under the assumptions that $\BB \subset M_{d', d-d'}(\Z_p)$ is weakly large and 
$\CC_B \subset M_{d'', d'-d''}(\Z_p)$ is weakly large if $B \in \BB$.
We prepare three claims in order to prove this.

\begin{claim}\label{claim_induction_1}
The subset 
\[
\{ (B, C) \in M_{d',d-d'}(\Z_p) \times M_{d'',d'-d''}(\Z_p) \mid B \in \BB, C \in \CC_B \} \subset M_{d',d-d'}(\Z_p) \times M_{d'',d'-d''}(\Z_p)
\]
is weakly large.
\end{claim}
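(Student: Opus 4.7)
The plan is to prove Claim \ref{claim_induction_1} by a Fubini/Tonelli argument on the product measure $\mu_B \otimes \mu_C$ on $M_{d', d-d'}(\Z_p) \times M_{d'', d'-d''}(\Z_p)$. Let $E$ denote an arbitrary measurable subset of the complement of the set in question; I want to show $\mu(E) = 0$. For each $B$, let $E_B = \{ C \mid (B, C) \in E \}$ be the vertical slice. By Tonelli's theorem, $E_B$ is $\mu_C$-measurable for $\mu_B$-almost every $B$, the function $B \mapsto \mu_C(E_B)$ is $\mu_B$-measurable, and
\[
\mu(E) = \int_{M_{d',d-d'}(\Z_p)} \mu_C(E_B) \, d\mu_B(B).
\]

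Next I would analyze the slice $E_B$ using the hypothesis. If $B \in \BB$, then by the definition of $E$ we have $E_B \subset M_{d'', d'-d''}(\Z_p) \setminus \CC_B$; since $\CC_B$ is weakly large and $E_B$ is measurable, this forces $\mu_C(E_B) = 0$. Consequently the measurable set
\[
\{ B \mid \mu_C(E_B) > 0 \}
\]
is contained in the complement of $\BB$. Since $\BB$ is weakly large and this set is measurable, its $\mu_B$-measure is zero. Plugging back into the displayed integral gives $\mu(E) = 0$, which is the desired conclusion.

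The only step that requires some care is the invocation of Tonelli, because $E$ is not assumed to be a product set and the individual sets $\BB$ and $\CC_B$ are only assumed to be weakly large (not necessarily measurable); however, the argument above only ever applies measurability to $E$ itself and to its slices, and never requires $\BB$ or $\CC_B$ to be measurable. Since the underlying spaces are compact metrizable groups with finite Haar measure, Tonelli's theorem applies without further hypothesis. I do not anticipate any genuine obstacle beyond being careful to phrase the argument so that weak largeness (rather than the stronger ``almost all'') suffices on both layers; this is precisely why we took $\{B \mid \mu_C(E_B) > 0\}$, which is automatically measurable as the preimage of $(0, \infty)$ under a measurable function, rather than trying to compare $\BB^c$ itself against a measurable set.
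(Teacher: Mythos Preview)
Your proof is correct and follows essentially the same approach as the paper: both take an arbitrary measurable $E$ in the complement, apply Fubini/Tonelli to write $\mu(E)$ as an integral of slice measures $\mu_C(E_B)$, observe that $E_B \subset \CC_B^c$ forces $\mu_C(E_B)=0$ whenever $B\in\BB$, and then use weak largeness of $\BB$ on the measurable set where the slice measure is positive. The paper phrases the last step via $\BB' = \{B \mid \mu_2(E_B)=0\} \supset \BB$, which is exactly the complement of your set $\{B \mid \mu_C(E_B)>0\}$.
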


\begin{proof}
\if0 Obviously
\begin{eqnarray*}
\left(M_{d',d-d'}(\Z_p) \times M_{d'',d'-d''}(\Z_p)\right) \setminus \{ (B, C) \in M_{d',d-d'}(\Z_p) \times M_{d'',d'-d''}(\Z_p) \mid B \in \BB, C \in \CC_B \}\\ 
= (M_{d',d-d'}(\Z_p) \setminus \BB ) \times M_{d'',d'-d''}(\Z_p) \sqcup \bigsqcup_{B \in \BB} \left(\{B\} \times (M_{d'',d'-d''}(\Z_p) \setminus \CC_B) \right).
\end{eqnarray*}
\fi
Let $\mu_1, \mu_2$, and $\mu = \mu_1 \otimes \mu_2$ be the measures of $M_{d',d-d'}(\Z_p), M_{d'',d'-d''}(\Z_p)$, and $M_{d',d-d'}(\Z_p) \times M_{d'',d'-d''}(\Z_p)$, respectively.
Let $E$ be any measurable subset contained in 
\[
\left(M_{d',d-d'}(\Z_p) \times M_{d'',d'-d''}(\Z_p)\right) \setminus \{ (B, C) \in M_{d',d-d'}(\Z_p) \times M_{d'',d'-d''}(\Z_p) \mid B \in \BB, C \in \CC_B \}. 
\]
For any $B \in M_{d',d-d'}(\Z_p)$, put $E_B = \{ C \in M_{d'',d'-d''}(\Z_p) \mid (B,C) \in E\}$.
Then $E_B$ is measurable, the function $B \mapsto \mu_2(E_B)$ is measurable,
and 
\[
\mu(E) = \int_{M_{d',d-d'}(\Z_p)} \mu_2(E_B) d\mu_1(B)
\]
(see \cite[section 35]{halmos}).

Put $\BB' = \{ B \in M_{d', d-d'}(\Z_p) \mid \mu_2(E_B) = 0\}$.
The measurability of $B \mapsto \mu_2(E_B)$ implies that $\BB'$ is measurable.
Moreover $\BB' \supset \BB$.
In fact, if $B \in \BB$, then $\mu_2(E_B) = 0$ since $E_B \subset M_{d'',d'-d''}(\Z_p) \setminus \CC_B$ is measurable and $\CC_B \subset M_{d'', d'-d''}(\Z_p)$ is weakly large.
Therefore $\mu_1(M_{d',d-d'}(\Z_p) \setminus \BB') = 0$ since $\BB \subset M_{d',d-d'}(\Z_p)$ is weakly large.
Consequently
\[
\mu(E) = \int_{M_{d',d-d'}(\Z_p) \setminus \BB'} \mu_2(E_B) d\mu_1(B) + 
\int_{\BB'} \mu_2(E_B) d\mu_1(B) = 0.
\]
This completes the proof of the claim.
\end{proof}

\begin{claim}\label{claim_induction_2}
The map
\[
\begin{array}{cccc}
\theta: & M_{d',d-d'}(\Z_p) \times M_{d'',d'-d''}(\Z_p) &\to & M_{d',d-d'}(\Z_p) \times M_{d'',d'-d''}(\Z_p) \\
& \left( \begin{pmatrix} B_1 \\ B_2 \end{pmatrix}, C \right) & \mapsto & 
\left( \begin{pmatrix} B_1 \\ B_2-CB_1 \end{pmatrix}, C \right)
\end{array}
\]
is a homeomorphism preserving the measure.
\end{claim}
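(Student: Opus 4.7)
The plan is to treat the two assertions — that $\theta$ is a homeomorphism and that it preserves the measure — separately. For the topological part, the map $\theta$ is polynomial in the entries of $B_1, B_2$, and $C$, hence continuous. I would then exhibit the explicit inverse
\[
\theta^{-1}\!\left( \begin{pmatrix} B_1 \\ B_2' \end{pmatrix}, C \right) = \left( \begin{pmatrix} B_1 \\ B_2' + C B_1 \end{pmatrix}, C \right),
\]
which is again polynomial and therefore continuous, and verify directly that $\theta \circ \theta^{-1}$ and $\theta^{-1} \circ \theta$ are the identity. This immediately yields the homeomorphism claim.

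For the measure preservation, the plan is to reduce to translation-invariance of Haar measure via Fubini's theorem. The key observation is that $\theta$ fixes the $B_1$ and $C$ coordinates and, with $B_1$ and $C$ held constant, acts on the remaining coordinate $B_2 \in M_{d'', d-d'}(\Z_p)$ as the affine translation $B_2 \mapsto B_2 - C B_1$. Since the natural measure on $M_{d'', d-d'}(\Z_p)$ is the Haar measure of this compact $\Z_p$-module, it is invariant under such translations. Decomposing the product measure on $M_{d', d-d'}(\Z_p) \times M_{d'',d'-d''}(\Z_p)$ by singling out the $B_2$-factor (viewing $(B_1, C)$ as ``base'' and $B_2$ as ``fiber''), Fubini then identifies $\theta_*\mu$ with $\mu$ because the disintegration along the fibers is preserved sliceweise.

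There is essentially no substantive obstacle; the claim is a bookkeeping exercise combining Fubini with translation-invariance of Haar measure. The only genuine care needed is in choosing the Fubini decomposition so that the nontrivial shear term $-CB_1$ lives entirely within a single fiber coordinate, after which measure-preservation is visible. This packaging is what makes the subsequent change-of-variables argument in the proof of Lemma \ref{lem_induction} go through cleanly.
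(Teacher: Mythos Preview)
Your proof is correct and follows essentially the same strategy as the paper: exhibit the explicit inverse for the homeomorphism, then use Fubini plus an invariance property of Haar measure for the measure preservation. The only cosmetic difference is in the slicing: the paper fixes $C$ alone and observes that the resulting map $\theta_C\colon \begin{pmatrix}B_1\\B_2\end{pmatrix}\mapsto\begin{pmatrix}B_1\\B_2-CB_1\end{pmatrix}$ on all of $M_{d',d-d'}(\Z_p)$ is a $\Z_p$-linear automorphism (hence measure-preserving), whereas you fix both $(B_1,C)$ and reduce to a pure translation in the $B_2$-coordinate; either decomposition does the job.
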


\begin{proof}
The map $\theta$ is a homeomorphism since the map
\[
\left( \begin{pmatrix} B_1 \\ B_2 \end{pmatrix}, C \right) \mapsto
\left( \begin{pmatrix} B_1 \\ B_2+CB_1 \end{pmatrix}, C \right)
\]
is the inverse of $\theta$.

For $C \in M_{d'', d'-d''}(\Z_p)$, let $\theta_C: M_{d',d-d'}(\Z_p) \to  M_{d',d-d'}(\Z_p)$ be the map
\[
\theta_C \begin{pmatrix} B_1 \\ B_2 \end{pmatrix} = 
\begin{pmatrix} B_1 \\ B_2-CB_1 \end{pmatrix},
\]
in other words, $\theta(B, C) = (\theta_C(B), C)$.
Then $\theta_C$ is a $\Z_p$-isomorphism and in particular preserves the measure.
Now take any measurable subset $E$ of $M_{d',d-d'}(\Z_p) \times M_{d'',d'-d''}(\Z_p)$ and put
\begin{eqnarray*}
E_C &=& \{ B \in M_{d',d-d'}(\Z_p) \mid (B, C) \in E \} \\
\theta(E)_C &=& \{ B \in M_{d',d-d'}(\Z_p) \mid (B, C) \in \theta(E) \}.
\end{eqnarray*}
Let $\mu_1, \mu_2$, and $\mu = \mu_1 \otimes \mu_2$ be the measures of 
$M_{d',d-d'}(\Z_p), M_{d'',d'-d''}(\Z_p)$, and $M_{d',d-d'}(\Z_p) \times M_{d'',d'-d''}(\Z_p)$, respectively.
Since $\theta(E)_C = \theta_C(E_C)$, we have $\mu_1(\theta(E)_C) = \mu_1(E_C)$.
Then
\[
\mu(\theta(E)) = \int_{M_{d'',d'-d''}(\Z_p)} \mu_1(\theta(E)_C) d\mu_2(C)
 = \int_{M_{d'',d'-d''}(\Z_p)} \mu_1(E_C) d\mu_2(C)
 = \mu(E),
\]
which completes the proof of the claim.
\if0
Moreover, for any $(B_1, B_2, C)$ and a positive integer $n$, the restriction of $\theta$ gives the bijection
\begin{eqnarray*}
\left( \begin{pmatrix} B_1 \\ B_2 \end{pmatrix} + p^nM_{d',d-d'}(\Z_p)\right) \times 
\left(C + p^nM_{d'',d'-d''}(\Z_p)\right) \\
\overset{\theta}{\to} 
\left( \begin{pmatrix} B_1 \\ B_2-CB_1 \end{pmatrix} + p^nM_{d',d-d'}(\Z_p)\right) \times 
\left(C + p^nM_{d'',d'-d''}(\Z_p)\right),
\end{eqnarray*}
whose both sides have the same measure.
Hence the measure is preserved under $\theta$ for the open set of the form
\[
\left( \begin{pmatrix} B_1 \\ B_2 \end{pmatrix} + p^nM_{d',d-d'}(\Z_p)\right) \times 
\left(C + p^nM_{d'',d'-d''}(\Z_p)\right).
\]
Since any open subset is the union of countably many open subsets of the above form,
it follows that the measure is preserved under $\theta$ for any open sets.
Finally, the outer regularity shows that $\theta$ preserves the measure for any measurable sets.
This completes the proof of claim.
\fi
\end{proof}

\begin{claim}\label{claim_induction_3}
Let $X_1$ and $X_2$ be free $\Z_p$-modules of finite rank and put $X = X_1 \times X_2$.
We equip the natural measures on them.
If $A \subset X$ is weakly large,
then the image $\varpi(A) \subset X_1$ of $A$ under the projection $\varpi: X \to X_1$ is also weakly large.
\end{claim}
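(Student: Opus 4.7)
The plan is to unwind the definition of ``weakly large'' and reduce to a Fubini computation. Specifically, I need to show that for an arbitrary measurable subset $E \subset X_1 \setminus \varpi(A)$, we have $\mu_1(E) = 0$, where $\mu_1$ denotes the measure on $X_1$.

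First, I would form the preimage $\varpi^{-1}(E) = E \times X_2 \subset X$. This set is measurable in $X$ because $E$ is measurable in $X_1$ (and products of measurable sets are measurable with respect to the product measure). The key observation is that $\varpi^{-1}(E) \cap A = \emptyset$: indeed, any point $(x_1, x_2) \in A$ satisfies $x_1 = \varpi(x_1, x_2) \in \varpi(A)$, so $x_1 \notin E$. Hence $\varpi^{-1}(E) \subset X \setminus A$.

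Next, since $A$ is weakly large in $X$, every measurable subset of $X \setminus A$ has $\mu$-measure zero; applying this to the measurable set $\varpi^{-1}(E)$ gives $\mu(\varpi^{-1}(E)) = 0$. By the Fubini-type identity for the product measure $\mu = \mu_1 \otimes \mu_2$, we have
\[
0 = \mu(E \times X_2) = \mu_1(E) \cdot \mu_2(X_2).
\]
Since $X_2 \simeq \Z_p^{d_2}$ for some $d_2$ and its natural measure is a normalized Haar measure with $\mu_2(X_2) = 1 > 0$, we conclude $\mu_1(E) = 0$, as desired.

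There is no real obstacle: the only subtle point is that $A$ itself need not be measurable, but this is precisely why the definition of ``weakly large'' is phrased via arbitrary measurable subsets of the complement, and the argument above only uses measurability of $E$ and of $\varpi^{-1}(E)$. So the whole proof reduces to the two observations ``$\varpi^{-1}(E) \cap A = \emptyset$'' and ``Fubini plus $\mu_2(X_2) > 0$''.
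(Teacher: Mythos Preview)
Your proof is correct and follows essentially the same approach as the paper: both take a measurable $E \subset X_1 \setminus \varpi(A)$, observe that $\varpi^{-1}(E) = E \times X_2$ is a measurable subset of $X \setminus A$ hence has measure zero, and then use Fubini together with $\mu_2(X_2) \neq 0$ to conclude $\mu_1(E) = 0$. Your remark about $A$ not needing to be measurable is exactly the point the paper highlights in its discussion of why the ``weakly'' formulation is used.
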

\begin{proof}
Let $\mu_1, \mu_2$, and $\mu = \mu_1 \otimes \mu_2$ be the measures of $X_1, X_2$, and $X$, respectively.
Let $E_1$ be any measurable subset of $X_1 \setminus \varpi(A)$.
Then $\varpi^{-1}(E_1)$ is a measurable subset of $X \setminus A$ and therefore $\mu(\varpi^{-1}(E_1)) = 0$ since $A \subset X$ is weakly large.
It is clear that, for any $x_2 \in X_2$, 
\[
\{ x_1 \in X_1 \mid (x_1, x_2) \in \varpi^{-1}(E_1) \} = E_1.
\]
Therefore 
\[
0 = \mu(\varpi^{-1}(E_1)) = \int_{X_2} \mu_1(E_1) d\mu_2 = \mu_1(E_1) \mu_2(X_2).
\]
Since $\mu_2(X_2)$ is nonzero, we have $\mu_1(E_1) = 0$, as claimed.
\end{proof}

Now we finish the proof of Lemma \ref{lem_induction}.
What we need to show is that 
\[
\{ (B_2-CB_1, C) \in M_{d'',d-d''}(\Z_p) \mid B \in \BB, C \in \CC_B \} \subset M_{d'',d-d''}(\Z_p)
\]
is weakly large.
But this set is the image of
\[
\{ (B, C) \in M_{d',d-d'}(\Z_p) \times M_{d'',d'-d''}(\Z_p) \mid B \in \BB, C \in \CC_B \},
\]
which is weakly large by Claim \ref{claim_induction_1}, under the composition of the maps
\[
\begin{array}{ccccc}
M_{d',d-d'}(\Z_p) \times M_{d'',d'-d''}(\Z_p) &\overset{\theta}{\to}& M_{d',d-d'}(\Z_p) \times M_{d'',d'-d''}(\Z_p)
& \to & M_{d'',d-d''}(\Z_p),\\
&&\left(\begin{pmatrix} B_1 \\ B_2 \end{pmatrix}, C \right) & \mapsto & (B_2, C)
\end{array}
\]
which preserves the weak largeness by Claims \ref{claim_induction_2} and \ref{claim_induction_3}.
This completes the proof of Lemma \ref{lem_induction}.
\end{proof}

\begin{rem}\label{rem_weakly}
The reason why we introduced the notion ``weakly almost all'' is to ensure Lemma \ref{lem_induction}.
Indeed, Claims \ref{claim_induction_1} and \ref{claim_induction_3} may fail if we omit the term ``weakly.''
The troubles lie in the possible failure of the measurabilities of the concerned set of Claim \ref{claim_induction_1} and the set $\varpi(A)$ of Claim \ref{claim_induction_3}.
\end{rem}

{\small
\bibliographystyle{alpha}
\bibliography{ggc_ref}
}

\end{document}